\newcommand\reallywidehat[1]{%
\savestack{\tmpbox}{\stretchto{%
  \scaleto{%
    \scalerel*[\widthof{\ensuremath{#1}}]{\kern.1pt\mathchar"0362\kern.1pt}%
    {\rule{0ex}{\textheight}}
  }{\textheight}%
}{2.4ex}}%
\stackon[-6.9pt]{#1}{\tmpbox}%
}
\newcommand{\tr}{\operatorname{tr}}
\newcommand{\bb}{\mathbb}
\newcommand{\C}{\bb C}
\newcommand{\SV}{\operatorname{SV}}
\newcommand{\Z}{\bb Z}
\newcommand{\R}{\bb R}
\newcommand{\abs}[1]{\left|{#1}\right|}
\newcommand{\one}{\mathbf{1}}
\newcommand{\re}{\operatorname{Re}}
\newcommand{\im}{\operatorname{Im}}
\newcommand{\T}{\mathcal T}
\newcommand{\hh}{\mathcal H}
\newcommand{\om}{\omega}
\newcommand{\La}{\Lambda}
\newcommand{\SC}{\operatorname{SC}}
\newcommand{\minuszero}{\backslash\{0\}}
\newcommand{\norm}[1]{\left\lVert{#1}\right\rVert}
\newtheorem{Theorem}{Theorem}
\newtheorem{Cor}[Theorem]{Corollary}
\newtheorem{Prop}[Theorem]{Proposition}
\newtheorem{lemma}[Theorem]{Lemma}
\newtheorem{rmk}[Theorem]{Remark}
\numberwithin{equation}{section}
\numberwithin{Theorem}{section}
\begin{document}

\title{Counting pairs of saddle connections}
\author{J.~S.~Athreya}
\author{S.~Fairchild}
\author{H.~Masur}
\begin{abstract}
	We show that for almost every translation surface the number of pairs of saddle connections with bounded magnitude of the cross product has asymptotic growth like $c R^2$ where the constant $c$ depends only on the area and the connected component of the stratum. The proof techniques combine classical results for counting saddle connections with the crucial result that the Siegel--Veech transform is in $L^2$.  In order to capture information about pairs of saddle connections, we consider pairs with bounded magnitude of the cross product since the set of such pairs can be approximated by a fibered set which is equivariant under geodesic flow. In the case of lattice surfaces, small bounded magnitude of the cross product is equivalent to counting parallel pairs of saddle connections, which also have a quadratic growth of $c R^2$ where $c$ depends in this case on the given  lattice surface.
\end{abstract}
\maketitle

\section{Introduction}

\paragraph*{\bf Translation surfaces and saddle connections} A translation surface $(X, \omega)$ is a pair consisting of a compact Riemann surface $X$ and $\omega$, a non-zero holomorphic one-form. For succinctness we denote a translation surface by $\omega$ where the underlying Riemann surface is understood. A \emph{saddle connection} on $\omega$ is an oriented geodesic in the flat metric determined by $\omega$ connecting two zeros of $\omega$ with no zeros in its interior. Let $\SC_{\omega}$ be the set of saddle connections on $\omega$. For $\gamma \in \SC_{\om}$, the associated \emph{holonomy vector} is given by $$z_{\gamma} = \int_{\gamma} \om \in \C.$$ Let $$\La_{\om} = \{z_{\gamma}: \gamma \in \SC_{\om}\}$$ denote the set of holonomy vectors of saddle connections on $\om$. This is a countable discrete subset of the plane $\C$. The \emph{length} $\ell(\gamma)$ of a saddle connection $\gamma$ is  $$\ell(\gamma) = |z_{\gamma}|.$$ For $R>0$, let $\La_\om (R) =  \La_{\om} \cap B(0, R)$ be the collection of holonomy vectors of saddle connections with length at most $R$.  

\paragraph*{\bf Pairs of saddle connections} We are interested in the distribution of \emph{pairs} of saddle connections, in particular the growth rate of the count of pairs of bounded \emph{virtual area}. The virtual area of a pair of saddle connections $\gamma, \eta$ is the given by the area $|z \wedge w|$ of the parallelogram spanned by the associated holonomy vectors $z= z_{\gamma}, w=  z_{\eta}$, where we recall that if $z= x+iy, w =u+iv \in \C$ the area $|z \wedge w|$ is given by $$|z \wedge w |= |xv - yu| = |\mbox{Im}(\bar{z} w)|.$$
We use the term \emph{virtual area} since the saddle connections whose holonomy vectors we are considering do not necessarily form an embedded parallelogram on the surface $\omega$.
Fix $A > 0$, and define the counting function $N_A(\om, R)$ to be the number of pairs of saddle connections of length at most $R$ and whose virtual area is at most $A$
 $$N_{A}(\omega, R) =\#\{(z, w) \in \La_{\om}(R)^2:  |z \wedge w| \le A, |w|\leq |z|\}.$$ 

\paragraph*{\bf Moduli spaces}   The moduli space $\Omega_g$ of compact genus $g$ area $1$ translation surfaces (where $(X_1, \omega_1) \sim (X_2, \omega_2)$ if there is a biholomorphism $f: X_1 \rightarrow X_2$ with $f_* \omega_2 = \omega_1$) is stratified by integer partitions of $2g-2$ (fixing the orders of the zeros of $\omega$).  The area of a surface $\om$ is given by $$\mbox{Area}(\omega) = \frac{i}{2} \int_{X} \omega \wedge \bar{\omega}.$$These strata have at most $3$ connected components~\cite{KontsevichZorich}, and each connected component $\hh$ carries a natural Lebesgue probability measure $\mu = \mu_{\hh}$~\cite{Masur82, Veech82, MasurSmillie91}. We fix $\hh$ to be a connected component of a stratum. 
   
\paragraph*{\bf Main theorem}   Our main result, motivated by considerations in geometric probability (see \S\ref{sec:poisson}) is an almost sure asymptotic growth result for the set of pairs of saddle connections with bounded virtual area.

\begin{Theorem}\label{theorem:virtual:ae} There is a constant $c_A = c_A(\mu)$ such that for $\mu$-almost every $\omega \in \hh$ $$\lim_{R \rightarrow \infty} \frac{N_A(\omega, R)}{R^2} = c_A.$$

\end{Theorem}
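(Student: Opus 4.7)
My plan is to adapt the Eskin--Masur framework for counting saddle connections to the setting of pairs, using two structural features of the problem: the virtual area $|z\wedge w|$ is $\SL(2,\R)$-invariant, so the constraint $|z\wedge w|\le A$ is preserved both by the geodesic flow $g_t=\mathrm{diag}(e^t,e^{-t})$ and by rotations, while the length constraints $|w|\le|z|\le R$ are rotation-invariant but transform equivariantly under $g_t$. This dichotomy is what makes the counting region amenable to an ergodic-theoretic attack.

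First I would construct a family of ``fibered'' sets $\mathcal B_T(A)\subset\C^2$ that is equivariant under $g_t$, so that the associated pair transform
\[
  \widehat\chi_{\mathcal B_T(A)}(\omega) \;=\; \#\{(z,w)\in\La_\om^2 : |w|\le|z|,\ (z,w)\in\mathcal B_T(A)\}
\]
can be rewritten (modulo boundary error) as the value at $g_{-\log T}\omega$ of a fixed transform. Averaging over $K=\SO(2)$ rotations of $\mathcal B_T(A)$ should recover, up to lower-order error, the rotation-invariant region $\{(z,w):|z|\le T,\,|w|\le|z|,\,|z\wedge w|\le A\}$ whose pair count is exactly $N_A(\omega,T)$. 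I would then establish a pair Siegel--Veech formula giving the mean $\int_\hh \widehat\chi_{\mathcal B_T(A)}\,d\mu \sim c_A T^2$ by integrating the pair transform against planar Lebesgue on $\C^2$, using the classical Siegel--Veech formula fibered over the choice of $z$.

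To upgrade the mean statement to a pointwise one I would apply a pointwise ergodic theorem for $\SL(2,\R)$ (\`a la Nevo) to the fixed transform $\widehat\chi_{\mathcal B_1(A)}$; this requires $L^2$-integrability, which is precisely the $L^2$-Siegel--Veech input highlighted in the abstract, applied here to the pair version. Combining the ergodic limit along the $g_t$-orbit with a rotation-averaging sandwich argument (squeezing the rotation-invariant defining region of $N_A$ between unions of slightly larger and smaller $r_\theta \mathcal B_T(A)$'s) then yields $N_A(\omega,R)/R^2\to c_A$ for $\mu$-almost every $\omega$.

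The main obstacle is verifying the $L^2$ bound for the pair transform $\widehat\chi_{\mathcal B_1(A)}$. For each $z$ the $w$-slice of $\mathcal B_1(A)$ is a thin strip of width $\sim A/|z|$, so the squared transform is a sum over quadruples $(z_1,w_1,z_2,w_2)$ of saddle connections subject to delicate alignment constraints. Bounding this sum — either by a quadruple Siegel--Veech-type computation, or by a reduction that uses Fubini on the strips together with the $\SL(2,\R)$-invariance of area to pull the estimate back to the known single-saddle $L^2$-Siegel--Veech bound — is the technical heart of the argument. The secondary obstacle is ensuring that the boundary error in the rotation-averaged fibered approximation is genuinely lower-order \emph{almost-surely} and not merely in mean, which will require a quantitative ergodic estimate of Borel--Cantelli type from the same $L^2$ control.
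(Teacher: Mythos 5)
Your outline captures the paper's high-level strategy accurately: construct a $g_t$-equivariant fibered set in $\C^2$ (the paper's $R_A(\mathcal T)$), exploit the $\SL(2,\R)$-invariance of $|z\wedge w|$, run circle averages to recover the rotationally symmetric counting region, apply Nevo's ergodic theorem to the Siegel--Veech transform, and appeal to the higher-integrability result of Athreya--Cheung--Masur for the required $L^{1+\kappa}$ bound. That last point, incidentally, is not a new obstacle: the paper cites it directly rather than reproving it, and your ``thin strip of width $\sim A/|z|$'' analysis is not needed.

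Two genuine gaps remain, however. First, the pair Siegel--Veech formula is \emph{not} obtained by ``integrating against planar Lebesgue on $\C^2$, fibered over the choice of $z$.'' The $\SL(2,\R)$-invariant measure on $\C^2$ is not Lebesgue: the nontrivial orbits are the determinant-loci $D_t=\{(z,w):\det(z|w)=t\}$ and the parallel loci $L_s=\{(z,sz)\}$, and the Siegel--Veech measure is a mixture of Haar measure on the $D_t$ and Lebesgue on the $L_s$, weighted by transverse measures $\nu$ and $\rho$ (Equation (3.2), from \cite{AthreyaCheungMasur19}). Your proposed formula would produce the wrong functional. Second, and more seriously, the control of the boundary/approximation error is not a one-line Borel--Cantelli appeal. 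Proving that the discrepancy $|N_A^*(\omega,e^t)-\pi e^{2t}(A_t\widehat{h_A})(\omega)|=o(e^{2t})$ \emph{pointwise almost surely} occupies the majority of the paper (Sections 5 and 6): it requires an explicit decomposition into a main term plus four boundary error terms $E_t^1,\dots,E_t^4$; almost-sure quadratic upper bounds (Proposition 4.2) proved by partitioning the circle into $\sim e^{2t}$ arcs and counting on the finitely many surfaces $g_tr_{\theta_i}\omega$; a three-way decomposition of the thin part of the stratum ($F(j),G(j),H(j)$) governed by the systole and a second non-homologous short saddle connection together with Masur--Smillie measure estimates; and a separate measure bound in the thick part (Lemma 6.7) for surfaces carrying a pair of saddle connections lying near $\partial R_A(\mathcal T)$, proved via Delaunay-triangulation period coordinates. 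A quantitative Nevo theorem with a rate (as in Nevo--R\"uhr--Weiss) could conceivably feed a Borel--Cantelli argument, but the paper does not take that route, and without a rate your sketch does not close. You also omit the final geometric-series step that passes from the annulus count $N_A^*(\omega,R)$ to the ball count $N_A(\omega,R)$, which requires the quadratic upper bound of Proposition 4.2 to dominate the tail.
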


\subsection{History and prior results} The study of counting problems for saddle connections is very active, and connected to many different areas of mathematics, from low-dimensional dynamical systems to algebraic geometry. Motivated by problems in counting special trajectories for billiards in rational polygons, Masur~\cite{Masur90QuadraticGrowth} proved that the counting function $$N(\omega, R) = \#\La_{\om}(R) $$ has quadratic upper and lower bounds for all $\omega$, that is, there are $0 < c_1= c_1(\omega) < c_2 = c_2(\omega)$ so that for all $R$, 
\begin{equation}\label{eq:quadbounds} 
c_1 R^2  \le N(\omega, R) \le c_2 R^2 .
\end{equation}
Subsequently, Veech~\cite{Veech98} showed there is a constant $c = c(\hh)$ such that $$\lim_{R \rightarrow \infty} \int_{\hh} \left| \frac{N(\omega, R)}{R^2} - c\right|  \,d\mu(\omega)= 0,$$ an $L^1$-quadratic asymptotic result. Inspired by Veech's approach, Eskin-Masur~\cite{EskinMasur01} adapted ideas from homogeneous dynamics (specifically, the work of Eskin-Margulis-Mozes~\cite{EMM95, EMM98} on quantitative versions of Oppenheim's conjecture) and an ergodic theorem of Nevo~\cite{nevo2017equidistribution} to improve this to a pointwise asymptotic result, showing that for $\mu$-almost every $\omega \in \hh$, 
\begin{equation} \label{eq:EM01}
\lim_{R \rightarrow \infty} \frac{N(\omega, R)}{R^2} = c.
\end{equation} 

\paragraph*{\bf Error terms} More recently, Nevo-R\"uhr-Weiss~\cite{NRW20}, using error term estimates in Nevo's ergodic theorem coming from mixing properties of the \emph{Teichm\"uller geodesic flow} $g_t$ (defined in \S~\ref{sec:strategy}), showed that there is an $\alpha < 2$ such that for almost every $\omega \in \hh$, $$N(\omega, R) = c  R^2 + o(R^{\alpha}).$$    

 \subsection{The Siegel--Veech transform}\label{sec:siegelveech} A crucial ingredient in the work of Veech~\cite{Veech98} is the \emph{Siegel--Veech transform}. Let $B_c(X)$ be the space of bounded measurable functions with compact support on a space $X$.  For $f \in B_c(\C),$ we define a function $\widehat{f}$ on $\hh$ by $$\widehat{f}(\omega) = \sum_{z \in \La_{\om}} f(z).$$ For example, if $f = \one_{B(0, R)}$ is the indicator function of $B(0, R)$, $$\widehat{f}(\omega) = N(\omega, R).$$ 
 
\paragraph*{\bf The Siegel--Veech formula} A beautiful result of Veech~\cite{Veech98} is the \emph{Siegel--Veech formula}, which states that there is a $c = c_{SV}$ so that if $f \in B_c(\R)$ then $\widehat{f} \in L^{1}(\hh, \mu)$ and 
 \begin{equation}\label{eq:SV}
 	\int_{\hh} \widehat{f} d\mu = c \int_{\C} f(z)dz.
 \end{equation} 
In fact a crucial ingredient in Eskin--Masur's asymptotic result is that $\widehat{f} \in L^{1+\beta}$ for some $\beta >0$. We will need similar results for a generalized Siegel--Veech transform. Given $h  \in B_c(\C^2)$, we define a function $\widehat{h}$ on $\hh$ by 
$$\widehat{h}(\omega) = \sum_{z_1, z_2 \in \La_{\om}} h(z_1, z_2).$$ For example, if $h = \one_{D_A(R)}$ is the indicator function of the set $$D_A(R) = \{(z, w) \in \C^2:  |w| \leq |z| \le R,  |z \wedge w| \leq A\},$$  then $$\widehat{h}(\omega) = N_A(\omega, R).$$ 

\paragraph*{\bf Higher integrability} In our proof of Theorem~\ref{theorem:virtual:ae}, we rely on a result of Athreya-Cheung-Masur~\cite{AthreyaCheungMasur19} which shows that $\widehat{h} \in L^{1+\beta}$ for $h \in B_c(\C^2)$ (which is equivalent to showing that if $f \in B_c(\C)$ then  $\widehat{f} \in L^{2+\beta}(\hh)$ \cite[\S 1.4.1]{AthreyaCheungMasur19}).

\subsection{The $SL(2, \R)$-action on strata.}\label{sec:sl2:strata} There is an action of the group $SL(2, \R)$ on strata. A translation surface $\omega$ gives an atlas of charts from $X\backslash \{ \omega^{-1}(0)\}$ to $\C$ whose transition maps are translations: the atlas around a point $p_0$ is given by $$z(p) = \int_{p_0}^p \omega.$$ In these coordinates, $\omega = dz$. Equivalently, such an atlas of charts determines a pair $(X, \omega)$. The group $GL^+(2, \mathbb R)$ acts by $\R$-linear-postcomposition with charts, and the group $SL(2, \R)$ preserves the set of surfaces with area $1$. The measure $\mu_{\hh}$  is ergodic and invariant under the $SL(2, \R)$-action, and is locally given by Lebesgue measure in appropriate coordinates on $\hh$. Note that the assignment \begin{equation}\label{eq:assignment}\omega \mapsto \La_{\omega}\end{equation} is $SL(2, \R)$-equivariant, that is $$\La_{g\omega} = g\La_{\omega}.$$

\subsection{Random subsets of $\C$}\label{sec:context} Selecting $\om$ at random according to $\mu$, the assignment (\ref{eq:assignment}) gives a notion of a random discrete subset of $\C$. The equivariance means that this choice of random discrete subset is $SL(2,\R)$-invariant. More formally, using the assignment (\ref{eq:assignment}) allows us to push the measure $\mu$ forward to a measure on the space $\mathcal D(\C)$ of discrete subsets of $\C$, and the equivariance implies that this measure is $SL(2, \R)$-invariant where the action on $\mathcal D(\C)$ is induced by the action on $\C$. This perspective was originated by Veech~\cite{Veech98}.

\subsubsection{Poisson processes}\label{sec:poisson} Perhaps the most famous example of a (family of) $SL(2,\R)$- probability invariant measures on  $\mathcal D(\C)$ are \emph{planar Poisson processes}, see, for example, Doob~\cite[\S 8.5]{Doob} for an introduction.  Fix a positive real parameter $\rho$ (known as the \emph{intensity}). The planar Poisson process of intensity $\rho$ is a probability measure $\mathbb P_{\rho}$ on $\mathcal D(\C)$ characterized by the following property~\cite{Moyal}: given any finite collection of finite measure Borel subsets $A_1, \ldots A_k \subset \C$ with $a_i = m(A_i) < \infty$, where $m$ denotes Lebesgue measure on $\C$, the random variables $X_i$ recording the number of points in $A_i$ are independent Poisson random variables with mean $\rho a_i$, $i=1, \ldots, k$. That is, $$\mathbb P_{\rho} \left(\La \in \mathcal D(\C): X_i (\La) = \#(\La \cap A_i) = n\right)) = e^{-\rho a_i} \frac{(\rho a_i)^n}{n!}.$$ In particular, these random variables have all moments (that is, $E(X_i^p) = \int_{\mathcal D(\C)} X_i(\La) d\mathbb P_{\rho}(\La) < \infty$ for all $p >0$).

\paragraph*{\bf Comparison to Poisson processes} Much of the study of the geometry and statistics of the sets $\La_{\om}$ has been in some sense motivated by the comparison of these statistics to those of Poisson processes. For example, the Siegel--Veech formula (\ref{eq:SV}) shows that, at least at the level of the mean, the random sets $\La_{\om}$ behave like planar Poisson processes of intensity $c_{SV}$. In terms of counting \eqref{eq:EM01} also gives evidence that $\La_{\om}$ behaves like planar Poisson processes as follows. One can show that with probability $1$ (see, for example~\cite[Lemma 2.1]{Miles}) that the number of points of a planar Poisson process of intensity $\rho$ in $B(0,R)$ grows like $\rho \pi R^2$ as $R \rightarrow \infty$, that is, letting $N(\La, R) = \# \La \cap B(0, R)$ for $\La \in \mathcal D(\C)$, then for $\mathbb P_{\rho}$-almost every $\La \in \mathcal D(\C)$, $$\lim_{R \rightarrow \infty} \frac{N(\La, R)}{\pi R^2} = \rho.$$ 

\paragraph*{\bf Variance and higher moments} The $L^2$-estimates of~\cite{AthreyaCheungMasur19} can be viewed as showing that the random variables corresponding to counting the intersection of finite measure Borel sets with $\La_{\om}$ have finite \emph{variance}, though as observed there, they do not have finite third moments, marking a difference from Poisson processes. 

\paragraph*{\bf Counting pairs for Poisson processes} If one considers our problem for planar Poisson processes of intensity $\rho$, it is an exercise in geometric probability to show that with $\mathbb P_{\rho}$-probability $1$, that $$N_A(\La, R) = \#\{(z, w) \in \La \times \La: |w| \le |z| \le R, |z \wedge w| \le A\},$$ will grow like the $\rho^2$ times the volume of the region $\{(z, w) \in \C^2: |w| \le |z| \le R, |z \wedge w| \le A\},$ which a direct computation shows grows like a constant (depending linearly on $A$) times $R^2$. So at the level of these asymptotics, there is a strong resemblance between the set of saddle connections of a random translation surface, and a planar Poisson process.

\paragraph*{\bf Directional statistics} There has also been a wide range of recent work, starting with~\cite{AChaika} and further developed in~\cite{FChaika} on understanding the fine-scale distribution of gaps between \emph{directions} of saddle connections and comparing them to that of Poisson processes. The current work can be seen as a way of connecting counting problems and the idea of distribution of directions: if a pair of saddle connections are long and have bounded virtual area, their directions are quite close to each other.

\subsubsection{The constants $c_A$}\label{sec:cA} Motivated by the comparison to Poisson processes, it is an interesting question to understand the constants $c_A$ as as a function of $A$, which would be linear in $A$ if it mirrors the Poisson process. By definition, $c_A$ is montonically nondecreasing in $A$. It seems reasonable to conjecture that in fact $c_A$ should go to $\infty$ as $A\to\infty$. We are able to show the simple result pointed out to the authors by Alex Wright that at least $c_A >0$.

\begin{Cor}[Corollary of quadratic growth]\label{cor:cApositive}
For $A\geq 1$  we have that $c_A>0$.
\end{Cor}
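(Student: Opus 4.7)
The plan is to derive $c_A > 0$ directly from Theorem~\ref{theorem:virtual:ae} combined with Masur's pointwise quadratic lower bound \eqref{eq:quadbounds}, by establishing a simple deterministic inequality $N_A(\om, R) \geq N(\om, R)$ valid for every $\om \in \hh$ and every $R > 0$.

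The key observation is that the cross-product constraint is trivially satisfied by parallel holonomy vectors. For each $z \in \La_{\om}(R)$, the diagonal pair $(z, z) \in \La_{\om}(R)^2$ satisfies $|z \wedge z| = 0 \leq A$ and $|z| \leq |z|$, so it is counted in $N_A(\om, R)$. The map $z \mapsto (z,z)$ thus injects $\La_{\om}(R)$ into the pairs enumerated by $N_A(\om, R)$, yielding the pointwise bound $N_A(\om, R) \geq N(\om, R)$. (Equivalently, since $\La_{\om}$ is symmetric under $z \mapsto -z$ because saddle connections come oriented, one could use the antidiagonal pairs $(z, -z)$.)

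From here the rest is immediate. Masur's bound \eqref{eq:quadbounds} provides, for every $\om \in \hh$, a constant $c_1(\om) > 0$ with
\[
\liminf_{R \to \infty} \frac{N_A(\om, R)}{R^2} \;\geq\; \liminf_{R \to \infty} \frac{N(\om, R)}{R^2} \;\geq\; c_1(\om) \;>\; 0.
\]
Applying Theorem~\ref{theorem:virtual:ae} at any $\mu$-generic $\om$, where the $\liminf$ is in fact the honest limit $c_A$, forces $c_A \geq c_1(\om) > 0$.

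There is no serious obstacle here; the entire content is the observation that $|z \wedge w| = 0$ for parallel vectors, which is consistent with the attribution of the remark to Alex Wright. The hypothesis $A \geq 1$ plays no essential role in the argument—any $A \geq 0$ would suffice—but is presumably included to match the normalization used elsewhere. The only point requiring attention is that the definition of $N_A(\om,R)$ via $\La_{\om}(R)^2$ places no distinctness requirement on $z$ and $w$, so the degenerate pairs $(z,z)$ are genuinely admissible; if one preferred to avoid them entirely, the antidiagonal variant $(z,-z)$ provides an equally valid substitute.
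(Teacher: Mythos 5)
Your argument is correct given the literal definition of $N_A(\omega,R)$: the set $\La_{\om}(R)^2$ is a Cartesian product and does include the diagonal, and $(z,z)$ (or $(z,-z)$, using that $\La_\om$ is symmetric under negation because saddle connections carry an orientation) satisfies $|z\wedge w|=0\le A$ and $|w|\le|z|$, yielding the pointwise bound $N_A(\om,R)\ge N(\om,R)\ge c_1(\om)R^2$ and hence $c_A>0$ at any $\mu$-generic point. You also correctly observe that this works for every $A\ge 0$, not just $A\ge 1$.

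However, this is a genuinely different route from the paper's, and the difference is worth flagging. The paper's proof produces, for each cylinder of circumference at most $R$, a pair $(\beta,\gamma)$ consisting of a boundary saddle connection and a saddle connection crossing the cylinder; these have $|z_\beta\wedge z_\gamma|\le 1$ because the parallelogram they span has area bounded by the area of the cylinder, which is at most the area of the surface. These pairs are \emph{non-parallel} (a crossing saddle connection cannot be parallel to a boundary one), so the paper's argument remains valid under the more restrictive reading in which one excludes parallel or degenerate pairs — which is clearly the spirit of the problem, given that the paper remarks that a Masur--Smillie--Veech--generic surface has no parallel pairs of holonomy vectors and treats parallel pairs as a separate, special phenomenon confined to lattice surfaces (Proposition~\ref{theorem:parallel}). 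The restriction $A\ge 1$ is exactly what the cylinder bound $|z_\beta\wedge z_\gamma|\le 1$ requires. So: your argument shows $c_A>0$ follows trivially from Masur's lower bound once diagonal or antidiagonal pairs are admitted, while the cylinder argument establishes the more meaningful fact that there are quadratically many \emph{genuinely non-parallel} low-virtual-area pairs. If you want a version of your observation that survives excluding parallel pairs, it does not: you would then need something like the cylinder construction, and the hypothesis $A\ge 1$ becomes essential.
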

\begin{proof}
Given $\omega$ there  are constants $c>0$ and  $R_0$ so that for $R\geq R_0$ the number of cylinders on $\omega$ with circumference at most $R$ is at least $cR^2$ \cite{Ma3}.
Let $\beta$ be a saddle connection on the boundary of the cylinder, and let $\gamma$ be a saddle connection that crosses the cylinder. Since we are working with unit area surfaces, the area of the cylinder is at most $1$, and so we have $|z_{\beta}\wedge z_{\gamma}|\leq 1$. This gives a pair of saddle connections of length at most $R$ and so $$N_A(\omega,R)\geq cR^2.$$
This proves that $c_A\geq c>0$. 
\end{proof}

\paragraph*{\bf Computing $c_A$} An interesting question is whether $c_A$ is computable. In the case of a single saddle connection, work of Eskin--Masur--Zorich \cite{EMZ} shows that the Siegel--Veech constant $c(\hh)$ can be computed in terms of volumes of strata of translation surfaces. It is still an open question whether an analogous result is true for the current problem. Indeed in~\cite{Veech98}, the fact that the only nonatomic ergodic $SL(2,\R)$ invariant measure on $\C$ is a multiple of Lebesgue measure is used in a crucial way to prove the Siegel--Veech formula (\ref{eq:SV}). What makes the computation of $c_A$ in our setting interesting and difficult is that we do not yet explicitly know how to compute integrals of Siegel--Veech transforms of functions on $\C^2$, see \S\ref{sec:svmeasures} for more details.

\subsubsection{Other measures}\label{sec:othermeasure}
 Theorem~\ref{theorem:virtual:ae} does not immediately generalize to almost-everywhere results for arbitrary ergodic $SL(2,\R)$ invariant measures $\nu$. In order to guarantee first that $N_A(\omega, R)$ is integrable, we need to know that the function counting individual saddle connections is in $L^2(\nu)$ (i.e., the counting function has finite variance), which was only known for the measure $\mu$ discussed above (and some other families of measures) by \cite{AthreyaCheungMasur19}. 

\subsection{Strategy of proof}\label{sec:strategy} 
Our approach will expand on ideas from Eskin--Masur~\cite{EskinMasur01} in significant ways.  We set up our counting problem as an integral of a Siegel--Veech transform over a piece of an $SL(2, \mathbb R)$-orbit on $\hh$, and then apply the Nevo ergodic theorem~\cite[Theorem 1.1]{nevo2017equidistribution}. Since we are counting pairs of saddle connections, the construction of the function whose Siegel--Veech transform we integrate is considerably more intricate, requiring a carefully chosen domain in $\C^2$ and some technical, but elementary arguments given in Section~\ref{sec:estimates}. In addition we also need measure approximations for subsets of the stratum, which also require new ideas beyond the estimates of~\cite{AthreyaCheungMasur19}. In particular in order to get bounds on counting pairs in the thin part of the stratum $\hh$, we define three families of sets which exhaust  the thin part of the stratum depending on the shortest saddle connection and its relation to another non-homologous saddle connection (Section~\ref{sec:proof4.2}).   We also need to develop measure bounds for pairs in the thick part satisfying certain closeness conditions (Section~\ref{sec:subquadraticthick}).

\paragraph*{\bf Renormalization} We now outline in more detail the strategy of proof of Theorem~\ref{theorem:virtual:ae}, emphasizing the similarities to previous work. At the heart of the argument is that a saddle connection of holonomy $r e^{\pi/2-i\theta}$ on $\omega$ becomes a saddle connection of holonomy $re^{-t} i$ on $g_tr_\theta \omega$, so if the length $r$ of the original saddle connection has $|r| < e^t$, the corresponding holonomy on $g_t r_{\theta} \om$ is vertical and has length at most $1$. We recall the strategy of Eskin--Masur~\cite{EskinMasur01} for understanding the counting function $N(\omega, R)$: they construct a function $f \in B_c(\C)$ (essentially the indicator function of a trapezoid, see Figure~\ref{fig:arcs}), which satisfies $$\frac{1}{2\pi} \int_0^{2\pi} f(g_t r_{\theta} z) \,d\theta\approx e^{-2t} \one_{A\left(\frac{e^t}{2}, e^t\right)}(z),$$ where the matrices
\begin{equation}\label{eq:gtrtheta}
g_t = \begin{pmatrix} e^t & 0 \\ 0 & e^{-t}\end{pmatrix} \quad r_\theta = \begin{pmatrix}
	\cos \theta & -\sin \theta \\ \sin \theta & \cos\theta
\end{pmatrix}
\end{equation}
 act $\R$-linearly on $\C$, and for $0 < R_1 < R_2$, $$A(R_1, R_2) = \{ z \in \C: R_1 < |z| < R_2\}.$$ Putting $e^t = R$, the Siegel--Veech transform $\hat{f}(g_tr_\theta\omega) = \sum_{z\in \Lambda_\omega} f(g_tr_\theta z)$ adds the above expression over all $z \in \La_{\omega}$, and transforms the integration from over $\C$ to integrating over the stratum to obtain $$\frac{1}{2\pi} \int_{0}^{2\pi} \widehat{f}(g_t r_{\theta} \omega) \, d\theta \approx \frac{1}{R^2} \left( N(\omega, R) - N(\omega, R/2) \right) .$$ This reduces the counting problem to a problem of understanding the sequence of integrals $$\frac{1}{2\pi} \int_{0}^{2\pi} \widehat{f}(g_t r_{\theta} \omega) \, d\theta.$$ 
 Nevo's ergodic theorem (Theorem~\ref{thm:Nevo}) deals precisely with integrals of this form, but with some compactness and smoothness assumptions on the integrand. Theorem~\ref{thm:Nevo} gives that almost surely the integrals converge to $\int\widehat{f}d\mu$. The Siegel--Veech formula is then applied to say that this last integral is $c \int_{\mathbb C} f(z) dz$.  
 
 
\subsubsection{Pairs} In our case we will construct as the main part of the proof,  a function $h_A \in B_c(\C^2)$ so that 
\begin{equation}\label{eq:circle} \frac{1}{2\pi} \int_0^{2\pi} h_A(g_t r_{\theta} (z, w)) \approx \frac{e^{-2t}}{\pi} \one_{D_A(e^t/2, e^{t})}(z, w),\end{equation} where for $R_1 < R_2$, $$D_A(R_1, R_2) = \{(z, w) \in \C^2: |z \wedge w| \le A, |w| \le |z|, R_1 \le |z| \le R_2\}$$ and the action of $g\in SL(2, \R)$ on $(z,w) \in\C^2$ is the diagonal $\R$-linear action $g\cdot(z,w) = (gz,gw)$. Adding (\ref{eq:circle}) over all $(z, w) \in \La_{\om}^2$, we will prove for large $R$
\begin{equation*} 
\label{eq:limits} \frac{1}{2\pi} \int_{0}^{2\pi} \widehat{h_A}(g_{\log(R)} r_{\theta} \omega)\, d\theta \approx \frac{N_A(\omega, R) - N_A(\omega, R/2)}{\pi R^2}.\end{equation*} 
\paragraph*{\bf Circle averages} Once again we will need to show that the limit of the circle averages $$\lim_{t\to\infty}\frac{1}{2\pi} \int_{0}^{2\pi} \widehat{h_A}(g_t r_{\theta} \omega)\, d\theta$$ exists. To do that we again will implement Nevo's theorem (Theorem~\ref{thm:Nevo}) along with careful analysis of the boundary of the support of $h_A$. We will also rely on~\cite[Theorems 1.2 and 3.4]{AthreyaCheungMasur19} which shows that there is a $\kappa>0$ such that $$\widehat{h} \in L^{1+\kappa}(\hh, \mu),$$ and provides a version of the Siegel--Veech formula which works for functions defined on  $\mathbb{C}^2$.

\subsection{Organization of the paper}\label{sec:organization}
We start the paper in \S\ref{sec:parallel} by discussing the important special case of \emph{lattice surfaces}, which provides a first step in thinking about pairs of saddle connections, using fundamental results of Veech~\cite{Veech98}. In \S\ref{sec:spectral}, we state Nevo's ergodic theorem and a version of a Siegel--Veech type formula from~\cite{AthreyaCheungMasur19}, which we use to prove convergence of circle averages of Siegel--Veech transforms for continuous functions. This is Proposition~\ref{prop:convergescont}.   In \S\ref{sec:approx}, we construct our function $h_A$.  We state  the main results about $h_A$ which are  Theorem~\ref{thm:error} and Proposition~\ref{prop:quadratic} about almost sure quadratic upper bounds, and Proposition~\ref{prop:hAlimit} about convergence of circle averages for $h_A$. They are proven in the last two sections.   In  \S\ref{sec:approx},  they are used to prove Theorem~\ref{theorem:virtual:ae}. In \S\ref{sec:estimates} we bound the left side of Theorem~\ref{thm:error}  in terms of certain error terms. This is given in  (\ref{eq:decomp}).  In \S\ref{sec:upperbound}, we prove Proposition~\ref{prop:quadratic} and a lemma necessary for Proposition~\ref{prop:hAlimit}, along with a key modification to prove Proposition~\ref{prop:Mjbounds}.  These altogether give the   bounds on the error terms on the right side of (\ref{eq:decomp}) that finally allows  us to prove  Theorem~\ref{thm:error}.

\medskip

\paragraph*{\bf Acknowledgements.} We thank the Mathematical Sciences Research Institute (MSRI) where a large portion of this work was done in the Fall 2019 program on Holomorphic Differentials in Mathematics and Physics. We thank the Fields Institute where we had preliminary discussions in Fall 2018 during the program on Teichm\"uller Theory and its Connections to Geometry, Topology and Dynamics.The authors are grateful to Alex Wright for pointing out the proof of Corollary~\ref{cor:cApositive}. We also thank Jon Chaika for his suggested method to fix the proof of quadratic upper bounds. We also thank David Aulicino, Claire Burrin, Max Goering, Kanishka Katipearachchi, Yair Minsky, and John Smillie for useful discussions.  J.S.A. was partially supported by NSF CAREER grant DMS 1559860 and NSF grant DMS 2003528; the Pacific Institute for the Mathematical Sciences; the Royalty Research Fund and the Victor Klee fund at the University of Washington; and this work was concluded during his term as the Chaire Jean Morlet at the Centre International de Recherches Mathematique-Luminy. S.F. was partially supported by the Deutsche Forschungsgemeinschaft (DFG) -- Projektnummer 445466444 and 507303619.

\section{Lattice surfaces}\label{sec:parallel}

\subsection{Lattice surfaces}\label{sec:lattice} Given a surface $\om \in \hh$, we define its \emph{Veech group} $SL(\om)$ to be its stabilizer under the $SL(2, \mathbb R)$ action. A class of surfaces where counting problems are well-understood are \emph{lattice surfaces}, surfaces $\om$ whose stabilizer $\Gamma = SL(\omega)$ under the $SL(2, \R)$-action is a lattice. These are also known as \emph{Veech surfaces}. While lattice surfaces are rare, in the sense they form a set of measure $0$ in each stratum, they are a \emph{dense} set in each stratum. See Smillie-Weiss~\cite{SmillieWeiss10} and the references within for more details.

\subsubsection{Counting and orbits}\label{sec:orbits} Veech~\cite{Veech98} showed that in this setting the set of holonomy vectors $\La_{\om}$ is a finite union of orbits of the Veech group. That is, there is a finite collection of complex numbers $z_1, z_2, \ldots z_m$ such that \begin{equation}\label{eq:Veech}\La_{\om} = \bigcup_{i=1}^m \Gamma z_i. \end{equation} Using this, and techniques from homogeneous dynamics, he proved, for each $i$, there is a $c_i$ so that $$\# \left(\Gamma z_i \cap B(0, R)\right) \sim c_i R^2,$$ and thus overall quadratic asymptotics for $N(\omega, R)$.

\subsubsection{No small triangles}\label{sec:nst} Subsequently, Smillie-Weiss~\cite{SmillieWeiss10} gave many equivalent characterizations of lattice surfaces. In particular, they showed that $\om$ is a lattice surface if and only if it satisfies the \emph{no small virtual triangles (NSVT)} condition: there is an $A_0 >0$ so that for any non-parallel $z, w \in \La_{\om}$, $$|z \wedge w|>A_0.$$ So for $A <A_0$, the problem of understanding $N_A(\omega, R)$ becomes the problem of counting parallel pairs of vectors in $\La_{\om}$. 

\subsubsection{Counting parallel pairs}\label{sec:countingparallel} We write $$N_0(\omega, R) =\# \{ (z, w): |z \wedge w| = 0, |z| \le  |w| \le R \}.$$

\begin{Prop}\label{theorem:parallel} Let $\omega$ be a lattice surface. There is a constant $c = c(\omega)$ such that $$\lim_{R \rightarrow \infty} \frac{N_0(\omega, R)}{R^2} = c.$$

\end{Prop}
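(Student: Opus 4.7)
The plan is to combine the two classical results for lattice surfaces recalled in \S\ref{sec:orbits}: the finite orbit decomposition \eqref{eq:Veech}, $\La_\omega = \bigsqcup_{i=1}^m \Gamma z_i$ (with orbits chosen to be disjoint), and Veech's quadratic asymptotic $\#(\Gamma z_i \cap B(0,R)) \sim c_i R^2$. Setting
$$N^{ij}(R) := \#\{(z,w)\in \Gamma z_i \times \Gamma z_j : |z\wedge w|=0,\ |z|\le|w|\le R\},$$
one has $N_0(\omega,R) = \sum_{i,j=1}^m N^{ij}(R)$, so it suffices to prove that each $N^{ij}(R) \sim c^{ij} R^2$ for some $c^{ij} \ge 0$.

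A pair $(\gamma_1 z_i,\gamma_2 z_j)$ is parallel iff $\gamma_2^{-1}\gamma_1$ sends the line through $z_i$ to the line through $z_j$, which forces the $\R\p^1$-classes $[z_i],[z_j]$ to lie in a common $\Gamma$-orbit. For $(i,j)$ whose direction classes are not $\Gamma$-equivalent, $N^{ij}\equiv 0$. Otherwise, I would replace $z_i$ by a suitable element of its orbit (without altering $\Gamma z_i$) so that $z_i$ and $z_j$ lie on a common line $L$, and write $z_j = \mu_{ij} z_i$ for some $\mu_{ij}\in\R\setminus\{0\}$.

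The key structural point is that $\Gamma_L := \Gamma\cap\mathrm{Stab}_{SL(2,\R)}(L)$ acts on $L$ by $\pm 1$: since $L$ is a cusp of the lattice $\Gamma$, the group $\Gamma_L$ is virtually cyclic, generated by a parabolic element fixing $L$ pointwise together with possibly $-I$. Consequently every parallel pair in $\Gamma z_i\times\Gamma z_j$ has the rigid form $(\pm\gamma z_i,\gamma z_j)$ for some $\gamma\in\Gamma$, so $w = \pm\mu_{ij} z$ and the length ratio $|w|/|z| = |\mu_{ij}|$ is constant across the whole family. The constraint $|z|\le|w|$ thus collapses to the single binary condition $|\mu_{ij}|\ge 1$.

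When $|\mu_{ij}|<1$, $N^{ij}(R) = 0$. When $|\mu_{ij}|\ge 1$, each $w \in \Gamma z_j\cap B(0,R)$ contributes exactly $k_{ij}\in\{1,2\}$ admissible partners $z$ (the ambiguity recording whether $-z_i$ also lies in $\Gamma z_i$), so $N^{ij}(R) = k_{ij}\,\#(\Gamma z_j\cap B(0,R)) \sim k_{ij} c_j R^2$ by Veech's asymptotic. Summing yields $N_0(\omega, R) \sim c R^2$ with $c = \sum_{|\mu_{ij}|\ge 1} k_{ij} c_j$. The main obstacle is the structural lemma about $\Gamma_L$ -- that cusp stabilizers in $SL(2,\R)$-lattices are essentially unipotent -- together with the choice of disjoint representatives $z_i$ so that the decomposition $N_0 = \sum N^{ij}$ involves no overcounting; everything else is bookkeeping.
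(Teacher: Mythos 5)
Your proof is correct and takes essentially the same route as the paper's: both decompose the set of holonomy vectors into finitely many $\Gamma$-orbits, use the rigidity of cusp stabilizers in a lattice (that $\Gamma_L$ acts on the line $L$ only by $\pm 1$) to see that parallel pairs are $\Gamma$-translates of finitely many model pairs, and then sum Veech's per-orbit quadratic asymptotics. The only real difference is bookkeeping: the paper groups the orbits by cusp, ordering the finitely many saddle connections in each periodic direction by length and summing $\#(\Gamma\cdot(z_{i,k},z_{i,j})\cap B(0,R)^2)$, whereas your multiplicity $k_{ij}\in\{1,2\}$ (recording whether $-z_i\in\Gamma z_i$) tracks explicitly a sign ambiguity that the paper's diagonal-orbit count leaves implicit.
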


\paragraph*{\bf Cusps and computing $c$} We prove this result in \S\ref{sec:parallel}, and show how to compute $c$ using the decomposition of $\La_{\om}$ into orbits of $SL(X,\om)$, the structure of the cusps of the Fuchsian group $\Gamma = SL(X,\om)$, and Veech's counting results. We note that a generic (in the sense of Masur-Smillie-Veech-almost every) surface has \emph{no} pairs of parallel holonomy vectors, as the existence of such a pair is a closed  and positive codimension condition.

\paragraph*{\bf Parallel saddle connections} We start our study of counting pairs of saddle connections by proving Proposition~\ref{theorem:parallel}. Let $\om$ be a lattice surface. We recall first further details of Veech's result on the decomposition of $\La_{\om}$ into finitely many orbits of the Fuchsian group $\Gamma = SL(\om)$ acting $\R$-linearly on $\C$.

\subsection{Cusps and orbits}\label{sec:cusps} We denote the finite collection of \emph{cusps} of $\Gamma$ by $[\Gamma_1], [\Gamma_2], \ldots [\Gamma_n]$, where each $[\Gamma_i]$ is a distinct conjugacy class of a parabolic subgroup of $\Gamma$. To each $[\Gamma_i]$ we can choose a direction in $\R \cup \{\infty\}$ stabilized by a representative of $[\Gamma_i]$, and in this direction, there will be a finite set of parallel saddle connections $\gamma_{i, 1}, \ldots, \gamma_{i, m_i}$ with $$\ell(\gamma_{i, 1}) \geq \ell(\gamma_{i, 2}) \ldots \geq \ell( \gamma_{i, m_i}).$$ Let $$z_{i, j} = \int_{\gamma_{i,j}} \omega, r_{i,k} = \frac{z_{i,1}}{z_{i,k}}.$$ Note that for a fixed $i$, the $z_{i,j}$ are parallel, so the $r_{i,k} = \frac{z_{i,1}}{z_{i,k}} = \frac{\ell(\gamma_{i,1})}{\ell(\gamma_{i,k})}$ are real numbers greater than $1$ for $k \geq 1$. By~\cite[Theorem 16.1]{Veech98}, there is a $c_{i,k}$ such that $$\# (\Gamma \cdot z_{i,k}  \cap B(0, R)) \sim c_{i,k} R^2.$$ Note that since $$\# (\Gamma \cdot z_{i,k}  \cap B(0, R)) = \# (\Gamma \cdot z_{i,1}  \cap B(0, Rr_{i,k})) ,$$ we have $$c_{i, k} = c_{i, 1} r_{i,k}^2.$$ Therefore, for $j <k$, $$\# (\Gamma \cdot (z_{i,k}, z_{i,j})  \cap B(0, R)^2) = \# (\Gamma \cdot z_{i,j}  \cap B(0, R)) \sim c_{i, 1} r_{i,j}^2 R^2.$$ \subsection{Completing the proof} To complete the proof of Proposition~\ref{theorem:parallel}, we put these together to obtain $$N_0(\omega, R) = \sum_{i=1}^n \sum_{j=1}^{m_j-1} \sum_{k=1}^{j-1} \# (\Gamma \cdot (z_{i,k}, z_{i,j})  \cap B(0, R)^2) \sim \sum_{i=1}^n \sum_{j=1}^{m_j-1} \sum_{k=1}^{j-1} c_{i,1} r_{i,j}^2 R^2.$$ This proves Proposition~\ref{theorem:parallel}, with $$c = \sum_{i=1}^n \sum_{j=1}^{m_j-1} \sum_{k=1}^{j-1} c_{i,1} r_{i,j}^2 = \sum_{i=1}^n c_{i,1}\sum_{j=1}^{m_j-1} (j-1)  r_{i,j}^2.$$ \qed

\section{Nevo's ergodic theorem and Siegel--Veech measures}\label{sec:spectral} 
\paragraph*{\bf Counting asymptotics and averaging operators} We state the results of \cite{nevo2017equidistribution} and \cite{AthreyaCheungMasur19} needed to precisely move between circle averages and counting asymptotics. We then combine these results to show convergence of averaging operators of Siegel--Veech transforms.
\subsection{Averaging operators}\label{sec:average} Suppose $SL(2, \R)$ acts on a space $X$ (here our spaces will be $\C$, $\C^2$, and connected components of strata $\hh$, all with the natural $\R$-linear actions). Given a function $h$ on $X$, and $p \in X$, we define $$\left(A_t h\right)(p) = \frac{1}{2\pi} \int_0^{2\pi} h(g_t r_{\theta} p) \, d\theta.$$ Note that for $f \in B_c(\C)$ or $h \in B_c(\C^2)$, we can interchange sum and integral to obtain $$(A_t \widehat{f})(\om) = \widehat{(A_t f)}(\om) \mbox{ and } (A_t \widehat{h})(\om) = \widehat{(A_t h)}(\om).$$

\paragraph*{\bf Nevo's ergodic theorem} A  key tool is Nevo's ergodic theorem for the operators $A_t$ acting on $\mathcal H$.   To state the theorem we first need the definition of $K$-finite function.

\paragraph*{\bf K-finite functions} Given a space $X$ on which $SL(2,\R)$ acts, we say a function $f$ defined on $X$ is \textit{$K$-finite} if  the span of the functions $\{f\circ r_{\theta} : \theta \in [0, 2\pi)\}$ has finite dimension. Equivalently, there is an integer $m$ such that  for each $x\in X$ the function of $\theta$ defined by $p_{f, x}(\theta) = p(\theta)=f(r_\theta(x))$ is a trigonometric polynomial of  degree at most $m$.

  \begin{Theorem}\label{thm:Nevo}\cite[Theorem 1.1]{nevo2017equidistribution} Suppose $\mu$ is an ergodic $SL(2, \R)$-invariant probability measure on $\hh$. Assume $f \in L^{1+\kappa}(\hh, \mu)$ for some $\kappa >0$, and that $f$ is $K$-\emph{finite}. Let $\eta \in C_c(\R)$ be a continuous non-negative bump function with compact support and of unit integral. Then for $\mu$-almost every $\omega \in \hh$, $$\lim_{t \rightarrow \infty} \int_{-\infty}^{\infty} \eta(t-s) (A_s f)(\omega)ds = \int_{\hh} f d\mu.$$

\end{Theorem}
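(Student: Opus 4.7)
The plan is to combine a quantitative mean-ergodic decay estimate for the circle averages $A_t$ on $L^p(\hh,\mu)$ with a Borel--Cantelli argument applied to the $\eta$-smoothed operators $S_t f(\om) := \int_{-\infty}^{\infty} \eta(t-s)(A_s f)(\om)\,ds$. After subtracting $\int f\,d\mu$, which is fixed by every $A_s$ and hence by $S_t$, I may assume $f$ is mean-zero and aim for $S_t f(\om) \to 0$ almost surely.

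The key spectral estimate is the existence of some $\sigma > 0$ such that, for any $K$-finite $f \in L^2_0(\hh,\mu)$ of weight at most $m$,
\begin{equation*}
\|A_t f\|_{L^2(\hh,\mu)} \le C(m)\, e^{-\sigma t}\|f\|_{L^2}.
\end{equation*}
This comes from writing $A_t f$ as a finite sum, over the $K$-types appearing in $f$, of matrix coefficients $\langle \pi(g_t) v_n, e_0\rangle$ paired against a spherical vector, and applying standard decay of matrix coefficients for irreducible unitary representations of $SL(2,\R)$ with no invariant vectors (Howe--Moore, quantitatively refined on each $K$-isotype). Ergodicity of $\mu$ ensures that the trivial representation on $L^2(\hh,\mu)$ is spanned by constants, so the estimate applies to all of $L^2_0$. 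Since $A_t$ is an $L^\infty$-contraction, hence by Jensen an $L^p$-contraction for every $p \in [1,\infty]$, Riesz--Thorin between the contraction bound on $L^{1+\kappa}$ and the decaying $L^2$ bound yields
\begin{equation*}
\|A_t f\|_{L^{1+\kappa}} \le C'\, e^{-\sigma' t}\|f\|_{L^{1+\kappa}}
\end{equation*}
with $\sigma' > 0$, and the smoothed operator $S_t$ inherits the same decay with a constant depending on the support of $\eta$.

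The pointwise conclusion follows from Chebyshev and Borel--Cantelli. Along integer times,
\begin{equation*}
\mu\bigl(\{\om : |S_n f(\om)| > \varepsilon\}\bigr) \le \varepsilon^{-(1+\kappa)}\|S_n f\|_{L^{1+\kappa}}^{1+\kappa} \lesssim \varepsilon^{-(1+\kappa)} e^{-\sigma'(1+\kappa) n}
\end{equation*}
is summable in $n$, so $S_n f(\om) \to 0$ on a full-measure set. To interpolate to all $t$, the uniform continuity of $\eta$ together with the $L^{1+\kappa}$-contraction of $A_s$ controls $|S_t f(\om) - S_{\lfloor t\rfloor} f(\om)|$ in $L^{1+\kappa}$ uniformly on unit intervals, and a second application of Borel--Cantelli removes this oscillation almost surely. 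The main obstacle is the quantitative matrix-coefficient bound: the rate $\sigma$ depends on the spectral content of the Teichm\"uller geodesic flow on $(\hh,\mu)$, and the dependence of $C(m)$ on the $K$-weight $m$ must be controlled. The $K$-finiteness hypothesis is exactly what keeps the isotypic decomposition of $f$ finite, so that the worst rate and the worst constant over the finitely many $K$-types present in $f$ govern the whole argument and prevent the spectral estimate from degenerating.
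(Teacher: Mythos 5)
The paper does not prove this statement; Theorem~\ref{thm:Nevo} is quoted directly from Nevo's work \cite{nevo2017equidistribution}, and the authors take it as a black-box ingredient. So there is no ``paper's own proof'' to compare against — only Nevo's original argument.

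Your sketch is a plausible route, but it contains a gap that matters at the level of generality of the statement. The pivotal estimate you invoke,
$$\|A_t f\|_{L^2(\hh,\mu)} \le C(m)\, e^{-\sigma t}\,\|f\|_{L^2},\qquad \sigma>0,$$
is an exponential decay bound and therefore requires a \emph{spectral gap} for the unitary $SL(2,\R)$-representation on $L^2_0(\hh,\mu)$. Howe--Moore, which you cite, gives only qualitative vanishing of matrix coefficients ($\|A_t f\|_{L^2}\to 0$), not a rate; ergodicity alone does not upgrade this to exponential decay. Without that rate, the Chebyshev/Borel--Cantelli step collapses, because the tail probabilities are not summable along integer times. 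Nevo's theorem is stated for an arbitrary ergodic probability-measure-preserving action of $SL(2,\R)$ (the hypothesis is simply ergodicity plus $K$-finiteness and $L^{1+\kappa}$-integrability), so his proof cannot hinge on a spectral gap; it proceeds instead through a maximal inequality for the $\eta$-smoothed averages, which is available under much weaker spectral information and exploits the two-dimensionality of the smoothed averaging region in $SL(2,\R)$ in an essential way. You treat the $\eta$-smoothing as a near-trivial technicality (``inherits the same decay''), whereas in Nevo's approach it is structurally load-bearing.

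For the specific setting of this paper your argument could be completed: Avila--Gou\"ezel (building on Avila--Gou\"ezel--Yoccoz) establish a spectral gap for the $SL(2,\R)$-action on affine invariant submanifolds with respect to their natural measures, including the Masur--Smillie--Veech measure $\mu$ on a stratum component. If you wish to prove the theorem in the form actually used here rather than in Nevo's generality, you must cite that spectral-gap input explicitly; it is a deep theorem, not a consequence of ergodicity. Two smaller points: (i) the Riesz--Thorin interpolation should be between $L^1$ (contraction) and $L^2$ (decay) to produce decay at the exponent $1+\kappa<2$, since interpolating between $L^{1+\kappa}$ and $L^2$ yields nothing new at the endpoint $p=1+\kappa$; and (ii) your passage from integer times to all $t$ needs a genuine equicontinuity bound on $t\mapsto S_t f(\omega)$ on unit intervals (e.g.\ a Lipschitz estimate via $\eta\in C^1_c$ and $\int_{|s-t|\le M}|A_s f(\omega)|\,ds<\infty$ a.e.), not merely an $L^{1+\kappa}$ bound on $S_t f - S_{\lfloor t\rfloor}f$ at fixed $t$, since the supremum is over an uncountable set.
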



\subsection{Siegel--Veech measures}\label{sec:svmeasures} We will apply Nevo's theorem to the Siegel--Veech transforms of functions defined on $\mathbb{C}^2$.  By ~\cite[Theorem 3.4]{AthreyaCheungMasur19}, for any $f \in B_c(\C)$, for some $\kappa>0$, $\widehat{f} \in L^{2+2\kappa}(\hh, \mu)$. Then, for any $h \in B_c(\C^2)$, $\widehat{h} \in L^{1+\kappa}(\hh, \mu)$, since we can dominate  $$\widehat{h}(\omega) = \sum_{v_1, v_2 \in \Lambda_{\omega}} h(v_1, v_2)$$ by $(\widehat{f})^2$ where $f = \|h\|_{\infty} \chi_{H}$, where $H$ denotes the union of the projections of the support of $h$ via the coordinate projection maps. By the invariance of $\mu$, and the integrability condition, $$h \longmapsto \int_{\mathcal H} \widehat{h}(\omega)d\tau(\omega)$$ is an $SL(2,\R)$-invariant linear functional on $C_c(\C^2)$. Therefore, there is an $SL(2, \R)$-invariant measure $m = m(\mu)$ (a \emph{Siegel--Veech measure}) on $\C^2 $  so that $$\int_{\mathcal H} \widehat{h}(\omega)d\mu(\omega)= \int_{\C^2} h \, dm$$
 By the monotone convergence theorem, we can extend the class of $h$ for all $h\in B_c^{SC}(\C^2)$, which consists of functions $h \in B_c(\C^2)$ which are either upper or lower semi-continuous. In particular $B_c^{SC}$ will include characteristic functions of the compact closed sets defined in Section~\ref{sec:approx}. 
 
\paragraph*{\bf Invariant measures on $\C^2$} To describe the possible $SL(2,\R)$-invariant measures on $\C^2$, we need to understand $SL(2, \R)$-orbits on $\C^2$. For $t \in\R$, let $$D_t = \{ (z, w) \in 
\C^2:|\det(z | w) = t\},$$
and notice we can identify $D_1$ with  $SL(2,\R)$.
For $t \neq 0$, $D_t$ is an $SL(2, \R)$-orbit. The determinant zero locus $D_0$ decomposes further. For $s \in \mathbb P^1(\R)$, let  $$L_s = \{ (z, sz): z \in \C \backslash\{0\}\},$$ with $$L_{\infty} = \{ (0, w): w \in \C \backslash\{0\}\}.$$ The sets $D_t$ and $L_s$ are the non-trivial $SL(2,\R)$ orbits on $\C^2$, and each carries a unique (up to scaling) $SL(2, \R)$-invariant measure. These are the (non-atomic) ergodic invariant measures for $SL(2,\R)$ action on $\C^2$. On $D_t$, the measure is Haar measure (which we denote $\lambda$) on $SL(2, \R)$, and on $L_s$ it is Lebesgue on $\C$. Thus, associated to any $SL(2,\R)$ invariant measure $m$ on $\C^2$ we have measures $\nu = \nu(m)$ and $\rho = \rho(m)$ so that we have~\cite[Theorem 1.2]{AthreyaCheungMasur19}
\begin{equation} \label{eq:SV-measure}\int_{\C^2} h \,dm = \int_{\R\minuszero} \left(\int_{SL(2, \R)} h(tz, w) d\lambda(z,w) \right) d\nu(t) + \int_{\mathbb P^1(\R)} \left(\int_{\C} h(z, sz) dz\right) d\rho(s).
\end{equation} 

\subsection{Convergence of averaging operators for continuous functions}

\subsubsection{Notation}\label{sec:SV:notation} The goal of this section is Proposition~\ref{prop:convergescont}  which establishes convergence of circle averages for continuous functions.  Later we will establish the same result for the function $h_A$.  In the course of the proof (and in later sections of the paper) the functions we are taking transforms of will be sometimes quite complicated to write down. We introduce the following notation: given bounded compactly supported functions $f \in B_c(\C)$ or $h \in B_c(\C^2)$ we write $$\widehat{f}(\om) = f^{\SV}(\om) \mbox{ and } \widehat{h}(\om) = h^{\SV}(\om).$$

\begin{Prop}
\label{prop:convergescont}
Suppose $\varphi \in C_c(\C^2)$. Suppose the support does not contain points of the form  $(z,0)$ or $(0,w)$.
Then for $\mu$-almost every $\omega \in \hh$, the circle averages of $\widehat{\varphi}$ converge $$\lim_{\tau\to\infty} A_\tau \widehat \varphi(\omega) = \int_{\mathcal{H}} \widehat{\varphi} \,d\mu = \int_{\C^2} \varphi \,dm.$$ 
\end{Prop}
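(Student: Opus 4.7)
The plan is to apply Nevo's ergodic theorem (Theorem~\ref{thm:Nevo}) after sandwiching $\varphi$ above and below by $K$-finite continuous functions, and then to deduce the conclusion for $\varphi$. First, I verify that $\widehat{\varphi} \in L^{1+\kappa}(\hh,\mu)$ for some $\kappa > 0$: the two coordinate projections of $\operatorname{supp}(\varphi)$ lie in a bounded set $H \subset \C$, so $|\widehat{\varphi}(\omega)| \le \|\varphi\|_\infty\,\widehat{\one_H}(\omega)^2$, and by~\cite[Theorem 3.4]{AthreyaCheungMasur19} the right-hand side lies in $L^{1+\kappa}(\hh,\mu)$. The same bound applies to any continuous compactly supported approximation of $\varphi$ whose support also avoids the axes.

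Next, for any $\epsilon > 0$, I would construct continuous compactly supported $K$-finite functions $\psi^{\pm}_\epsilon \colon \C^2 \to \R$ satisfying $\psi^{-}_\epsilon \le \varphi \le \psi^{+}_\epsilon$, with supports contained in a fixed $K$-saturated compact neighborhood of $\operatorname{supp}(\varphi)$ still disjoint from the axes, and with $\|\psi^{+}_\epsilon - \psi^{-}_\epsilon\|_\infty < \epsilon$. The construction uses that $K = \SO(2)$ acts diagonally on $\C^2$ by $(z,w)\mapsto(r_\theta z, r_\theta w)$: Fej\'er-summing the Fourier series of $\varphi$ in the $K$-angular direction yields uniform approximation by $K$-finite continuous functions on the neighborhood, after which a small $K$-invariant bump enforces the one-sided inequalities. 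The $K$-finiteness of $\psi^{\pm}_\epsilon$ transfers to their Siegel--Veech transforms $\widehat{\psi^{\pm}_\epsilon}$, which lie in $L^{1+\kappa}(\hh,\mu)$ by the integrability argument above.

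Applying Nevo's theorem to $\widehat{\psi^{\pm}_\epsilon}$ gives, for $\mu$-a.e.\ $\omega$,
$$\lim_{t\to\infty}\int \eta(t-s)\, A_s\widehat{\psi^{\pm}_\epsilon}(\omega)\,ds = \int_\hh \widehat{\psi^{\pm}_\epsilon}\,d\mu = \int_{\C^2} \psi^{\pm}_\epsilon\,dm.$$
The pointwise sandwich $A_s\widehat{\psi^{-}_\epsilon}(\omega) \le A_s\widehat{\varphi}(\omega) \le A_s\widehat{\psi^{+}_\epsilon}(\omega)$ (which persists after integration against $\eta$), combined with $\int(\psi^{+}_\epsilon-\psi^{-}_\epsilon)\,dm\to 0$ as $\epsilon\to 0$ from the Siegel--Veech formula~\eqref{eq:SV-measure}, then yields convergence of the smoothed $\eta$-averages of $A_\cdot \widehat{\varphi}(\omega)$ to $\int\varphi\,dm$.

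The main obstacle is upgrading this smoothed convergence to pointwise convergence of $A_\tau\widehat{\varphi}(\omega)$ itself, since Nevo's theorem as stated delivers only $\int \eta(\tau-s) A_s f(\omega)\,ds$. I plan to establish pointwise convergence first for the $K$-finite approximants: because each $\psi^{\pm}_\epsilon$ has compact support avoiding the axes, only finitely many pairs in $\La_\omega^2$ contribute to $A_s\widehat{\psi^{\pm}_\epsilon}(\omega)$ on any bounded $s$-window, and each contribution $\int_0^{2\pi}\psi^{\pm}_\epsilon(g_sr_\theta v_1, g_sr_\theta v_2)\,d\theta$ is continuous in $s$. This local regularity, combined with Nevo applied for bumps $\eta$ of shrinking width, should force $A_\tau\widehat{\psi^{\pm}_\epsilon}(\omega) \to \int \psi^{\pm}_\epsilon\,dm$ pointwise a.e. A final sandwich of $A_\tau\widehat{\varphi}(\omega)$ between $A_\tau\widehat{\psi^{-}_\epsilon}(\omega)$ and $A_\tau\widehat{\psi^{+}_\epsilon}(\omega)$ and sending $\epsilon\to 0$ then concludes the proof; controlling the oscillation of $s\mapsto A_s\widehat{\psi^{\pm}_\epsilon}(\omega)$ on unit $s$-windows for large $s$ is the key technical work required beyond Theorem~\ref{thm:Nevo}.
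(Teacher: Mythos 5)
Your approach and the paper's share the big ingredients: $K$-finite approximation of $\varphi$ (you propose Fej\'er summation in the angular variable, the paper uses Stone--Weierstrass on a product of annuli), the $L^{1+\kappa}$ integrability from~\cite{AthreyaCheungMasur19}, a sandwich/triangle-inequality argument, and Nevo's theorem. But the step you yourself flag as ``the key technical work required beyond Theorem~\ref{thm:Nevo}'' --- upgrading convergence of the smoothed averages $\int \eta(\tau-s)\,A_s\widehat{\psi^{\pm}_\epsilon}(\omega)\,ds$ to pointwise convergence of $A_\tau\widehat{\psi^{\pm}_\epsilon}(\omega)$ --- is a genuine gap, not a technicality. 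Continuity of $s\mapsto A_s\widehat{\psi^{\pm}_\epsilon}(\omega)$ on bounded windows does not suffice: the number of contributing pairs in $\La_\omega^2$ grows as $s\to\infty$, so the modulus of continuity can degrade without bound, and then convergence of the $\eta_\gamma$-smoothed averages for each fixed $\gamma>0$ (even for all $\gamma$ in a countable sequence tending to zero) does not imply pointwise convergence. The model example $g(\tau)=\sin(\tau^2)$ shows smoothed convergence can coexist with a non-convergent $g$; you would need a uniform-in-$s$ modulus estimate, and establishing one is essentially a separate argument.

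The paper sidesteps this issue by applying the mollifier to the \emph{function} rather than just to the Nevo averaging kernel. It convolves $f_n$, $\varphi$, and the dominating indicator $\one_{H_1}$ with $\eta_\gamma$ in the $g_t$-direction, and uses the identity
$$A_\tau(\eta_\gamma * f)^{\SV}(\omega) = \int_{-\infty}^{\infty} \eta_\gamma(\tau-s)\,\bigl(A_s \widehat{f}\bigr)(\omega)\,ds,$$
so that the circle average of the convolved function is \emph{already} the Nevo-form smoothed average, and Nevo gives convergence of $A_\tau(\eta_\gamma*f_n)^{\SV}(\omega)$ directly with no upgrade needed. Then $\bigl|A_\tau\widehat{\varphi}(\omega) - A_\tau(\eta_\gamma*\varphi)^{\SV}(\omega)\bigr|$ is controlled pointwise by $\epsilon\,A_\tau\widehat{\one_H}(\omega)$ using uniform continuity of $\varphi$, with $A_\tau\widehat{\one_H}(\omega)$ in turn bounded by another Nevo application to $(\eta_{\gamma_0}*\one_{H_1})^{\SV}$. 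If you insert this convolution step, so that your $K$-finite approximants are replaced by their mollifications and you compare $\eta_\gamma*\varphi$ to $\varphi$ via uniform continuity, your sandwich argument closes without the open oscillation estimate.
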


\begin{proof}

By~\cite[Theorem 3.4]{AthreyaCheungMasur19}, $\widehat \varphi \in L^{1+\kappa}(\hh,\mu)$ for some $\kappa>0$. We want to construct $K$-finite functions which sufficiently approximate $\widehat{\varphi}$, which we do by constructing a family of $K$-finite functions which are dense in the continuous functions.

\paragraph*{\bf Products of annuli} For  $r_1>r_0>0$ and $N\geq 2$, define $H= H(r_0, r_1, N) = \prod_{i=1}^N \overline{A(r_0,r_1)}$ to be a product of $N$ annuli.   The group $K$ acts on $\C^N$ diagonally, preserving $H$, via $$r_{\theta}(z_1,\ldots, z_N) = (r_{\theta}z_1,\ldots  r_{\theta}z_N).$$ Let $C(H)$ denote the set of continuous functions of $H$, and consider the family $\mathcal F \subset C(H)$ given by 

$$\mathcal{F}=\{ f_{m_1,n_1,m_2,n_2,\ldots, m_N.n_N}: m_i, n_i \in \Z \}$$ where for $z_j= r_j e^{i\theta_j},$ 
$$ f_{m_1,n_1,\ldots m_N,n_N}(z_1,\ldots, z_N) =\prod_{j=1}^N r_j^{m_j}e^{in_j\theta_j}.$$  

\noindent By definition, the functions $f\in\mathcal{F}$ are $K$-finite.
\paragraph*{\bf A distinguished subalgebra} We consider a subalgebra $\mathcal{A}$ of $C(H)$ given by the $\C$-linear span of $\mathcal{F} \cup \{1\}$, where $1$ is the constant unit function on $H$. Then by definition, $\mathcal{A}$ is closed under addition and multiplication by complex scalars. Moreover $\mathcal{F}$ is closed under multiplication and complex conjugation, so $\mathcal{A}$ is an algebra. 
 \begin{lemma}
\label{lem:separate}
The collection $\mathcal{F}$ separates points on $H$, meaning if $p\neq q$ are points of $H$ then there exists $f\in\mathcal{F}$ such that $f(p)\neq f(q)$. 
\end{lemma}
\begin{proof}
 Let $(z_1,\ldots, z_N) =
 \left(r_1 e^{i\theta_1}, \ldots, r_Ne^{i\theta_N}\right)$ 
and $\left(\tilde{z}_1,\ldots, \tilde{z}_N\right)= \left(\tilde{r}_1 e^{i\tilde{\theta}_1}, \ldots, \tilde{r}_Ne^{i\tilde{\theta}_N}\right)$ 
be two distinct points in $H$.  Suppose first case that $\prod_{j=1}^N r_j\neq \prod_{j=1}^N\tilde{r}_j $. Then we have $$f_{1,0,1,0, \dots, 1,0} (z_1,\ldots, z_N) \neq f_{1,0,1,0,\ldots, 1,0}(\tilde{z}_1,\ldots, ,\tilde{z}_N).$$
Now suppose $\prod_{j=1}^Nr_j=\prod_{j=1}^N  \tilde{r}_j$. Let us assume first that for some $j$  it is the case that $r_j\neq \tilde{r}_j$.  Without loss of generality assume $j=N$. 
Then since $r_N,\tilde{r}_N\neq 0$, $$r_N\prod_{j=1}^N r_j\neq \tilde{r}_N\prod_{j=1}^N \tilde{r}_j$$ so $f_{1,0,\ldots, 1,0, 2,0}$ separates the two points. 
Now assume $r_j=\tilde{r}_j$ for all $j$.  
Then we have to use the angles to separate the points.  The first subcase is $$\sum_{j=1}^N \theta_j\neq \sum_{j=1}^N \tilde{\theta}_j.$$ In that case $f_{0,1,\ldots, 0,1}$ separates the two points. 
 Thus assume  $$\sum_{j=1}^N \theta_j=\sum_{j=1}^N \tilde{\theta}_j.$$  Since the points are distinct and $r_j=\tilde{r}_j$ for all  $j$ we must have  $\theta_j\neq \tilde{\theta}_j$ for some $j$. Without loss of generality assume $j=N$. Then $$\sum_{j=1}^N \left(\theta_j+\theta_N \right) \neq \sum_{j=1}^N \left(\tilde{\theta}_j +\tilde{\theta}_N\right)$$ and so $f_{0,1,\ldots, 0,1,0,2}$ separates. 
\end{proof}

\paragraph*{\bf Stone-Weierstrass} Thus by the Stone-Weierstrass Theorem~\cite[Theorem 1.26]{Rudin}, this set of $K$-finite functions is dense in the uniform topology in $C(H)$.  Now we fix $N=2$ and a translation surface $\omega$.  We choose $r_0,r_1$ in the definition of $H= H(r_0, r_1)$ so that $H$ contains the support of $\varphi$, so  $\varphi \leq \norm{\varphi}_{\infty} \one_{H}.$
We will also consider also the slightly larger set $H_1 = H(r_0/2, r_1+1) =  \overline{A(\frac{r_0}{2},r_1+1)}^2$. Notice that $H, H_1$ are rotation invariant subsets of $\C^2$ under the diagonal action of $r_\theta$. 

\paragraph*{\bf $K$-finiteness} We next see that Siegel--Veech transforms of elements of $\mathcal{F}$ are $K$-finite.
This follows from the fact that for each $\om\in\mathcal{H}$,  $\hat f(\om)$ is a finite sum of values of $f$, so the corresponding function $p(\theta) = p_{\widehat{f}, \om}(\theta)$ in the definition of $K$-finiteness for $\widehat{f}$ is a finite sum of trigonometric polynomials of degree at most $m$ by the $K$-finiteness of $f$; hence a polynomial of degree $m$ as well.

\paragraph*{\bf Mollifiers} Fix  $\eta\in C_c^\infty(\mathbb{R})$ a positive mollifier, with $\eta(t) \geq 0$, $$\int_{-\infty}^\infty \eta(t)  \,dt= 1,$$ and support of $\eta$ in $[-1,1]$.  For $f\in C_0^\infty(\mathbb{C}^2)$, and $(z, w) \in \C^2$, denote the convolution by $$(\eta*f)(z,w):= \int_{-\infty}^\infty \eta(t)f(g_{-t}(z,w))dt.$$ We will use the notation $\eta_\gamma(t) = \gamma^{-1} \eta(t/\gamma)$ which has the property that the support of $\eta_\gamma$ is in $[-\gamma, \gamma]$ and
$$\lim_{\gamma \to 0} \eta_\gamma(t) = \delta(t)$$ where $\delta$ is the Dirac delta distribution.  

\paragraph*{\bf Uniform convergence} We claim  $\eta_\gamma * \varphi $ converges uniformly to $\varphi$ on $\C^2$ as $\gamma \to 0$. To see this, for any $\epsilon > 0$ choose $\gamma_0$ so that whenever $\gamma \leq \gamma_0$, since the support of $\eta_\gamma$ is contained in $[-\gamma, \gamma]$, by uniform continuity of $\varphi$ on $H$, whenever $|t|< \gamma$, 
$$|\varphi(g_{-t}(z,w)) - \varphi (z,w)| < \epsilon.$$
Thus  for any $(z,w) \in \C$,
$$|(\eta_\gamma * \varphi)(z,w) - \varphi(z,w)| \leq \int_{-\gamma}^\gamma \eta_{\gamma}(t) |\varphi(g_{-t}(z,w) - \varphi(z,w)| \,dt < \epsilon.$$
We will also use the fact that there is some $\gamma_0$ so that for $\gamma<\gamma_0$, for any $(z,w) \in H$, $g_{-\gamma} (z,w)\in H_1$, so 
\begin{equation}
\label{eq:convolve}
\one_H \leq \eta_{\gamma_0} * \one_{H_1}.
\end{equation}

\noindent By Theorem~\ref{thm:Nevo}, for each $f_n$ and almost every $\omega$, 
\begin{align*} 
\lim_{\tau\to\infty} A_\tau(\widehat{\eta * f_n})(\omega) &= \lim_{\tau\to\infty} \int_{-\infty}^\infty\eta(t) \left(A_{\tau-t}\widehat{f_n}\right)(\om) dt \\ 
&=
\lim_{\tau\to\infty}\int_{-\infty}^{\infty}\eta(\tau-s)\left(A_s\widehat{f_n}\right)(\om) ds\\
& = \int_{\mathcal{H}} \widehat{f}_n \,d\mu = \int_{\C^2} f_n \,dm,
\end{align*}
where $m$ is the Siegel--Veech measure as in Equation~\eqref{eq:SV-measure}.

\paragraph*{\bf A triangle inequality argument} Now that we've established the properties needed for $K$-finite functions, we want to use a triangle inequality argument to get bounds on the difference $|A_{\tau}\widehat{\varphi}(\omega) - \int_{\C^2} \varphi \,dm|$. Fix $\epsilon > 0$. First we use uniform convergence to obtain control over the Siegel--Veech transforms. Choose $H$ slightly larger if necessary so all $\omega$ have a saddle connection in $B(0,l_{\varphi})$, so $$1\leq \widehat{\one_H}(\omega) < \infty.$$
\noindent Since $\one_H \in B_c^{SC}(\C^2)$, we have $\widehat{\one_H} \in L^{1+\kappa}(\mu)$ so
$$\int_\mathcal{H} \widehat{ \one_H} \, d\mu = m(H) < \infty.$$ By uniform convergence, choose $N$ so that for all $n\geq N$ and a corresponding sequence $\gamma_n$, for all $(z,w) \in H$,
\begin{equation}\label{eq:0unif}
|f_n(z,w) - \varphi (z,w)| < \epsilon \text{ and } |(\eta_{\gamma_n} * \varphi)(z,w) - \varphi(z,w)| < \epsilon.
\end{equation}

\paragraph*{\bf Differences of $SV$-transforms} Now that we have sufficient convergence, we now consider differences of Siegel--Veech transforms. Since the support of $f_n$ and $\varphi$ are contained in $H$, summing over $\Lambda_{\omega'}$ for any $\omega' \in \mathcal{H}$ gives the pointwise bounds
\begin{equation} \label{eq:0.25unif}|\widehat{f_n}(\omega') - \widehat{\varphi}(\omega')| < \epsilon \widehat{\one_H}(\omega')\text{ and } |(\eta_{\gamma_n} * \varphi)^{\SV}(\omega') - \widehat{\varphi}(\omega')| <\epsilon \widehat{\one_H}(\omega').
\end{equation}

\paragraph*{\bf Differences of circle averages}  Now we apply  Theorem~\ref{thm:Nevo} to get differences of circle averages. Fix $n$ large. Since $\one_{H_1}$ is $K$-finite, we apply Theorem~\ref{thm:Nevo}. So for almost every $\omega \in \mathcal{H}$ choose $T = T(\omega)$ large enough so that for all $\tau \geq T$,  
\begin{equation}\left|{A_\tau(\eta_{\gamma_n}*\one_{H_1})^{\SV}(\omega) - m(H_1)}\right|< \epsilon.
\end{equation}

 Choose $\tau \geq T$ for $T = T(\eta_{\gamma_0}, \omega)$ so that again by Theorem~\ref{thm:Nevo} \begin{equation}
\label{eq:033unif} A_\tau \widehat{\one_H(\omega)} \leq A_\tau (\eta_{\gamma_n} * \one_{H_1})^{\SV}(\om) \leq m(H_1) + \epsilon.
\end{equation}

\paragraph*{\bf Putting it together} We now are able to obtain bounds for the four differences used in the triangle inequality argument. For almost every $\omega$, and $\tau \geq T(\omega)$, we  apply Equation~\eqref{eq:0.25unif} for each $\omega'=g_\tau r_\theta \omega$, and then by Equation~\eqref{eq:033unif}
\begin{align} \label{eq:0.5unif}|A_\tau(\widehat{\eta_{\gamma_n}*f_n})(\omega) - A_\tau(\eta_{\gamma_n}*\varphi)^{\SV}(\omega)| &\leq \epsilon A_\tau(\eta_{\gamma_n} * \one_{H_1})^{\SV}(\omega) \\ \nonumber &< \epsilon\left[\epsilon + m(H_1)\right].
\end{align}

\noindent Equation \eqref{eq:0.25unif} also says that for each $\omega' = g_t r_\theta \omega$, we obtain
\begin{align} \label{eq:1unif}
 |A_\tau(\eta_{\gamma_n} * \varphi)^{\SV}(\omega) - A_\tau\widehat{\varphi }(\omega)| &\leq  A_\tau |(\eta_{\gamma_n} * \varphi)^{\SV}(\omega) - \widehat{\varphi}(\omega)| \\ \nonumber & \leq \epsilon A_\tau(\widehat{\one_H})(\omega) \\ \nonumber &\leq \epsilon[\epsilon+ m(H_1)].
\end{align}

\noindent Applying Theorem~\ref{thm:Nevo}, for a.e.~$\omega$ choose $T = T(\omega)$ large enough so that for all $\tau \geq T$,
\begin{equation}\label{eq:2unif}
\abs{A_\tau(\eta_{\gamma_n}*f_n)^{\SV}(\omega) - \int_{\C^2} f_n \, dm} < \epsilon.
\end{equation}
Now we use Equation~\eqref{eq:0unif} and the fact that the functions have support in $H_1$ to see that 
\begin{equation}\label{eq:3unif}
	\abs{\int_{\C^2} f_n\,dm - \int_{\C^2} \varphi\,dm} \leq \epsilon \, m(H_1).
\end{equation}

\noindent By the triangle inequality combined with Equations~\eqref{eq:0.5unif},
\eqref{eq:1unif},
 \eqref{eq:2unif}, \eqref{eq:3unif} we conclude that for almost every $\omega$ and each $n$, there is $T= T(\varphi,\omega)$ so that for all $\tau \geq T$,
\begin{equation} \label{eq:gnconv0}
\abs{A_{\tau} \widehat{\varphi} (\omega)- \int_{\C^2} \varphi \,d m} \leq 2\epsilon[\epsilon+ m(H_1)] + \epsilon + \epsilon m(H_1).
\end{equation}

\noindent Since $m(H)$ and $m(H_1)$ are fixed constants and $\epsilon$ is arbitrary  we conclude for almost every $\omega$,
\begin{equation}\label{eq:gnconv}
	\lim_{\tau\to\infty}  A_\tau\widehat{\varphi}(\omega) = \int_{\C^2} \varphi \,dm,
\end{equation}
\noindent concluding the proof of Proposition~\ref{prop:convergescont}.

\end{proof}
\section{Approximation function and properties}\label{sec:approx}  \paragraph*{\bf Constructing $h_A$} We now construct the function $h_A$ satisfying (\ref{eq:circle}).

\subsection{Fibered sets}\label{sec:fibered}

Fix $A>0$. Given $z \in \C$, define the approximating parallelogram $$R_A(z) = \{w \in \C: |w \wedge z| \le A, |\im w| \le |\im z|\}$$ which we will use to approximate the desired set $$D_A(z) =  \{w \in \C: |w \wedge z| \le A, |w| \le |z|\}.$$

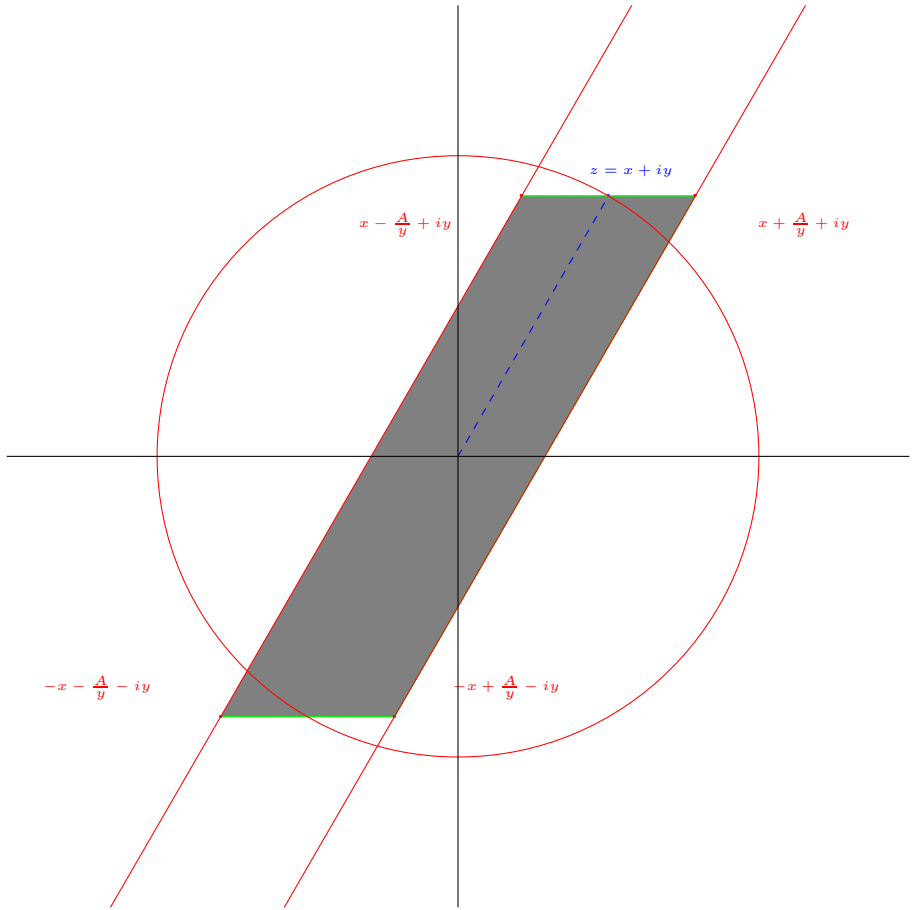
\begin{figure}[h!]\caption{The sets $R_A(z)$ and $D_A(z)$. $R_A(z)$ is the shaded parallelogram, and $D_A(z)$ is the region inside the red circle bounded by the two parallel red lines.  This is the picture for $x>A/y$, and there is a corresponding figure for $x<A/y$.\medskip}\label{fig:rada}

\begin{tikzpicture}[scale = 2.0]

\filldraw[gray](-1.577, -1.732)node[red]{$\cdot$}--(-.423, -1.732)node[red]{$\cdot$}--(1.577,1.732)node[red]{$\cdot$}--(.423, 1.732)node[red]{$\cdot$};

\draw[dashed, blue](0,0)--(1, 1.732)node[blue]{$\cdot$};

\node at (1.15,1.8)[blue, right, above]{\Tiny $z= x+iy$};

\draw[green](-1.577, -1.732)node[red]{$\cdot$}--(-.423, -1.732)node[red]{$\cdot$}--(1.577,1.732)node[red]{$\cdot$}--(.423, 1.732)node[red]{$\cdot$};

\draw[red](-1.155, -3)--(2.31, 3);

\draw[red](-2.31, -3)--(1.155, 3);

\draw[red](0,0) circle (2);

\node at (2.3,1.4)[red, right, above]{\Tiny $x+ \frac{A}{y} +iy$};

\node at (-.35,1.4)[red, left, above]{\Tiny $x- \frac{A}{y}+iy$};

\node at (-2.4,-1.4)[red, left, below]{\Tiny $-x- \frac{A}{y} -iy$};

 \node at (.32, -1.4)[red, right, below]{\Tiny $-x+ \frac{A}{y}-iy$};

\draw (-3,0)--(3,0);

\draw (0, -3)--(0, 3);

\end{tikzpicture}
\end{figure}

\paragraph*{\bf The fibered parallelogram} Given $S \subset \C$, we define the \textit{fibered} parallelogram and desired set by \begin{align*}R_A(S) &= \{(z, w) \in \C^2: z \in S, w \in R_A(z)\}\text{  and  } D_A(S) = \{(z, w) \in \C^2: z \in S, w \in D_A(z)\}. \end{align*}

\paragraph*{\bf Equivariance} Note that for $z=x+iy$, $R_A(z)$ is a parallelogram with vertices $x \pm \frac{A}{y} \pm  iy$ (see Figure~\ref{fig:rada}). From this observation, we have $g_t$-equivariance of $R_A$: for $t \in \R$, $$g_t(R_A(z)) = R_A(g_t z),$$ so for $S \subset \C$, $$g_t(R_A(S)) = R_A(g_t S).$$ We will be particularly interested in two families of fibered sets. First, define the trapezoid $\T$ by

$$\T = \left\{z = x+iy \in \C:  \frac{1}{2} \leq y \leq 1, \, |x|\leq y \right\}$$
We set $h_A = \one_{R_A(\T)}$ to be the indicator function of the fibered set $R_A(\T)$. Next, for $R_2 > R_1> 0$ set \begin{align*} B(R_1) &= B(0, R_1) = \{z \in \C: |z| < R_1\} \\ A(R_1, R_2) & = B(R_2) \backslash B(R_1) = \{z \in \C: R_1 < |z| < R_2\}\end{align*}
	
	 \noindent We define \begin{align*} D_A(R) &= D_A(B(R)) \\ D_A(R_1, R_2) &= D_A(A(R_1, R_2)).\end{align*} We have $N_A(\omega, R) = (\one_{D_A(R)})^{\SV}(\omega)$ and define $N_A^*(\omega, R) =  (\one_{D_A(R/2, R)})^{\SV}(\omega)$. Our main goal is to prove \begin{Theorem}\label{thm:error}
	\begin{equation} \label{eq:approx}
	\left|N_A^*(\omega, e^t) - \pi e^{2t}\left(A_t \widehat{h_A}\right)(\omega) \right| = o(e^{2t}).
	\end{equation}
	 \end{Theorem}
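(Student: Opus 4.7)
The plan is to unfold $(A_t \widehat{h_A})(\omega)$ as a sum over pairs of saddle connections, analyze each pair's contribution geometrically, and then reduce the theorem to a collection of boundary estimates that can be handled by the tools assembled so far.

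\textbf{Unfolding.} By the $SL(2,\R)$-equivariance of $\omega\mapsto\La_\omega$ together with the compact support of $h_A$,
$$\pi e^{2t}\bigl(A_t\widehat{h_A}\bigr)(\omega)\;=\;\frac{e^{2t}}{2}\sum_{(z,w)\in\La_\omega^2}\int_0^{2\pi}\one_{R_A(\T)}(g_t r_\theta z,\,g_t r_\theta w)\,d\theta.$$
For a single pair $(z,w)$ with $z=re^{i\psi}$ and $w=se^{i\phi}$, change variables $\alpha=\psi+\theta$. The condition $g_t r_\theta z\in\T$ becomes $\sin\alpha\in[e^t/(2r),\,e^t/r]$ together with $|\cos\alpha|\le e^{-2t}\sin\alpha$, and for $r$ strictly inside $(e^t/2,e^t)$ this cuts out an arc of length $\approx 2e^{-2t}$ centered at $\alpha=\pi/2$, on which $g_t r_\theta z$ is close to $i$ up to $O(e^{-2t})$. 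On this arc the additional constraint $g_t r_\theta w\in R_A(g_t r_\theta z)$ combines the $SL(2,\R)$-invariant wedge bound $|z\wedge w|\le A$ with the imaginary-part bound on $w$, which to leading order forces $s\le r$. Multiplying the arc length by $e^{2t}/2$ then yields $\one_{D_A(e^t/2,\,e^t)}(z,w)$ up to a per-pair error $E(z,w,t)$.

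\textbf{Error decomposition.} The per-pair error $E(z,w,t)$ is supported on pairs lying in one of a fixed list of boundary regions: (a) $|z|$ in a shell of width $o(e^t)$ about either $e^t/2$ or $e^t$, from the transitions at the endpoints of the $r$-window; (b) $|z\wedge w|$ in a narrow window about $A$, from the hard wedge cutoff; (c) pairs in the symmetric difference between $R_A(\T)$ and $D_A(\T)$, where the parallelogram and disk disagree (i.e.\ $w$ is nearly parallel or nearly perpendicular to $z$); and (d) small corrections from the $\sin\alpha$ factor in the $r$-window. Summing over $(z,w)\in\La_\omega^2$ gives an inequality of the shape previewed in (\ref{eq:decomp}),
$$\bigl|N_A^*(\omega,e^t)-\pi e^{2t}(A_t\widehat{h_A})(\omega)\bigr|\;\le\;\sum_j \pi e^{2t}\bigl(A_t\widehat{\one_{E_j}}\bigr)(\omega),$$
where each $E_j\subset\C^2$ is a fibered, $g_t$-equivariant set whose $t$-slice is one of the boundary regions above.

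\textbf{Controlling the errors.} The main obstacle is to show each term on the right is $o(e^{2t})$ almost surely. The shell and wedge-window errors of types (a), (b), (d) are handled by Proposition~\ref{prop:quadratic}: the almost-sure quadratic upper bound $N_A(\omega,R)\le CR^2$ gives $O(\epsilon\,e^{2t})$ pairs in a shell of relative width $\epsilon$, and the circle averages of the corresponding indicator transforms go to zero as $\epsilon\to 0$. The more delicate type-(c) errors from nearly-parallel and nearly-perpendicular configurations are controlled by Proposition~\ref{prop:Mjbounds}, which promotes the naive volume estimate to a strictly sub-quadratic almost-sure bound. Both arguments rely on the $L^{1+\kappa}$-integrability of the two-variable Siegel--Veech transform from \cite{AthreyaCheungMasur19}, on Nevo's theorem (Theorem~\ref{thm:Nevo}), and on the convergence of circle averages from Proposition~\ref{prop:convergescont} applied to continuous approximations of $\one_{E_j}$ (where the latter approximations must be chosen compatibly with the fibered, $K$-finite setup so that Nevo applies). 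Assembling these bounds in (\ref{eq:decomp}) and letting $t\to\infty$ yields (\ref{eq:approx}).
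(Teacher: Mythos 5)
Your high-level strategy — unfold the circle average as a sum over pairs, observe that for generic $(z,w)$ the integral gives $e^{-2t}\one_{D_A(e^t/2,e^t)}$ exactly up to corrections, and then bound the contributions from "boundary" configurations — is indeed the paper's strategy, and the reduction to an inequality of the shape (\ref{eq:decomp}) is correct. However, two of your individual error claims do not hold up, and the tool assignment is off in a way that hides the real difficulty.

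First, the wedge-window error (b) is spurious. Both $R_A(z)$ and $D_A(z)$ use the \emph{same} cutoff $|z\wedge w|\le A$, and this quantity is $SL(2,\R)$-invariant, so the hard wedge cutoff is perfectly matched between $\one_{D_A(e^t/2,e^t)}$ and $h_A$; there is no boundary layer in the $A$-direction contributing to the error of Theorem~\ref{thm:error}. (The condition "virtual area close to $A$" does appear later, in Lemma~\ref{lem:bad}, but it serves the separate purpose of bounding measure near $\partial R_A(\T)$ for Lemma~\ref{lem:smallboundary}, not for the decomposition here.) Likewise, the "nearly perpendicular" part of (c) is not present: for $z$ rotated to vertical, the symmetric difference $R_A(z)\triangle D_A(z)$ is concentrated at the \emph{corners} of the parallelogram, where $w$ is nearly parallel to $z$, and the paper captures exactly this in $E_t^2$ and $E_t^4$.

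Second, and more seriously, you propose that Proposition~\ref{prop:quadratic} (the almost-sure $N_A(\omega,R)\le CR^2$ bound) disposes of the shell errors (a) because a shell of "relative width $\epsilon$" should contain only $O(\epsilon e^{2t})$ pairs. This does not work for the shells that actually arise: the bottom-of-trapezoid error set $E_t^1 = D_A\bigl(e^t/2,\sqrt{\cosh(2t)/2}\bigr)$ corresponds to an annulus of width $O(e^{-3t})$ (relative width $O(e^{-4t})$), and similarly for the top. An \emph{upper} quadratic bound cannot rule out that an $O(e^{-3t})$-thin annulus contains a full quadratic share of pairs; you would need a two-sided quantitative count, which is not available pointwise. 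This is exactly why the paper does not reduce $E_t^1$ and $E_t^3$ to quadratic growth. Instead, Proposition~\ref{prop:Mjbounds} (which is the substance of the proof of Theorem~\ref{thm:error}, not a pre-existing tool applicable only to error type (c)) shows \emph{all} of $|m_t|$ and $|e_t^i|$ are $o(e^{2t})$, and does so by showing that pairs in $E_t^i$ force $g_t r_\theta\omega$ into a measurably-small "bad set" $\Omega(\widehat\epsilon,\epsilon',L,L')$ (Lemma~\ref{lem:bad}, proved via Delaunay triangulations and period coordinates), then using Nevo's theorem to bound how often a circle average hits that set, together with a careful separate analysis of the thin part of the stratum (the decomposition into $F(j), G(j), H(j)$ in \S\ref{sec:proof4.2}, which your proposal does not mention at all). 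The only place Proposition~\ref{prop:quadratic} is actually invoked in the proof of Proposition~\ref{prop:Mjbounds} is to bound the main term $m_t$, where the per-pair correction is $O(e^{-4t})$ and the quadratic upper bound suffices.

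So while the skeleton of the argument is right, filling it in as you propose would stall at the shell errors, and the essential machinery — measure bounds near $\partial R_A(\T)$ (Lemma~\ref{lem:bad}), Nevo applied to $K$-finite approximations of indicators of these bad sets, and the thin-part decomposition — is missing.
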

	\noindent In order to obtain Theorem~\ref{thm:error}, we will need to use quadratic upper bounds to control error terms.
	\begin{Prop}
\label{prop:quadratic}
Given $A>0$, there exists $C$ such that for a.e. $(X,\omega)$ there exists $T>0$ such that for all $t>T$,  
$$N_A^*(\omega, e^t) \leq C e^{2t}.$$
\end{Prop}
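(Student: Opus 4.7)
The plan is to dominate $N_A^*(\omega, e^t)$ by a constant multiple of $e^{2t}$ times the circle average of a rotation--invariant, $K$--finite Siegel--Veech transform in $L^{1+\kappa}(\hh, \mu)$, and then control that circle average almost surely via Proposition~\ref{prop:convergescont}.

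Given a pair $(z, w) \in D_A(e^t/2, e^t)$, write $z = r e^{i\phi}$ with $r \in [e^t/2, e^t]$ and set $\psi = \phi + \theta$. The condition $|g_t r_\theta z| \leq \sqrt 2$ reads $r^2 e^{2t}\cos^2\psi + r^2 e^{-2t}\sin^2\psi \leq 2$ and, after discarding the negligible second term, reduces to $|\cos \psi| \leq 2\sqrt 2 \, e^{-2t}$, which defines a $\theta$--window of Lebesgue measure at least $c_1 e^{-2t}$ around $\psi = \pm \pi/2$ for some absolute $c_1 > 0$. For $\theta$ in this window I claim $|g_t r_\theta w| \leq \sqrt 2 + 2A$. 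Indeed, decomposing $w = \alpha z + \beta(iz/|z|)$ in $\R$--coordinates, the hypotheses $|w| \leq |z|$ and $|z \wedge w|\leq A$ give $|\alpha|\leq 1$ and $|\beta|\leq A/|z|\leq 2Ae^{-t}$, so since $SL(2, \R)$ preserves the wedge and $\|g_t r_\theta\|_{\mathrm{op}} = e^t$, one obtains $|g_t r_\theta w|\leq |g_t r_\theta z| + 2A \leq \sqrt 2 + 2A$.

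Setting $K = \overline{B(0, \sqrt 2)} \times \overline{B(0, \sqrt 2 + 2A)}$, a rotation--invariant product, and summing the resulting pointwise bound over $(z,w) \in \Lambda_\omega^2$ yields
\begin{equation*}
A_t \widehat{\one_K}(\omega) \;\geq\; \frac{c_1}{2\pi}\, e^{-2t}\, N_A^*(\omega, e^t).
\end{equation*}
The transform $\widehat{\one_K}(\omega) = N(\omega, \sqrt 2)\, N(\omega, \sqrt 2 + 2A)$ is rotation--invariant on $\hh$ (hence $K$--finite) and lies in $L^{1+\kappa}(\hh, \mu)$ by the Athreya--Cheung--Masur $L^{2+2\kappa}$--bound combined with H\"older. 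Dominating $\one_K$ from above by a continuous compactly supported $\psi$ whose support avoids the coordinate axes of $\C^2$, Proposition~\ref{prop:convergescont} yields $A_t \widehat\psi(\omega) \to \int \psi \, dm < \infty$ for $\mu$--a.e.\ $\omega$, whence $A_t \widehat{\one_K}(\omega) \leq A_t \widehat \psi(\omega) \leq C'(\omega)$ for all $t \geq T(\omega)$, and finally $N_A^*(\omega, e^t) \leq (2\pi/c_1)\, C'(\omega)\, e^{2t}$.

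The principal obstacle is reconciling the pointwise domination of $\one_K$ with the hypotheses of Proposition~\ref{prop:convergescont}: the set $K$ contains axis points $(z, 0)$ and $(0, w)$, so any continuous pointwise majorant must also have support meeting these axes. Since the Siegel--Veech transform ignores the axes (as $\Lambda_\omega$ avoids the origin), this is morally harmless, but making it rigorous requires separately bounding the contribution from pairs where one of $g_t r_\theta z$ or $g_t r_\theta w$ becomes very short along the orbit. This is essentially a thick/thin--part analysis, and is the reason the upper bound proof is developed in Section~\ref{sec:upperbound} in parallel with the asymptotic.
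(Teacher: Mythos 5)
Your pointwise domination step is correct and cleanly computed: for each $(z,w)\in D_A(e^t/2,e^t)$, the $\theta$-arc on which $|g_tr_\theta z|\leq\sqrt2$ has length at least $c_1e^{-2t}$, the $SL(2,\R)$-invariance of $z\wedge w$ together with $|w|\leq|z|$ forces $|g_tr_\theta w|\leq\sqrt2+2A$ on that arc, and summing over $\Lambda_\omega^2$ gives $A_t\widehat{\one_K}(\omega)\geq \frac{c_1}{2\pi}e^{-2t}N_A^*(\omega,e^t)$. So it remains only to bound $A_t\widehat{\one_K}(\omega)$ independently of $t$ for $t$ large, and you have correctly identified that this is where the argument is incomplete.

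The gap is not, however, ``morally harmless'' or a small fix. You cannot excise the axes from $K$: for a fixed pair $(z,w)$ with $w$ nearly parallel to $z$, the quantity $|g_tr_\theta w|$ degenerates to $0$ over most of the good arc exactly when $g_tr_\theta\omega$ enters the thin part, and the content of bounding that degeneration is precisely the content of the Proposition. More concretely, the convergence you want --- $A_t\widehat{\one_K}(\omega)\to\int\widehat{\one_K}\,d\mu$ for a.e.\ $\omega$ and for an indicator function of a compact set touching the axes --- is essentially the statement of Proposition~\ref{prop:hAlimit} (there for $h_A=\one_{R_A(\T)}$). In the paper that proposition rests on Lemma~\ref{lem:smallboundary}, whose proof (Section~\ref{sec:integralbounds}) requires exactly the thin-part machinery (Masur--Smillie measure bounds Lemma~\ref{lem:MS}, the systolic count Lemma~\ref{lem:EM}, the cylinder count Lemma~\ref{lem:ACM}, and the splitting into $F(j),G(j),H(j)$) that is developed in the proof of Proposition~\ref{prop:quadratic} itself. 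So invoking such a convergence to prove Proposition~\ref{prop:quadratic} is, at best, proving the hard half of it by hand in a different order.

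The paper's own proof also dominates the pair count by a circle average, but in discretized form: it cuts $[0,2\pi)$ into $\lfloor e^{2t}\rfloor$ intervals $I(\theta_i)$ of length $\asymp e^{-2t}$ and, using Lemma~\ref{lem:expand} to control length distortion within each interval, reduces to counting pairs of bounded-length saddle connections on each surface $g_tr_{\theta_i}\omega$. This replaces ``bound $A_t\widehat{\one_K}$'' by ``bound, for each $i$, the number of bounded-length pairs on $g_tr_{\theta_i}\omega$, and sum.'' In the thick case this is $O(1)$ per interval. In the thin case the paper subdivides into $F(j)$ (shortest and second-shortest both short), $G(j)$ (short shortest, not-too-short second), and $H(j)$ (shortest bounds a thin cylinder), bounds the \emph{number of intervals} $N_j^F,N_j^G,N_j^H$ using Nevo's theorem applied to rotation-invariant indicators of thin sets (whose $\mu$-measures it controls via Lemma~\ref{lem:MS}), and then bounds pairs per such interval using Lemmas~\ref{lem:EM} and~\ref{lem:ACM}. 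The geometric series in $j$ then closes the argument. The discretization is what lets the proof go through without ever needing convergence of circle averages of discontinuous Siegel--Veech transforms --- it sidesteps your obstruction rather than overcoming it directly, and this is where the actual work lives.
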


\noindent We will also need the following proposition.  
 \begin{Prop}\label{prop:hAlimit}
	For almost every $\omega$,
	$$\lim_{t\to\infty} (A_t \widehat{h_A})(\omega)  = \int_{\mathcal{H}} \widehat{h_A} \,d\mu.$$
\end{Prop}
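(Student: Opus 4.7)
The strategy is to apply Proposition~\ref{prop:convergescont} to continuous approximations of $h_A = \one_{R_A(\T)}$, handling a key technical obstruction: for every $z \in \T$, the fiber $R_A(z)$ contains the origin, so $R_A(\T)$ meets the degenerate locus $\{w = 0\}$, which Proposition~\ref{prop:convergescont} forbids. However, since $\Lambda_\omega \subset \C \setminus \{0\}$, the Siegel--Veech transform $\widehat{h_A}$ is blind to modifications on $\{z = 0\} \cup \{w = 0\}$, and the defining property of the Siegel--Veech measure $m$ forces $m(\{z = 0\} \cup \{w = 0\}) = 0$. Moreover, $m(\partial R_A(\T)) = 0$: by~\eqref{eq:SV-measure}, the boundary is a finite union of smooth codimension-$1$ hypersurfaces in $\C^2 \cong \R^4$ and intersects each $3$-dimensional orbit $D_t$ ($t \neq 0$) in a Haar-null set, and each $2$-dimensional line orbit $L_s$ in a Lebesgue-null set.

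For $\delta > 0$, let $\chi_\delta \in C(\C, [0, 1])$ be $0$ on $\overline{B(0, \delta)}$ and $1$ off $B(0, 2\delta)$, and set $h_A^{(\delta)}(z, w) := h_A(z, w)\chi_\delta(w)$. The support of $h_A^{(\delta)}$ lies in $\{|z| \geq 1/2,\,|w| \geq \delta\}$, safely away from the degenerate loci. Since $m(\partial R_A(\T)) = 0$, one may sandwich $h_A^{(\delta)}$ between continuous compactly supported $\varphi^\pm_{\delta, \epsilon}$ with supports in $\{|z| > 1/4,\,|w| > \delta/2\}$ and $\int (\varphi^+_{\delta, \epsilon} - \varphi^-_{\delta, \epsilon})\, dm < \epsilon$. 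Proposition~\ref{prop:convergescont} applied to $\varphi^\pm_{\delta, \epsilon}$, combined with a sandwich as $\epsilon \to 0$, gives for $\mu$-a.e.\ $\omega$,
\[
\lim_{t \to \infty} A_t \widehat{h_A^{(\delta)}}(\omega) = \int_{\C^2} h_A^{(\delta)} \, dm.
\]

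The main remaining obstacle is to show that the error $A_t \widehat{(h_A - h_A^{(\delta)})}(\omega)$ can be made uniformly small in $t$ (for a.e.\ $\omega$) by taking $\delta$ small; this is the sort of estimate I expect to occupy the auxiliary lemma in \S\ref{sec:upperbound}. Bounding pointwise by $(h_A - h_A^{(\delta)})(z, w) \leq \one_{B(0, R_0)}(z)\,\one_{B(0, 2\delta)}(w)$ (with $R_0 \geq \sup_{z \in \T} |z|$) gives $\widehat{(h_A - h_A^{(\delta)})}(\omega) \leq \widehat{\one_{B(0, R_0)}}(\omega)\,\widehat{\one_{B(0, 2\delta)}}(\omega)$, and Cauchy--Schwarz in $d\theta$ yields
\[
A_t \widehat{(h_A - h_A^{(\delta)})}(\omega) \leq \sqrt{A_t \bigl(\widehat{\one_{B(0, R_0)}}\bigr)^2(\omega)}\cdot\sqrt{A_t \bigl(\widehat{\one_{B(0, 2\delta)}}\bigr)^2(\omega)}.
\]
Each factor under the square root is the circle average of $\widehat{\one_{B(0, r) \times B(0, r)}} \in L^{1+\kappa}(\mu)$ by~\cite[Theorem 3.4]{AthreyaCheungMasur19}. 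Repeating the cutoff-and-approximation argument above for these simpler indicators (whose boundaries trivially satisfy the $m$-null condition) gives $\lim_t A_t (\widehat{\one_{B(0, r)}})^2(\omega) = \int (\widehat{\one_{B(0, r)}})^2\, d\mu$ for a.e.\ $\omega$; the right-hand side tends to $0$ as $r = 2\delta \to 0$ by dominated convergence (since $\widehat{\one_{B(0, r)}}(\omega) \to 0$ pointwise because $\Lambda_\omega$ is discrete and avoids $0$). A standard sandwich, together with $\int h_A^{(\delta)}\, dm \to \int h_A\, dm$ as $\delta \to 0$ (monotone convergence, using $m(\{w=0\})=0$), concludes by invoking $\int h_A\, dm = \int_\hh \widehat{h_A}\, d\mu$ from Section~\ref{sec:svmeasures}.
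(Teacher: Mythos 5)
Your overall strategy is genuinely different from the paper's. The paper's route is Lemma~\ref{lem:smallboundary}: construct a single continuous dominating function $\phi$ supported near $\partial R_A(\T)$ and bound $A_\tau\widehat{\phi}$ by splitting into the thin part of the stratum (via Lemma~\ref{lem:integralthin}, which uses the three-family exhaustion $F(j),G(j),H(j)$) and the thick part (via the period-coordinate measure estimate Lemma~\ref{lem:bad}). Your factorization via Cauchy--Schwarz in $\theta$ cleverly sidesteps that decomposition: the ``short $w$'' factor $A_t(\widehat{\one_{B(0,2\delta)}})^2$ is your substitute for the thin-part analysis, and the boundedness of $A_t(\widehat{\one_{B(0,R_0)}})^2$ is your substitute for the thick-part bound. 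If it works, it is substantially shorter.

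There is, however, a genuine gap at the key step. You assert that ``repeating the cutoff-and-approximation argument above for these simpler indicators (whose boundaries trivially satisfy the $m$-null condition) gives $\lim_t A_t(\widehat{\one_{B(0,r)}})^2(\omega) = \int(\widehat{\one_{B(0,r)}})^2\,d\mu$.'' But the support of $\one_{B(0,r)^2}$ contains the degenerate loci $\{z=0\}\cup\{w=0\}$ --- precisely the obstruction you started with. Applying your own cutoff to $\one_{B(0,r)^2}$ leaves a residual supported near those loci whose circle average you must again estimate; the argument is circular (or an infinite regress). The issue is not the $m$-nullity of the boundary but the support condition in Proposition~\ref{prop:convergescont}, which is needed because its proof builds the Stone--Weierstrass family on products of annuli $\overline{A(r_0,r_1)}^2$ with $r_0>0$, hence bounded away from zero.

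The gap can be closed, but by a different mechanism than you invoke: bypass Proposition~\ref{prop:convergescont} and apply Theorem~\ref{thm:Nevo} directly. Since $\one_{B(0,r)^2}$ is rotation-invariant, $\widehat{\one_{B(0,r)^2}}=(\widehat{\one_{B(0,r)}})^2$ is literally $r_\theta$-invariant, hence trivially $K$-finite, and it lies in $L^{1+\kappa}(\mu)$ by~\cite[Theorem 3.4]{AthreyaCheungMasur19}. Combined with the smoothing domination $\one_{B(0,r)^2}\leq \eta_{\gamma_0}*\one_{B(0,2r)^2}$ for $\gamma_0<\log 2$ (the same device as~\eqref{eq:convolve}), Nevo's theorem gives, for $\mu$-a.e.\ $\omega$, $\limsup_t A_t\widehat{\one_{B(0,r)^2}}(\omega)\leq m\!\left(B(0,2r)^2\right)$. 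Only this upper bound is needed --- the two-sided limit you claim is neither necessary nor obviously available --- and it tends to $0$ with $r$ by dominated convergence as you note. With this substitution your Cauchy--Schwarz estimate goes through, and the argument does yield Proposition~\ref{prop:hAlimit} by a shorter route than the paper's. One caveat in comparing: Lemmas~\ref{lem:integralthin} and~\ref{lem:bad}, which drive the paper's Lemma~\ref{lem:smallboundary}, are also load-bearing elsewhere (notably in the proofs of Proposition~\ref{prop:quadratic} and Proposition~\ref{prop:Mjbounds}), so the savings are less global than they appear when looking at this proposition in isolation.
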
	
\noindent To prove this proposition, we will apply  Proposition~\ref{prop:convergescont} where we proved this for continuous $\phi$. We will use the following lemma to construct a sufficiently good approximating function $g_\epsilon$ which is independent of the time $\tau$ of flow under the circle averages. 
\begin{lemma}\label{lem:smallboundary}
	For all $\epsilon$ there exists a function $g_\epsilon \in C_c(\C^2)$ such that for $\mu$-a.e $\omega$ there is $T \geq 0$ so that for all $\tau \geq T$, 
	\begin{equation} \label{eq:geps}
		\abs{A_\tau(\widehat{g_\epsilon} - \widehat{h_A})(\omega)} < \epsilon\quad \text{and}\quad \abs{\int_{\mathcal{H}} \widehat{g_\epsilon} - \widehat{h_A} \,d\mu} < \epsilon.
\end{equation}
\end{lemma}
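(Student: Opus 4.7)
The approach is to sandwich the indicator $h_A = \one_{R_A(\T)}$ between two continuous compactly supported functions $g_\epsilon^- \le h_A \le g_\epsilon^+$, and then set $g_\epsilon := g_\epsilon^+$. Both quantities in \eqref{eq:geps} are then controlled by the Siegel--Veech transform of the difference: pointwise on $\hh$ we have $0 \le \widehat{g_\epsilon} - \widehat{h_A} \le \widehat{g_\epsilon^+-g_\epsilon^-}$, and analogously for integrals against $\mu$. Concretely I would take $g_\epsilon^+$ equal to $1$ on $R_A(\T)$ and vanishing outside a $\delta$-thickening, and $g_\epsilon^-$ equal to $1$ on the $\delta$-interior of $R_A(\T)$ and vanishing outside $R_A(\T)$.

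\textbf{The two bounds.} The extension of the Siegel--Veech formula to semi-continuous integrands discussed in \S\ref{sec:svmeasures} gives
\[
\int_\hh \widehat{g_\epsilon^+ - g_\epsilon^-}\, d\mu \;=\; \int_{\C^2}(g_\epsilon^+ - g_\epsilon^-)\, dm.
\]
Since $g_\epsilon^+ - g_\epsilon^-$ is supported in a $\delta$-tubular neighborhood of the piecewise smooth codimension-one set $\partial R_A(\T)$, this integral is $O(\delta)$ once one estimates the surface measures on the pieces of $\partial R_A(\T)$ intersecting each orbit $D_t$ and $L_s$ in the decomposition \eqref{eq:SV-measure}; choosing $\delta$ small forces it below $\epsilon$. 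For the pointwise bound, since $g_\epsilon^+ - g_\epsilon^- \in C_c(\C^2)$, Proposition~\ref{prop:convergescont} would yield for $\mu$-a.e.\ $\om$
\[
\lim_{\tau \to \infty} A_\tau \widehat{g_\epsilon^+ - g_\epsilon^-}(\om) \;=\; \int_{\C^2}(g_\epsilon^+ - g_\epsilon^-)\, dm \;<\; \epsilon,
\]
producing the required threshold $T = T(\om)$.

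\textbf{Main obstacle.} The principal difficulty is verifying the support hypothesis of Proposition~\ref{prop:convergescont}, namely that the support of $g_\epsilon^+ - g_\epsilon^-$ miss the axes $\{(z,0)\}$ and $\{(0,w)\}$. The axis $\{z=0\}$ is automatic because the trapezoid $\T \subset \{\im z \ge 1/2\}$ is bounded away from the origin, so for small $\delta$ every thickening of $R_A(\T)$ is also bounded away from $z=0$. The axis $\{w=0\}$, however, passes through the interior of $R_A(\T)$ (since $0 \in R_A(z)$ for every $z \in \T$), and it meets $\partial R_A(\T)$ precisely along $\{(z,0) : z \in \partial \T\}$; under the naive construction these points would lie in the support of the difference.

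\textbf{Resolution.} To repair this, I would modify the construction so that $g_\epsilon^+$ and $g_\epsilon^-$ coincide on a narrow slab $\{|w| < \eta\}$. Choose $\eta$ small enough that the slab is contained in $R_A(z)$ for every $z$ in a slight enlargement of $\T$ (this is possible since $0 \in \mathrm{int}\,R_A(z)$ with uniform bounds as $z$ ranges over a compact neighborhood of $\T$), and fix a cutoff $\psi \in C_c(\C)$ equal to $1$ on that enlargement. Redefine $g_\epsilon^\pm(z,w) := \psi(z)$ on $\{|w| < \eta/2\}$ and interpolate smoothly in $\eta/2 \le |w| \le \eta$ to the naive thickening/shrinking construction outside the slab. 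Then $g_\epsilon^+ - g_\epsilon^-$ vanishes on $\{|w| < \eta/2\}$, so its closed support is uniformly bounded away from $\{w=0\}$, while outside the slab the difference is unchanged. With $\eta$ fixed, the integral bound remains $O(\delta)$, Proposition~\ref{prop:convergescont} now applies, and both estimates in \eqref{eq:geps} follow.
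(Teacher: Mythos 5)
Your sandwiching framework is the right skeleton, and your worry about the closed support of $g_\epsilon^+-g_\epsilon^-$ hitting $\{w=0\}$ is a genuine point (the paper handles it by enlarging the neighborhood of $\partial R_A(\T)$ to a product of annuli, which is essentially your slab trick). But the core difficulty of the lemma is hidden in the single sentence where you claim $\int_{\C^2}(g_\epsilon^+ - g_\epsilon^-)\,dm = O(\delta)$ because the support is a $\delta$-tube around $\partial R_A(\T)$. That estimate is not routine and is in fact not obtainable from the decomposition \eqref{eq:SV-measure} alone. The Siegel--Veech measure $m$ is not Lebesgue on $\C^2$: it is a superposition of Haar measures on the orbits $D_t$ weighted against an \emph{unknown} measure $\nu$ on $\R\setminus\{0\}$, plus Lebesgue on the lines $L_s$ weighted against an \emph{unknown} measure $\rho$ on $\mathbb{P}^1(\R)$. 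The paper says explicitly in \S\ref{sec:cA} that one does not currently know how to compute integrals of Siegel--Veech transforms of functions on $\C^2$, so there is no way to estimate $m$ of a tubular neighborhood directly.

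The paper's actual route is different precisely for this reason. It does apply Proposition~\ref{prop:convergescont} to a continuous bound $\phi$ supported in a neighborhood of $\partial R_A(\T)$, just as you do, to reduce to $\int_{\hh}\widehat{\phi}\,d\mu$. But it then estimates that integral by \emph{decomposing the stratum} $\hh$ into a thin part and a thick part, not by decomposing $\C^2$ via \eqref{eq:SV-measure}. The thin part of the integral is controlled by Lemma~\ref{lem:integralthin}, which in turn requires partitioning $\hh_\epsilon$ into the families $F(j),G(j),H(j)$ and invoking the Masur--Smillie measure estimate (Lemma~\ref{lem:MS}) together with the counting bounds of Lemmas~\ref{lem:ACM} and~\ref{lem:EM}. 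The thick part is controlled by Lemma~\ref{lem:bad}, which gives a Lebesgue measure bound in Delaunay period coordinates on the set $\Omega(\widehat\epsilon,\epsilon',L,L')$ of thick surfaces having a pair of saddle connections close to the boundary conditions of $R_A(\T)$; one then has to trade off $\widehat\epsilon$ against $\epsilon'$ to make both contributions below $\epsilon$. All of that work is what the integral bound actually costs; your proposal leaves it open by treating it as a standard tube estimate that does not exist here.
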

We postpone the proof of this lemma until Section~\ref{sec:upperbound}. The reason for postponing is first that the proof requires careful analysis of the thin part of the stratum, which requires splitting into three subsets $F,G,H$ depending on the length of the shortest saddle connection and a second non-homologous saddle connection (See Lemma~\ref{lem:integralthin}). In the thick part of the stratum, we also need to develop measure bounds for $\epsilon$-small sets (See Lemma~\ref{lem:bad}). Finally we need to take care in the first inequality of \eqref{eq:geps} to make sure $g_\epsilon$ is independent of $\tau$.

\begin{proof} [Proof of Proposition~\ref{prop:hAlimit}]
	Let $\epsilon > 0$. By Lemma~\ref{lem:smallboundary}  we may choose a function $g_\epsilon \in \C_c(\C^2)$ so that  Equation~\ref{eq:geps} holds. 	  Now choose $T=T(\omega,\epsilon)$  so that by Proposition~\ref{prop:convergescont},
 for $\tau\geq T$, $$\left|A_\tau \widehat{g_\epsilon}(\omega) - \int_{\mathcal{H}} \widehat{g_\epsilon} \,d\mu\right| < \epsilon.$$	
	Thus by the triangle inequality
	$$\abs{A_\tau \widehat{h_A}(\omega) - \int_{\mathcal{H}} \widehat{h_A} \,d\mu}  \leq  \abs{A_\tau \widehat{h_A}(\omega) - A_\tau \widehat{g_\epsilon}(\omega)} + \abs{A_\tau \widehat{g_\epsilon}(\omega) - \int_{\mathcal{H}} \widehat{g_\epsilon} \,d\mu } + \abs{ \int_{\mathcal{H}} \widehat{g_\epsilon} - \widehat{h_A} \,d\mu } < 3\epsilon.$$
\end{proof}

\paragraph*{Proving our main result} We conclude this section by proving Theorem~\ref{theorem:virtual:ae} assuming Theorem~\ref{thm:error}, Proposition~\ref{prop:quadratic} and  Lemma~\ref{lem:smallboundary}. We note  Proposition~\ref{prop:hAlimit} has been  proved assuming that Lemma~\ref{lem:smallboundary} holds. The proofs  of Theorem~\ref{thm:error}, Proposition~\ref{prop:quadratic} and Lemma~\ref{lem:smallboundary} are contained in Section~\ref{sec:upperbound}

\begin{proof}[Proof of Theorem~\ref{theorem:virtual:ae}]  
Combining Theorem~\ref{thm:error} and Proposition~\ref{prop:hAlimit}
we have, that for $\mu$-almost every $\omega \in \hh$ \begin{equation} \label{eq:N_A^*}\lim_{t \rightarrow \infty}\frac{N_A^*(\omega, e^t)}{\pi e^{2t}} = c_0
\end{equation} where
$$c_0 =\int_{\R\minuszero} \left(\int_{SL(2, \R)} h_A(tz, w) d\lambda(z,w) \right) d\nu(t)+\int_{\mathbb P^1(\R)} \left(\int_{\C} h_A(z, sz) dx\right) d\rho(s).$$ 
Notice that $c_0$ only depends on $A$ and $(\mathcal{H}, \mu)$.

\paragraph*{\bf Geometric series} To extend to $N_A(\omega, e^t)$ we use a geometric series argument along with the dominated convergence theorem giving upper bounds via Proposition~\ref{prop:quadratic}. Specifically for each fixed $j$ setting $s = \frac{e^t}{2^j}$ and using Equation~\ref{eq:N_A^*}, we have pointwise convergence
$$\lim_{t\to\infty}\frac{N_A^*(\omega, \frac{e^t}{2^j})}{e^{2t}} = \lim_{s\to\infty} \frac{N_A^*(\omega, e^s)}{2^{2j}e^{2s}} = \frac{\pi c_0}{2^{2j}}. $$

\noindent We also have a dominating integrable function $$\frac{N_A^*(\omega, \frac{e^t}{2^j})}{e^{2t}} \leq \frac{c T^2}{2^{2j}},$$
where we are without loss of generality assuming $T>1$ is the constant from Proposition~\ref{prop:quadratic}. Namely for each $j$ whenever $e^t > T2^j$, Proposition~\ref{prop:quadratic} gives an upper bound of $C 2^{-2j} < cT^2 2^{-2j}$. If $e^t \leq T 2^j$, using quadratic upper bounds from Equation~\ref{eq:quadbounds}, 
$$\frac{N_A^*(\omega, \frac{e^t}{2^j})}{e^{2t}} \leq \frac{N(\omega,\frac{e^t}{2^j})^2}{e^{2t}} \leq c_2 \frac{e^{2t}}{2^{4j}} \leq c_2\frac{T^2}{2^{2j}}.$$

\noindent Therefore by the dominated convergence theorem and the fact that for each fixed $t$, the tail of the telescoping series gives $$\lim_{j\to\infty} N_A\left(\omega, \frac{e^t}{2^{j+1}}\right) = 0,$$ so
$$\lim_{t\to\infty} \frac{N_A(\omega, e^t)}{e^{2t}} = \lim_{t\to\infty} \sum_{j=0}^\infty \frac{N_A^*(\omega, \frac{e^t}{2^j})}{e^{2t}} = \sum_{j=0}^\infty \frac{\pi c_0}{2^{2j}} = \frac{4}{3} \pi c_0.$$
\end{proof}

\section{Counting and Errors}\label{sec:estimates}

 \paragraph*{\bf Error estimates} In this section we derive estimates that are necessary steps to prove Theorem~\ref{thm:error}. The techniques are elementary and independent of Section~\ref{sec:upperbound}. Note that (see Figure~\ref{fig:arcs}) the trapezoid $g_{-t}\mathcal{T}$ has vertices $$\pm \frac{e^{-t}}{2} + i \frac{e^{t}}{2},\quad \pm e^{-t}+ i e^t.$$ Given $z, w \in \C$, define \begin{align*} \Theta_t(z) &= \{\theta \in [0, 2\pi): g_t r_{\theta} z \in \T\} \\ \Theta_t(z, w) &=\{\theta \in [0, 2\pi): g_t r_{\theta}(z, w) \in R_A(\T)\}.\end{align*} For $f = \one_{\T} $ and $h = \one_{R_A(\T)}$, we have \begin{align*} |\Theta_t(z)| &= 2\pi (A_t f)(z) \\ |\Theta_t(z, w)| &= 2\pi(A_t h_A)(z, w)\end{align*}

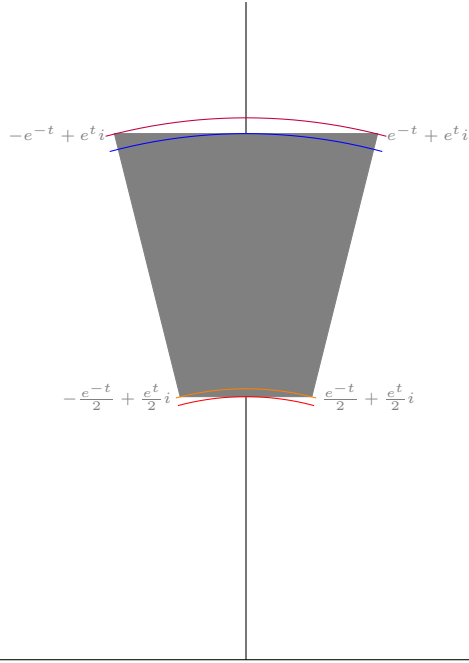
\begin{figure}[hb]\caption{Here we highlight the differences that occur when looking at the trapezoid $g_{-t}\mathcal{T}$ as compared to circle arcs that touch the trapezoid. In particular the blue and orange arcs illustrate the fact that for $z \in A\left( \sqrt{\frac{\cosh(2t)}{2}}, e^t\right)$, $|\Theta_t(z)| = 2 \arctan(e^{-2t})$. }\label{fig:arcs}

\centering
\begin{tikzpicture}[scale = 3.5]

\draw(-2,0)--(2,0);

\draw(0, 0)--(0, 2.5);

\filldraw[gray](1/4, 1)node[right]{\tiny $\frac{e^{-t}}{2} + \frac{e^t}{2} i$}--(1/2, 2)node[right]{\tiny $e^{-t}+ e^t i$}--(-1/2, 2)node[left]{\tiny $-e^{-t}+ e^t i$}--(-1/4, 1)node[left]{\tiny $-\frac{e^{-t}}{2} + \frac{e^t}{2} i$}--cycle;
   \draw [red,domain=75:105] plot ({cos(\x)}, {sin(\x)});
   \draw [blue,domain=75:105] plot ({2*cos(\x)}, {2*sin(\x)});
   \draw [orange,domain=75:105] plot ({(1.03)*cos(\x)}, {(1.03)*sin(\x)});
   \draw [purple,domain=75:105] plot ({(2.06)*cos(\x)}, {(2.06)*sin(\x)});

\end{tikzpicture}
\end{figure}

\begin{lemma}\label{lem:leq}
	For $t>0$ and $(z,w) \in \C^2$, 
	$$A_t(h_A(z, w)) \leq A_t(f(z)) \leq \frac{\arctan(e^{-2t})}{\pi}.$$
\end{lemma}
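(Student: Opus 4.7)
The plan is to reduce both inequalities to inclusions of angular subsets of $[0,2\pi)$ and then bound the latter by a direct geometric observation about the trapezoid $g_{-t}\T$.

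For the first inequality, I would begin from the definition of the fibered set: $R_A(\T) = \{(z',w') : z' \in \T,\ w' \in R_A(z')\}$, so in particular $R_A(\T) \subseteq \T \times \C$. Applied with $(z',w') = g_t r_\theta(z,w) = (g_t r_\theta z,\,g_t r_\theta w)$, this yields $\Theta_t(z,w) \subseteq \Theta_t(z)$, and dividing the Lebesgue measures of both sides by $2\pi$ gives $A_t h_A(z,w) \le A_t f(z)$.

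For the second inequality, the key observation is that the slanted sides of $g_{-t}\T$ pass through the origin. Indeed, the vertices listed above show that the right slanted edge connects $(e^{-t}/2,\,e^t/2)$ to $(e^{-t},\,e^t)$, both of which lie on the line $y = e^{2t}x$ through $0$; similarly for the left edge on $y = -e^{2t}x$. Consequently $g_{-t}\T$ is contained in the angular sector based at the origin between these two lines, namely the set of points $r e^{i\alpha}$ with $\alpha \in \bigl[\tfrac{\pi}{2} - \arctan(e^{-2t}),\ \tfrac{\pi}{2} + \arctan(e^{-2t})\bigr]$, a sector of angular measure $2\arctan(e^{-2t})$.

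Since $\theta \in \Theta_t(z)$ is equivalent to $r_\theta z \in g_{-t}\T$, and $r_\theta z$ traces (a single revolution of) the circle of radius $|z|$ as $\theta$ runs through $[0,2\pi)$, the set $\Theta_t(z)$ is a subset of $\{\theta : \arg(r_\theta z) \text{ lies in the above sector}\}$, and this latter set has Lebesgue measure at most $2\arctan(e^{-2t})$ (it is empty if $|z| < e^t/2$ and is a subinterval of the sector's angular extent otherwise). Therefore $|\Theta_t(z)| \le 2\arctan(e^{-2t})$, and dividing by $2\pi$ gives $A_t f(z) \le \arctan(e^{-2t})/\pi$. There is no real obstacle here; the lemma is a purely geometric consequence of the fact that the non-horizontal sides of the trapezoid $\T$ pass through the origin and hence the same is true of $g_{-t}\T$.
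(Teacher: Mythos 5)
Your proof is correct and takes essentially the same route as the paper: the first inequality comes from $h_A(z,w) = f(z)\one_{R_A(z)}(w)$ (equivalently your inclusion $R_A(\T) \subseteq \T \times \C$), and the second comes from observing that the slanted edges of $g_{-t}\T$ lie on lines through the origin, so $g_{-t}\T$ sits inside the angular sector $\bigl[\tfrac{\pi}{2}-\arctan(e^{-2t}),\ \tfrac{\pi}{2}+\arctan(e^{-2t})\bigr]$. You have merely spelled out the geometric justification that the paper summarizes as ``by our computation of the endpoints.''
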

\begin{proof} The first inequality follows from the fact that $h_A(z, w) = f(z)\one_{R_A(z)}(w).$ By our computation of the endpoints, $$g_{-t}\mathcal{T} \subset \left\{re^{i\theta} \in \C: \theta \in \left[\frac{\pi}{2} - \arctan(e^{-2t}), \frac{\pi}{2} + \arctan(e^{-2t})\right]\right\}.$$ Since $r_{\theta} z = e^{i\theta}z$, we have
	$$(A_t h_A)(z, w) \le (A_t f)(z) = \frac{1}{2\pi}|\Theta_t(z)| \leq \frac{\arctan(e^{-2t})}{\pi}.$$
	\end{proof}

\noindent Our next lemma captures the fact that for $(z,w)$ in a set that is only \emph{slightly} smaller than $D_A(e^t/2, e^t)$, we have that $(A_t h_A)(z,w)$ captures a fixed contribution of order $e^{-2t}$. These sets will be used to define the Main term as well as the remaining error terms also shows that the error set $E_t^4$ has finite volume, of which sketches can be seen Figure~\ref{fig:errors}.

\begin{lemma} \label{lem:equality} 
	For $t>0$ if 
	\begin{equation}\label{eq:lemassum}
 (z,w) \in D_A\left(\sqrt{\frac{\cosh(2t)}{2}}, e^t\right) \text{ and }|w| \leq \frac{|z|}{\sqrt{1+e^{-4t}}} 	\end{equation}
	 then
	\begin{equation}\label{eq:fullarc}(A_t h_A)(z,w) = \frac{\arctan(e^{-2t})}{\pi}.\end{equation} Moreover, for all $(z,w)$ such that $(A_t h_A)(z, w) > 0$, \begin{equation}\label{eq:lemmasum2} |w| \le \sqrt{1 + (8A+16A^2) e^{-4t}}|z|.\end{equation}
\end{lemma}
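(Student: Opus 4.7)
The plan is to exploit the $SL(2,\R)$-equivariance of the conditions defining $R_A(z)$, together with a careful description of the admissible arc $\Theta_t(z)$ for $z$ in the inner annulus. A direct computation shows
$$g_{-t}\T=\{x+iy:\ e^t/2\leq y\leq e^t,\ |x|\leq e^{-2t}y\},$$
so the slanted sides are the lines $y=\pm e^{2t}x$. The condition $e^{i\phi}\,r\in g_{-t}\T$ then forces $\phi\in[\pi/2-\arctan(e^{-2t}),\pi/2+\arctan(e^{-2t})]$ from the slanted sides together with $r\sin\phi\in[e^t/2,e^t]$ from the top and bottom. A short trigonometric computation using $\cos(\arctan u)=1/\sqrt{1+u^2}$ together with the identity $\tfrac{e^t}{2}\sqrt{1+e^{-4t}}=\sqrt{\cosh(2t)/2}$ shows that whenever $r\in[\sqrt{\cosh(2t)/2},e^t]$ the entire angular arc is admissible, so $|\Theta_t(z)|=2\arctan(e^{-2t})$ for such $z$.

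For \eqref{eq:fullarc} I would show that under hypothesis \eqref{eq:lemassum}, every $\theta\in\Theta_t(z)$ also satisfies $g_tr_\theta w\in R_A(g_tr_\theta z)$. Write $z'=g_tr_\theta z$ and $w'=g_tr_\theta w$. The wedge condition $|z'\wedge w'|\leq A$ is immediate by $SL(2,\R)$-invariance of the determinant: $|z'\wedge w'|=|z\wedge w|\leq A$. For the imaginary-part condition, $\im(g_tr_\theta u)=e^{-t}|u|\sin(\arg u+\theta)$ yields $|\im w'|\leq e^{-t}|w|$ for free. Meanwhile, for $\theta\in\Theta_t(z)$ the angle $\arg z+\theta$ lies in the admissible arc, on which $\sin$ is bounded below by $\cos(\arctan(e^{-2t}))=1/\sqrt{1+e^{-4t}}$, so $\im z'\geq e^{-t}|z|/\sqrt{1+e^{-4t}}$. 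The hypothesis $|w|\leq|z|/\sqrt{1+e^{-4t}}$ then gives $|\im w'|\leq\im z'$, so $w'\in R_A(z')$. Thus $h_A(g_tr_\theta(z,w))=1$ for all $\theta\in\Theta_t(z)$, producing $(A_th_A)(z,w)\geq|\Theta_t(z)|/(2\pi)=\arctan(e^{-2t})/\pi$, and equality follows from the matching upper bound in Lemma~\ref{lem:leq}.

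For \eqref{eq:lemmasum2} I would argue in reverse. Assume $(A_th_A)(z,w)>0$ and pick any $\theta$ with $z'=x'+iy'\in\T$ and $w'=u'+iv'\in R_A(z')$. Inverting $g_tr_\theta$ (and using that rotation preserves norms) gives $|z|^2=e^{-2t}(x')^2+e^{2t}(y')^2$ and $|w|^2=e^{-2t}(u')^2+e^{2t}(v')^2$. Since $|v'|\leq y'$, the difference $(v')^2-(y')^2$ is non-positive, so $|w|^2-|z|^2\leq e^{-2t}((u')^2-(x')^2)$. From $|x'v'-y'u'|\leq A$ we get $|u'|\leq(|x'v'|+A)/y'$; expanding $(u')^2-(x')^2$ and using $|x'|\leq y'$, $|v'|\leq y'$, and $y'\geq 1/2$ yields the clean bound $(u')^2-(x')^2\leq 2A+4A^2$. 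Combined with $|z|^2\geq e^{2t}(y')^2\geq e^{2t}/4$, this gives
$$|w|^2-|z|^2\leq(2A+4A^2)e^{-2t}\leq(8A+16A^2)e^{-4t}|z|^2,$$
which is \eqref{eq:lemmasum2}.

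The main delicate step is identifying the inner radius $\sqrt{\cosh(2t)/2}$ as precisely the threshold at which the bottom edge of $g_{-t}\T$ stops clipping the angular arc; once the identity $\tfrac{e^t}{2}\sqrt{1+e^{-4t}}=\sqrt{\cosh(2t)/2}$ is in hand, both halves of the lemma are bookkeeping. A secondary point worth emphasizing is that the two defining conditions of $R_A$ play distinct roles: the wedge condition is $SL(2,\R)$-invariant and handled for free, whereas the imaginary-part condition is what couples the bound on $|w|$ to the geometry of the arc.
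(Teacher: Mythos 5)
Your proof is correct and follows essentially the same route as the paper: both halves rest on the equivariance of the wedge condition, the identity $\tfrac{e^t}{2}\sqrt{1+e^{-4t}}=\sqrt{\cosh(2t)/2}$ to pin down the inner radius at which the arc $\Theta_t(z)$ attains its full width $2\arctan(e^{-2t})$, and the geometry of the parallelogram $R_A(\cdot)$ to control $|w|$. Your bookkeeping for the second conclusion (inverting $g_t$ and taking differences of squares in the primed coordinates) is cosmetically different from the paper's (which bounds $|r_\theta w|$ directly from the vertex formula $x\pm A/y\pm iy$ for $R_A(r_\theta z)$), but it produces the identical intermediate bound $(u')^2-(x')^2\le 2A+4A^2$ and the same final factor from $|z|\ge e^t/2$.
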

\begin{proof}	
	To show (\ref{eq:fullarc}),  we must show 
	$$|\Theta_t(z,w)| = 2\arctan(e^{-2t}).$$
Note that (see Figure~\ref{fig:arcs}) $$z \in A\left(\sqrt{\frac{\cosh(2t)}{2}}, e^t\right) \Longrightarrow |\Theta_t(z)| = 2\arctan(e^{-2t}).$$ 

\noindent\textbf{Bounds on $w$.} We now consider bounds on $w$. For any $\theta$, $$|r_{\theta}z \wedge r_{\theta}w| = |z \wedge w| \leq A.$$ For $\theta \in \Theta(z)$, we need to verify $r_{\theta} w \in R_A(r_{\theta} z)$. That is, we need to check that $$|\im(r_{\theta} w)| \le |\im(r_{\theta} z)|$$ Since $|w| \leq \frac{|z|}{\sqrt{1+e^{-4t}}}$, 
$$|\im(r_\theta w)| \leq \frac{|z|}{\sqrt{1+e^{-4t}}}.$$ 

\noindent We claim that for any $\theta \in \Theta_t(z)$, $$ \im(r_{\theta} z) \geq \frac{|z|}{\sqrt{1+e^{-4t}}}.$$ Indeed, $\im(r_{\theta} z)$ is minimized over  $\theta \in \Theta_t(z)$ when $\theta = \theta_0$ so that $r_{\theta_0} z$ is on (either) non-horizontal edge of the trapezoid, that is  $$r_{\theta_0} z= |z| e^{i \left(\pi/2 - \arctan\left(e^{-2t}\right)\right)},$$ Note that for $p =u+iv$ on either such edge, $|u| = e^{-2t} v$, so $$|p|= \sqrt{1+e^{-4t}}\im(p).$$ Thus 
$$\im(r_{\theta}z) \geq \im\left(|z|e^{i \left(\pi/2 - \arctan\left(e^{-2t}\right)\right)}\right) = \frac{|z|}{\sqrt{1 + e^{-4t}}}$$ as desired.  To show (\ref{eq:lemmasum2}), consider $\theta$ so that $g_{t} r_{\theta}(z,w) \in R_A(\T)$, that is $$r_{\theta} z = x+iy \in g_{-t} \T.$$ Thus $$e^t/2 \le |z| = |r_{\theta} z|  \le \sqrt{2\cosh(2t)}$$ and $$|x|/y \le e^{-2t} \mbox{ and }  \frac{e^t}{2} \le y \le e^t. $$ Since $r_{\theta} w \in R_A(z),$ 

\begin{align*} |w| =  |r_{\theta} w| & \le \sqrt{\left(|x| + \frac A y \right)^2 + y^2}\\ &= \sqrt{|z|^2 + 2\frac{A|x|}{y} + \frac{A^2}{y^2} }\\ &\le \sqrt{ |z|^2 + 2Ae^{-2t} + 4A^2e^{-2t} }\\ & = |z| \sqrt{ \left(1 + \frac{(2A+4A^2)e^{-2t}}{|z|^2} \right) }\\ &\le |z| \sqrt{\left(1+ 4(2A+4A^2)e^{-4t}\right) }, \end{align*}
where in the last line we are using that $$|z| > e^t/2 \Longrightarrow |z|^{-2} \le 4e^{-2t}.$$
\end{proof}

\subsection{Towards proving Theorem~\ref{thm:error}}
	 To prove  Theorem~\ref{thm:error} we break the left-hand side of \ref{eq:approx} into several terms. First note we have 
	
	\begin{align}\label{eq:sum}\left|N_A^*(\omega, e^t) - \pi e^{2t}\left(A_t \widehat{h_A}\right)(\omega) \right| &= \left| \sum_{(z, w) \in \Lambda_{\omega}^2} \left(\one_{D_A(e^t/2, e^t)}(z,w) -\pi e^{2t} \left(A_t h_A\right)(z, w)\right)\right| \\ \nonumber &= \left|\left(\one_{D_A(e^t/2, e^t)} - \pi e^{2t} A_t h_A\right)^{\SV}(\om)\right|.\end{align} We break (\ref{eq:sum}) into several pieces: a \emph{main term} discussed in \S\ref{sec:main} and four \emph{error terms} in (\S\ref{sec:error}). We will show in \S\ref{sec:upperbound} how to control the error terms. Note that for any $h \in B_c(\C^2), S \subset \C^2$, we can write \begin{align*} \sum_{(z, w) \in \La_{\om} \cap S} h(z, w) &= \sum_{(z,w) \in \La_{\om}} h(z,w)\one_S(z,w) \\ & = (h\cdot \one_S)^{\SV}(\om),\end{align*} 
	
	\subsection{Main term}\label{sec:main} Let $$M_t = \left\{ (z,w) \in D_A\left(\sqrt{\frac{\cosh(2t)}{2}}, e^t\right): |w| < |z|(1+e^{-4t})^{-1/2}\right\}.$$ That is $M_t$ is the collection of pairs satisfying (\ref{eq:lemassum}). Our \emph{main term} will capture the differences of counting pairs in this set and circle averages of $h_A$, that is,
	
	\begin{align}\label{eq:main} m_t(\omega) &=  \sum_{(z, w) \in \La_{\om}^2 \cap M_t} \left(\one_{D_A(e^t/2, e^t)}(z,w) - \pi e^{2t} (A_t h_A)(z, w) \right)  \\ \nonumber &=   \left( \one_{M_t} \cdot \left(\one_{D_A(e^t/2, e^t)} - \pi e^{2t} A_t h_A \right)\right)^{\SV} (\om)\\ \nonumber &=    \left(\one_{M_t} - \pi e^{2t} (A_t h_A)\cdot \one_{M_t} \right)^{\SV} (\om)\end{align} where the last line follows from the fact that $M_t \subset D_A(e^t/2, e^t)$.
	
	\subsection{Error terms}\label{sec:error} We now start to bound the error terms, which arise from pairs in 4 different regions of $\C^2$, See Figure~\ref{fig:errors}.
	
	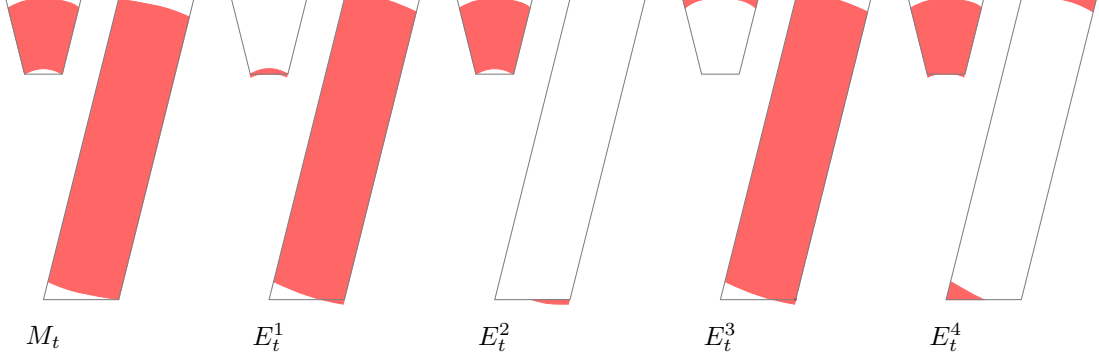
\begin{figure}[tb]\caption{We highlight in red each of the five sets, the main term and four error terms defined in Sections~\ref{sec:main} and \ref{sec:error}. For reference, the trapezoid is $g_{-t}\mathcal{T}$ and the parallelogram to the right of the trapezoid is the set of $w\in \mathcal{R}_A(z)$ for a fixed $z$.}
	\label{fig:errors}
	\centering
\begin{tikzpicture}


\filldraw[red!60] (1/4, 1) -- (.47, 4*.47) to[out=130,in=180] (0,2) to[out=180,in=30] (-.47, 4*.47)--(-1/4, 1) to[out=30,in=150] (1/4,1) ;
\filldraw[red!60] (1,2) to[out=-10,in=155](1.94, 4*.94 - 2)--(1,-2)to[out= 170,in=-25] (1-.94, -1.76);
\node at (0,-2.5) {$M_t$};
\draw[gray](1/4, 1)--(1/2, 2)--(-1/2, 2)--(-1/4, 1)--cycle;
\draw[gray](1,2)--(2,2)--(1,-2)--(0,-2) -- (1,2);
    
\filldraw[red!60] (1/4+3, 1) to[out=150,in=30] (-1/4+3, 1)--(-.24+3, 4*.24) to[out=30,in=180] (3,1) to[out=0,in=150] (.24+3,4*.24);
\filldraw[red!60](1.015+3,4*.015 +2)to[out=0,in=155](1.957+3,4*.957 -2)--(4,-2)--(1-.015+3, -4*.015 -2) to [out =170, in = -25 ](4-.94, -1.76);
\node at (3,-2.5) {$E_t^1$};
\draw[gray](1/4+3, 1)--(1/2+3, 2)--(-1/2+3, 2)--(-1/4+3, 1)--cycle;
\draw[gray](1+3,2)--(2+3,2)--(1+3,-2)--(0+3,-2)--(1+3,2);


\filldraw[red!60] (6+ 1/4, 1) -- (6+ .47, 4*.47) to[out=130,in=180] (6+ 0,2) to[out=180,in=30] (6-.47, 4*.47)--(6-1/4, 1) to[out=30,in=150] (6+1/4,1) ;
\filldraw[red!60](1.015+6,4*.015 +2)to[out=0,in=155](3/2+6,2)-- (7,2);
\filldraw[red!60](1-.015+6, -4*.015 -2) to [out =180, in = -20 ](6.5,-2)-- (7,-2);
\draw[gray](1/4+6, 1)--(1/2+6, 2)--(-1/2+6, 2)--(-1/4+6, 1)--cycle;
\draw[gray](1+6,2)--(2+6,2)--(1+6,-2)--(0+6,-2)--cycle;
\node at (6,-2.5) {$E_t^2$};

\filldraw[red!60] (1/2+9, 2.005)--(9.47, 4*.47) to[out=140,in=0] (9,2.005) --(1/2+9, 2.005);
\filldraw[red!60](-1/2+9, 2) -- (9,2.005) to[out=180,in=40] (9-.47, 4*.47)-- (-1/2+9, 2);
\filldraw[red!60](1.515+8.5,4*.015 +2)to[out=0,in=155](1.957+9,4*.957 -2)--(10,-2)node[red]{$\cdot$}--(1-.015+9, -4*.015 -2) to [out =170, in = -25 ](10-.94, -1.76);
\draw[gray](1/4+9, 1)--(1/2+9, 2)--(-1/2+9, 2)--(-1/4+9, 1)--cycle;
\draw[gray](1+9,2)--(2+9,2)--(1+9,-2)--(0+9,-2)--cycle;
\node at (9,-2.5) {$E_t^3$};

\node at (12,-2.5) {$E_t^4$};
\filldraw[red!60](12+ 1/4, 1) -- (12+ .47, 4*.47) to[out=130,in=180] (12+ 0,2) to[out=180,in=30] (12-.47, 4*.47)--(-.24+12, 4*.24) to[out=30,in=180] (12,1) to[out=0,in=150] (.24+12,4*.24);
\filldraw[red!60](1.957+12,4*.957 -2)to[out=150,in=-10](13.5,2)--(14,2);
\filldraw[red!60](13-.94, -1.76)to[out=-30,in=155](12.5,-2)--(12,-2);
\draw[gray](1/4+12, 1)--(1/2+12, 2)--(-1/2+12, 2)--(-1/4+12, 1)--cycle; 
\draw[gray](1+12,2)--(2+12,2)--(1+12,-2)--(0+12,-2)--cycle; 

\end{tikzpicture}
\end{figure}
	
\subsubsection{Error term 1: Bottom of trapezoid}\label{sec:error1} We define
$$E_t^1 =  D_A\left(e^t/2, \sqrt{\frac{\cosh (2t)}{2}}\right).$$ In particular, for $(z, w) \in E_t^1$ $r_{\theta} z$ hits only the \emph{bottom} of the trapezoid $g_{-t} \T$, and the arc $\Theta_t(z)$ is not the full possible arc of width $2\arctan(e^{-2t})$. Note that the smallest possible length of a vector in $g_{-t} \T$ is $e^t/2$, and for $z = |z|e^{i\varphi}$ with \begin{align*} e^t/2 &< |z| < \sqrt{\frac{\cosh(2t)}{2}} = \sqrt{\frac{e^{2t} +e^{-2t}}{4}} \\ \Theta_t (z) &= (\arcsin(e^t/2|z|)-\varphi, \pi - \arcsin(e^t/2|z|)-\varphi),\end{align*} so $$|\Theta_t(z)|= \pi - 2 \arcsin(e^t/2|z|) = 	2 \arccos(e^t/2|z|).$$ We define

	\begin{align}\label{eq:error1} e_t^1(\omega) &=  \sum_{(z, w) \in \La_{\om}^2 \cap E_t^1} \left(\left(\one_{D_A(e^t/2, e^t)}(z,w) - \pi e^{2t} (A_t h_A)(z, w) \right) \right)  \\ \nonumber &=   \left( \one_{E_t^1} \cdot \left(\one_{D_A(e^t/2, e^t)} - \pi e^{2t} A_t h_A \right)\right)^{\SV} (\om) \\ \nonumber &=   \left(\one_{E_t^1} - \pi e^{2t} (A_t h_A)\cdot \one_{E_t^1} \right)^{\SV} (\om)\end{align} where the last line follows from the fact that $E_t^1 \subset D_A(e^t/2, e^t)$.
	
	\subsubsection{Error term 2: {long $w$ in $D_A$}.}\label{sec:error2} Our second error term consists of pairs $(z, w)$ for which $$|\Theta_t(z)| = 2\arctan(e^{-2t}) \mbox{ but  }|w| > |z|(1+e^{-4t})^{-1/2},$$ so (\ref{eq:lemassum}) is not satisfied. 	That is, $$E_t^2 = \left\{(z, w) \in D_A\left(\sqrt{\frac{\cosh(2t)}{2}}, e^t\right):  |w| > |z|(1+e^{-4t})^{-1/2}\right\},$$ and we define the counting function
	
	\begin{align}\label{eq:error2} e_t^2(\omega) &= \sum_{(z, w) \in \La_{\om}^2 \cap E_t^2} \left(\one_{D_A(e^t/2, e^t)}(z,w) - \pi e^{2t} (A_t h_A)(z, w) \right)   \\ \nonumber &=   \left( \one_{E_t^2} \cdot \left(\one_{D_A(e^t/2, e^t)} - \pi e^{2t} A_t h_A \right)\right)^{\SV} (\om) \\ \nonumber  \end{align} 
	
\subsubsection{Error term 3: The top of the trapezoid}\label{sec:error3} Our third error term is based on the set $$E_t^3 = \{(z,w) \in \C^2: (A_t h_A)(z, w) >0, |z| > e^t\},$$ that is, where $z$ is in the \emph{top} of the trapezoid, and $(z,w) \notin D_A(e^t/2, e^t)$. We set

	\begin{align}\label{eq:error3} e_t^3(\omega) &=  \sum_{(z, w) \in \La_{\om}^2 \cap E_t^3} \left(\one_{D_A(e^t/2, e^t)}(z,w) - \pi e^{2t} (A_t h_A)(z, w) \right)  \\ \nonumber &=   \left( \one_{E_t^3} \cdot \left(\one_{D_A(e^t/2, e^t)} - \pi e^{2t} A_t h_A \right)\right)^{\SV} (\om) \\ \nonumber  &= -\left(\one_{E_t^3} \cdot \pi e^{2t} A_t h_A \right)^{\SV} (\om)\\ \nonumber  \end{align} where the last line follows from the fact that $E_t^3$ is disjoint from $D_A(e^t/2, e^t)$.

\subsubsection{Error term 4: out of $D_A$ with long $w$}\label{sec:error4}

Our fourth and final error term is based on the set where the averaging operator is positive, but $(z, w) \notin D_A(e^t/2, e^t)$. We define $$E_t^4 = \{(z,w) \in \C^2: (A_t h_A)(z, w) >0, e^t/2 \leq |z|\leq e^t, |w|>|z|\},$$ 
\begin{align}\label{eq:error4} e_t^4(\omega) &= \sum_{(z, w) \in \La_{\om} \cap E_t^4} \left(\one_{D_A(e^t/2, e^t)}(z,w) - \pi e^{2t} (A_t h_A)(z, w) \right)   \\ \nonumber &=   \left( \one_{E_t^4} \cdot \left(\one_{D_A(e^t/2, e^t)} - \pi e^{2t} A_t h_A \right)\right)^{\SV} (\om)\\ \nonumber  &= -\left(\one_{E_t^4} \cdot \pi e^{2t} A_t h_A \right)^{\SV} (\om)\\ \nonumber  \end{align} where the last line follows from the fact that $E_t^4$ is disjoint from $D_A(e^t/2, e^t)$.

\subsubsection{Decomposition}\label{sec:decomp} By construction $$D_A(e^t/2, e^t) \cup \{(z,w): (A_t h_A)(z,w)>0\} = M_t \cup \bigcup_{i=1}^{4} E_t^i,$$ and the sets $M_t$ and $E_t^i$ are pairwise disjoint. Therefore

\begin{align}\label{eq:decomp}\left|N_A^*(\omega, e^t) - \pi e^{2t}\left(A_t \widehat{h_A}\right)(\omega) \right| &= \left| \sum_{(z, w) \in \Lambda_{\omega}^2} \left(\one_{D_A(e^t/2, e^t)}(z,w) -\pi e^{2t} \left(A_t h_A\right)(z, w)\right)\right| \\ \nonumber &= \left|\left(\one_{D_A(e^t/2, e^t)} - \pi e^{2t} A_t h_A\right)^{\SV}(\om)\right| \\ \nonumber &= \left| m_t(\om) + \sum_{i=1}^4 e_t^i(\om) \right| \\ \nonumber \end{align}

\section{Upper bounds}\label{sec:upperbound} 

\subsection{Almost sure bounds}\label{sec:almost}

We now show our key almost sure quadratic upper bound for pairs of saddle connections with bounded virtual area (Proposition~\ref{prop:quadratic}), then show how to modify the proof to give Proposition~\ref{prop:Mjbounds}, which controls the main and error terms defined in the previous section.  The main idea is that for each length $e^t$ we will partition the circle into intervals $I_i$ of length $e^{-2t}$ centered at points $\theta_i$.  We will rotate each $I_i$ so that $\theta_i=\pi/2$  and consider $g_t(\omega)$.  We estimate the lengths on $g_t(\omega)$ of  saddle connections whose length on $\omega$ is at  most $e^t$ on $\omega$ and whose  holonomy vector has direction that lies in $\theta_i$.  Then we count saddle connections with these prescribed bounds on $g_t(\omega)$ which will give us the desired bounds on the number of length at most $e^t$ on $\omega$. A major part of this count will be to study the different possibilities for the surfaces $g_t(\omega)$.

\begin{Prop}
\label{prop:Mjbounds} For almost every $\om$, \begin{equation}\label{eq:mainbound}|m_t(\om)| = o(e^{2t})\end{equation} and for $i=1,2, 3,4$ \begin{equation}\label{eq:errorbound}|e_t^i(\om)| = o(e^{2t})\end{equation}
\end{Prop}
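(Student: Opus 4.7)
The plan is to treat the main term and the four error terms separately, combining the almost-sure quadratic upper bound from Proposition~\ref{prop:quadratic} with refined geometric estimates that exploit how small the sets $M_t$ and $E_t^i$ really are.

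For the main term, the exact identity $(A_t h_A)(z,w) = \arctan(e^{-2t})/\pi$ valid on $M_t$ (Lemma~\ref{lem:equality}) collapses the summand in~(\ref{eq:main}) to $1 - e^{2t}\arctan(e^{-2t}) = \tfrac{1}{3}e^{-4t} + O(e^{-8t})$ by Taylor expansion. Since $M_t \subset D_A(e^t/2, e^t)$, the number of holonomy pairs in $M_t$ is at most $N_A^*(\om, e^t)$, which is $O(e^{2t})$ a.s.\ by Proposition~\ref{prop:quadratic}; multiplying gives $|m_t(\om)| = O(e^{-4t}) \cdot O(e^{2t}) = O(e^{-2t})$, which is comfortably $o(e^{2t})$.

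For the error terms, the bound $\pi e^{2t}(A_t h_A)(z,w) \leq e^{2t}\arctan(e^{-2t})$ from Lemma~\ref{lem:leq} makes each summand in $e_t^i(\om)$ uniformly bounded, so it suffices to prove $\#(E_t^i \cap \La_\om^2) = o(e^{2t})$ almost surely for $i = 1,2,3,4$. The crucial geometric feature is that each $E_t^i$ is thin: $E_t^1$ and $E_t^3$ confine $z$ to annuli of radial width $O(e^{-3t})$ (since $\sqrt{\cosh(2t)/2}-e^t/2$ and $\sqrt{2\cosh(2t)}-e^t$ are both of this order), while $E_t^2$ and $E_t^4$ confine $|w|$ to a band around $|z|$ of relative width $O(e^{-4t})$. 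Combined with the constraint $|z\wedge w|\le A$, a direct computation using~(\ref{eq:SV-measure}) shows the Siegel--Veech average $\int_{\hh}\widehat{\one_{E_t^i}}\,d\mu = m(E_t^i)$ is $O(e^{-2t})$, generously consistent with the desired pointwise bound.

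To promote the $L^1$ estimate to a pointwise almost-sure statement we follow the renormalization scheme outlined before the proposition: partition $S^1$ into $N \asymp e^{2t}$ arcs of length $e^{-2t}$, rotate each to be centered at $\pi/2$, and apply $g_t$. A holonomy pair $(z,w) \in E_t^i \cap \La_\om^2$ whose direction lies in an arc $I_j$ becomes, under this renormalization, a pair of saddle connections on $g_t r_{\theta_j}\om$ of uniformly bounded length, with additional length (or relative-length) restrictions dictated by the thinness of $E_t^i$. Summing over the $N$ arcs and invoking the thin-part stratification into the three regimes $F, G, H$ (as in Lemma~\ref{lem:integralthin}) together with the thick-part measure bounds (Lemma~\ref{lem:bad}) yields the required a.s.\ count. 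The principal obstacle is precisely this promotion from average to pointwise: the $O(e^{-2t})$ $L^1$ bound is not by itself enough, because a single short non-homologous pair of saddle connections on $g_t\om$ can produce many holonomy pairs lying in a given thin $E_t^i$, and it is the $F,G,H$-decomposition of the thin part combined with the $L^{1+\kappa}$-integrability of the generalized Siegel--Veech transform from~\cite{AthreyaCheungMasur19} that rules out such bad contributions accumulating across arcs.
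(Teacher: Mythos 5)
Your treatment of the main term reproduces the paper's argument: on $M_t$ the circle average equals $\arctan(e^{-2t})/\pi$ by Lemma~\ref{lem:equality}, the summand is $O(e^{-4t})$, and Proposition~\ref{prop:quadratic} then gives $|m_t(\om)| = O(e^{-2t}) = o(e^{2t})$. That part is complete and matches the paper.

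For the error terms you correctly identify the two key tools (Corollary~\ref{cor:thin} together with the $F,G,H$ stratification for the thin part; Lemma~\ref{lem:bad} for the thick part) and the renormalization scheme of partitioning $S^1$ into $\asymp e^{2t}$ arcs and examining the surfaces $g_tr_{\theta_i}\om$. But there is a genuine gap between ``each $E_t^i$ is a thin region of $\C^2$'' and ``$g_tr_{\theta'}\om$ lies in a small measurable subset of $\hh$.'' Lemma~\ref{lem:bad} bounds the measure of a set $\Omega(\widehat\epsilon,\epsilon',L,L')$ of surfaces defined by four explicit closeness conditions (ratio of imaginary parts near $1$, virtual area near $A$, $\im(z)$ near $L'$, $|\re z|$ near $\im z$); to invoke it one must prove that whenever $(z,w)\in E_t^i$ with direction in $I(\theta_i)$ and the surface is $\widehat\epsilon$-thick, the renormalized surface $g_tr_{\theta'}\om$ actually satisfies one of those four conditions with the chosen $\epsilon'$. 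This is not automatic and requires delicate estimates on how $\im(r_{\theta'}z)$, $\im(r_{\theta'}w)$ and the ratio $|\im(r_{\theta'}w)|/\im(r_{\theta'}z)$ behave as $\theta'$ ranges over the arc, with separate bookkeeping for each of $E_t^1,\dots,E_t^4$; the paper devotes most of Section~\ref{sec:finalproof} to precisely this, with threshold times $T_1,\dots,T_4$ chosen so the distortion constants fit inside $\epsilon'$. Your phrase ``additional length (or relative-length) restrictions dictated by the thinness of $E_t^i$'' names the phenomenon but does not establish it, and without it Lemma~\ref{lem:bad} cannot be applied.

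A secondary issue: you claim ``a direct computation using~(\ref{eq:SV-measure}) shows $\int_{\hh}\widehat{\one_{E_t^i}}\,d\mu = m(E_t^i)$ is $O(e^{-2t})$.'' The Lebesgue volume of $E_t^i$ is $O(e^{-2t})$, but the Siegel--Veech measure $m$ in~(\ref{eq:SV-measure}) is not Lebesgue measure on $\C^2$ (it decomposes over $SL(2,\R)$-orbits with unknown weights $\nu$ and $\rho$), so this computation is not as direct as asserted. In any case the paper never uses this $L^1$ bound; it is a plausibility check rather than an ingredient, and the actual proof proceeds entirely through the pointwise mechanism you sketch but do not complete.
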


\begin{proof}[Proof of Theorem~\ref{thm:error}]To prove Theorem~\ref{thm:error}, we combine (\ref{eq:decomp}) and Prop~\ref{prop:Mjbounds} to get 

\begin{align}\label{eq:decomp:o}\left|N_A^*(\omega, e^t) - \pi e^{2t}\left(A_t \widehat{h_A}\right)(\omega) \right|  &= \left| m_t(\om) + \sum_{i=1}^4 e_t^i(\om) \right| \\ \nonumber &\le |m_t(\om)| + \sum_{i=1}^4 |e_t^i(\om)| \\ \nonumber &= o(e^{2t}) .\\ \nonumber \end{align} 
\end{proof}

\paragraph*{\bf Concluding the proofs} We dedicate the rest of the paper proving the last three results (Proposition~\ref{prop:Mjbounds}, Proposition~\ref{prop:quadratic}, and Lemma~\ref{lem:smallboundary}), which fit together as their proofs all require very similar ingredients. We spend Section~\ref{sec:Notation} through Section~\ref{sec:reducing} establishing preliminary counting results that go into the proofs. In Section~\ref{sec:proof4.2}  we prove Proposition~\ref{prop:quadratic}. We then add one more ingredient giving bounds in the thick part of the stratum in Section~\ref{sec:subquadraticthick}. In Section~\ref{sec:finalproof} we prove Proposition~\ref{prop:Mjbounds}. We conclude with Section~\ref{sec:integralbounds} 
which adds integral bounds in the thin part of the stratum used to prove Lemma~\ref{lem:smallboundary}. 
\subsection{Notation}\label{sec:Notation}

\paragraph*{\bf Systoles} If $\gamma$ is a saddle connection on $\om$, we write $\ell(\gamma)$ for the length of $\gamma$, i.e., $$\ell(\gamma) = |z_{\gamma}|,  \mbox{ where } z_{\gamma} = \int_{\gamma} \omega \mbox{ is the holonomy vector of }\gamma.$$ We write $\ell(\omega) = \ell(\gamma_0)$ for the length of the shortest saddle connection $\gamma_0(\om)$ on $\om$, and $\tilde{\ell}(\om) = \ell(\gamma_1)$ for the length of the shortest saddle connection $\gamma_1(\om)$ not homologous to the shortest saddle connection $\gamma_0(\om)$. We define $$BC= \{\omega \in \hh: \gamma_0(\om) \mbox{ bounds a cylinder}\}$$ and given $\epsilon$,  $$BC_{\epsilon} = \{\omega \in \hh: \gamma_0(\om) \mbox{ bounds a cylinder of width } \le \epsilon \}.$$ 

\paragraph*{\bf Scales} Fix $0 < \sigma < 1$. Let $$s_{\sigma}(\om)  = \frac{\log \ell(\om)}{\log \sigma}, \tilde{s}_{\sigma}(\om) = \frac{\log \tilde{\ell}(\om)}{\log \sigma},$$ so $$\sigma^{s_{\sigma}(\om)} = \ell(\om), \sigma^{\tilde{s}_{\sigma}(\om)} = \tilde{\ell}(\om).$$

\paragraph*{\bf Angles} On a base surface $\omega$ we refer to a holonomy vector $z$ of a saddle connection $\gamma$ without subscripts. On the surface $g_t r_{\theta}\omega$ the image holonomy vector $g_t r_\theta z$ will be denoted $z_{\theta, t}$. If $z = |z|e^{i\varphi}$, we define $\theta_z = \pi/2 - \varphi$ to be the angle so that $r_{\theta_z} z$ is vertical, that is $r_{\theta_z} = |z|i$.

\paragraph*{\bf Triangulations}
Let $N = N(\hh)$ be the maximum number of edges in a triangulation by saddle connections of any $\omega \in \hh$. This number exists since saddle connections joining a pair of zeroes lie in different homotopy classes and there is a finite number of homotopy classes of curves that are disjoint except at common vertices which are the zeroes of $\omega$. 

\paragraph*{\bf The set} Define $$P_A(\omega,e^t)= \La_{\om}^2 \cap D_A(e^t/2, e^t),$$ so $|P_A(\omega, e^t)| = N_A^*(\omega,e^t).$

\paragraph*{\bf Partitioning the circle} For each $t>0$ partition $[0,2\pi)$ into $\lfloor e^{2t} \rfloor$ intervals $I(\theta_i)$ of radius $\frac{\pi}{\lfloor e^{2t} \rfloor}$ centered at points $\theta_i$ for $i=1,\ldots, \lfloor e^{2t} \rfloor$.  We will look at counting on the finite set of surfaces $\{\omega_i = g_{t} r_{\theta_i}\omega\}$ and then prove Lemma~\ref{lem:expand} to observe that in each of these intervals centered at $\theta_i$ lengths change by at most a multiplicative constant which will be absorbed in our estimates.

\paragraph*{\bf Ratios}For each $i$, define $$j = j(i) = \left\lfloor s_{\sigma}(\om_i) - \tilde{s}_{\sigma}(\om_i)\right\rfloor,$$ so 
$$\sigma^{j+1} < \frac{\ell(\om_i)}{\tilde{\ell}(\omega_i)} \leq \sigma^j.$$

\subsubsection{Measure bounds}\label{sec:measurebounds}

Next we state a result originally due to Masur-Smillie \cite{MasurSmillie91}, estimating the measure of the set of surfaces with two non-homologous short saddle connections. \begin{lemma}\label{lem:MS}\cite[Equation 7]{MasurSmillie91}
	For all $\epsilon, \kappa >0$, the Masur-Smillie-Veech measure of the set $V_1(\epsilon, \kappa) \subset \mathcal{H}$ of $\omega$ which have a saddle connection of length at most $\epsilon$, and a non-homologous saddle connection with length at most $\kappa$ is $O(\epsilon^2 \kappa^2)$.
\end{lemma}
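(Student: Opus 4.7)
The plan is to work in period coordinates on $\hh$, where the Masur--Smillie--Veech measure $\mu$ is (a constant multiple of) the restriction of Lebesgue measure to the area-$1$ quadric, and to reduce the bound to a pair of independent constraints on two complex periods. Recall that a $\Z$-basis $\{\eta_1, \ldots, \eta_n\}$ of the relative homology $H_1(X, \Sigma; \Z)$ (with $\Sigma$ the zero set of $\omega$) yields a local chart $\omega \mapsto (z_{\eta_i}(\omega))_{i=1}^n \in \C^n$ on $\hh$ in which $\mu$ is Lebesgue measure on $\{\mathrm{Area}(\omega) = 1\}$.

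First I would bound the indicator of $V_1(\epsilon,\kappa)$ pointwise by a sum over pairs of $\R$-linearly independent homology classes (the non-homologous but proportional case is geometrically non-generic and its contribution is controlled by a simpler one-dimensional version of the same argument),
\begin{equation*}
\one_{V_1(\epsilon, \kappa)}(\om) \;\lesssim\; \sum_{\substack{\gamma_1, \gamma_2 \in \SC_\om \\ [\gamma_1], [\gamma_2] \text{ lin.\ indep.}}} \one_{|z_{\gamma_1}|\le \epsilon}\, \one_{|z_{\gamma_2}|\le \kappa},
\end{equation*}
and integrate against $\mu$. For each $\R$-linearly independent pair $(c_1, c_2) \in H_1(X, \Sigma; \Z)^2$, I would complete $\{c_1, c_2\}$ to a $\Z$-basis of $H_1(X, \Sigma; \Z)$ and pass to the corresponding adapted period coordinates. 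The constraints $|z_{c_1}|\le \epsilon$ and $|z_{c_2}|\le \kappa$ each confine one complex coordinate to a disk of area $\pi \epsilon^2$ and $\pi \kappa^2$ respectively, while the area-$1$ slice (given by a quadratic form in the periods bounded below by $1$) restricts the remaining $2n-4$ real coordinates to a region of bounded volume. This yields a contribution of at most $C\, \epsilon^2 \kappa^2$ per pair of classes, with $C = C(\hh)$.

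The main obstacle is controlling the total contribution from the sum over pairs of classes, since $H_1(X, \Sigma; \Z)$ is infinite. The key observation is that on any region of $\hh$ where the systole is bounded below, only finitely many classes can appear as holonomies of length at most $\max(\epsilon, \kappa)$, since each such class is representable as a sum of at most $N(\hh)$ triangulation edges of bounded length (using the parameter from \S\ref{sec:Notation}). For the thin part of $\hh$, I would condition on the systole's homology class -- which by itself contributes a factor $O(\epsilon^2)$ via the classical single-short-saddle-connection volume bound -- and then apply the per-pair analysis to the second non-homologous class, gaining an additional factor $O(\kappa^2)$. Summing over the finitely many relevant configurations yields $\mu(V_1(\epsilon,\kappa)) = O(\epsilon^2 \kappa^2)$.
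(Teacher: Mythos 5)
The paper does not prove Lemma~\ref{lem:MS}; it simply cites \cite[Equation~7]{MasurSmillie91}. So your proposal must be judged on its own merits and against the Masur--Smillie argument it is implicitly reconstructing.

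Your basic framework is right: work in period coordinates, observe that fixing two $\R$-independent relative classes $c_1,c_2$ and imposing $|z_{c_1}|\le\epsilon$, $|z_{c_2}|\le\kappa$ pins two complex coordinates into disks of area $O(\epsilon^2)$ and $O(\kappa^2)$, and bound the remaining coordinates on the area-one slice by a constant. That gives $O(\epsilon^2\kappa^2)$ per pair of classes. The genuine gap is in how you control the sum over pairs. Your finiteness observation --- ``on any region where the systole is bounded below, only finitely many classes can appear as holonomies of length $\le\max(\epsilon,\kappa)$'' --- is correct, but it simply does not apply: $V_1(\epsilon,\kappa)$ lives entirely in the $\epsilon$-thin part by definition, so the systole is \emph{not} bounded below there, and as $\omega$ ranges over $V_1(\epsilon,\kappa)$ the set of homology classes that can be represented by short saddle connections is unbounded. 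Your fallback for the thin part, ``condition on the systole's homology class, which contributes $O(\epsilon^2)$,'' is too vague to carry the argument: there are infinitely many candidate classes for the systole, the $O(\epsilon^2)$ single-short-saddle-connection bound is itself a sum over all of them (so you cannot cite it for a fixed class and then multiply), and the claimed multiplicative independence between the $O(\epsilon^2)$ and $O(\kappa^2)$ factors is asserted rather than established. The same problem re-enters in the last sentence (``summing over the finitely many relevant configurations''): nothing in the proposal actually reduces to finitely many configurations.

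The missing idea --- the one Masur--Smillie use, and the one this paper itself invokes in the proof of Lemma~\ref{lem:bad} --- is finiteness of Delaunay (or more generally, geodesic) triangulation \emph{types} modulo the mapping class group, combined with the fact that the shortest and shortest non-homologous saddle connections are always edges of the Delaunay triangulation. This replaces the infinite sum over $H_1(X,\Sigma;\Z)^2$ by a finite sum over combinatorial types, and within each type the two short saddle connections are among the $\le N(\hh)$ triangulation edges, which form (part of) a coordinate basis, so the per-type estimate $O(\epsilon^2\kappa^2)$ is exactly your local Lebesgue computation. Also, your parenthetical claim that the non-homologous-but-$\R$-proportional case ``is controlled by a simpler one-dimensional version of the same argument'' needs justification: if $[\gamma_2]$ were an integer multiple of $[\gamma_1]$, then $|z_{\gamma_1}|\le\epsilon$ would already force $|z_{\gamma_2}|$ to be small, and the two constraints would not be independent, so the one-variable estimate gives only $O(\epsilon^2)$, not $O(\epsilon^2\kappa^2)$. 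One must argue that this degenerate configuration either does not occur for saddle connections or has measure zero; it should not be waved off as ``geometrically non-generic.''
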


\subsubsection{Counting lemmas}\label{sec:countinglemma} Finally we state a counting lemma which is summarized from \cite[\S 3.6.2 and 3.6.3]{AthreyaCheungMasur19}. Recall $N(\omega,\epsilon_0)$ is the number of nonhomologous saddle connections of length at most $\epsilon_0$.
\begin{lemma}\label{lem:ACM} Fix $\omega \in \hh$ with shortest saddle connection $\gamma$ and second shortest saddle connection $\gamma'$.  If either $\gamma$ does not bound a cylinder, or $\gamma$ bounds a cylinder of width at least $\epsilon_0$ then $$N(\omega, \epsilon_0)^2  =O(|\ell(\gamma')|^{-2N}).$$

\end{lemma}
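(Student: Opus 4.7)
The plan is to construct a geodesic triangulation $T$ of $\omega$ by saddle connections whose every edge has length bounded below by a constant multiple of $\ell(\gamma')$, and then bound $N(\omega,\epsilon_0)$ by a combinatorial count of short saddle connections relative to $T$.

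First I would build $T$ using $\gamma$ and $\gamma'$ as two of its edges. The hypothesis on $\gamma$ is essential here. In the case that $\gamma$ does not bound a cylinder, no saddle connection on $\omega$ other than $\gamma$ itself (and its homologous copies) has length below $\tilde\ell(\omega)=\ell(\gamma')$, so extending $\{\gamma,\gamma'\}$ to a full geodesic triangulation in the standard Delaunay-style way produces edges of length at least $c\,\ell(\gamma')$ for some absolute constant $c$. In the case that $\gamma$ bounds a cylinder of width $\ge\epsilon_0$, the cylinder is triangulated by a diagonal of length $\ge\epsilon_0$, and its complement---a subsurface on which $\ell(\gamma')$ remains the shortest saddle connection length---is triangulated separately by saddle connections of length $\ge c\,\ell(\gamma')$. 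In either case the resulting triangulation has at most $N=N(\hh)$ edges, each of length at least $c(\epsilon_0)\,\ell(\gamma')$.

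Next I would count saddle connections of length at most $\epsilon_0$ relative to $T$. A saddle connection $\alpha$ with $|z_\alpha|\le\epsilon_0$, once developed in the plane, crosses only boundedly many (in terms of $\epsilon_0/\ell(\gamma')$) triangles of $T$; the combinatorial type of this crossing pattern, together with the holonomy vectors $z_1,\dots,z_N$ of the edges of $T$, determines $\alpha$ uniquely. A direct combinatorial count of admissible patterns---equivalent to counting integer vectors $(a_1,\dots,a_N)\in\Z^N$ with $\bigl|\sum_i a_i z_i\bigr|\le\epsilon_0$ coming from saddle connection traversals, which forces $|a_i|\le\epsilon_0/(c\,\ell(\gamma'))$ for each $i$---yields at most $O\bigl((\epsilon_0/\ell(\gamma'))^N\bigr)=O(\ell(\gamma')^{-N})$ saddle connections, with the implicit constant depending only on $\epsilon_0$ and the stratum $\hh$. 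Squaring gives the desired bound on $N(\omega,\epsilon_0)^2$.

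The main obstacle will be the careful triangulation construction in the cylinder case: one has to guarantee that the complement of the cylinder admits a geodesic triangulation whose every edge is bounded below by a uniform multiple of $\ell(\gamma')$, which uses essentially that the complement contains no saddle connection shorter than $\ell(\gamma')=\tilde\ell(\omega)$. A subsidiary technicality is verifying that the combinatorial crossing-pattern representation is boundedly many-to-one on saddle connections, so that the count on patterns truly bounds $N(\omega,\epsilon_0)$ rather than a substantial over-approximation; this would be handled by exploiting the fact that each saddle connection is uniquely determined by the sequence of oriented triangle edges it crosses, up to a uniformly bounded choice coming from the finite combinatorics of the stratum.
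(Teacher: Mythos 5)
The paper does not prove this lemma; it is cited directly from Athreya--Cheung--Masur \cite[\S 3.6.2, 3.6.3]{AthreyaCheungMasur19}, so you are effectively reconstructing that argument. Your general plan---triangulate and bound crossing numbers---is the right shape, but there is a genuine gap at the crucial point.

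The triangulation you build necessarily contains $\gamma$ itself as an edge (a Delaunay or shortest-first geodesic triangulation always includes the systole), and $\ell(\gamma)$ can be arbitrarily small compared to $\ell(\gamma')$. So the claim that ``the resulting triangulation has at most $N$ edges, each of length at least $c(\epsilon_0)\,\ell(\gamma')$'' is false as written. This matters: the coefficient $a_i$ attached to the edge $\gamma$ records the number of times a developed saddle connection of length $\le \epsilon_0$ crosses $\gamma$, and nothing in your argument bounds that count by $\epsilon_0/\ell(\gamma')$; naively it could be as large as $\epsilon_0/\ell(\gamma)$, which destroys the desired estimate. The hypothesis that $\gamma$ does not bound a cylinder (or bounds only a wide one) is in the lemma precisely to control this crossing count---between two consecutive crossings of $\gamma$ the developed geodesic must traverse other edges, and the no-(thin-)cylinder assumption forces it to pick up definite length of order $\ell(\gamma')$ each time. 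Your proposal invokes the hypothesis only to make the \emph{other} edges long, which does not address the short edge itself, so the key step of the argument is missing.

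A secondary issue: you slide from ``crosses only boundedly many triangles'' to ``$|\sum_i a_i z_i|\le\epsilon_0$ forces $|a_i|\le\epsilon_0/(c\,\ell(\gamma'))$.'' The latter implication is false in general: the holonomy equation admits cancellation, so a small total holonomy does not force small coefficients. The individual bounds on $|a_i|$ must come from the geometric crossing count (intersection numbers with edges of the triangulation), not from the holonomy inequality. You have the right geometric ingredient in hand; you should phrase the coefficient bound as a bound on intersection numbers, and then supply the missing cylinder/no-cylinder dichotomy argument to bound the intersection number with the short edge $\gamma$.

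Finally, be careful with ``second shortest saddle connection'': in the paper's conventions $\gamma'=\gamma_1(\omega)$ is the shortest saddle connection \emph{not homologous} to $\gamma$, i.e.\ $\ell(\gamma')=\tilde\ell(\omega)$. Your Delaunay triangulation may contain additional edges homologous to $\gamma$ that are shorter than $\tilde\ell(\omega)$; ruling those out (or controlling their crossing numbers) is again exactly where the cylinder hypothesis enters, so the same gap recurs there.
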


\subsection{Counting bounds with systoles}\label{sec:boundswsystoles} The following is a slight modification of  ~\cite[Theorem~5.1]{EskinMasur01}.

\begin{lemma}
\label{lem:EM}
For any $L_0>0$  and $\delta>0$   there exists $C=C(\delta,L_0)$ such that for any $L < L_0$ and any surface $(X,\omega)$ in the stratum 
\begin{equation}\label{eq:less optimal count}
N(\omega, L)\leq C\left(\frac{L}	{\ell(\omega)}\right)^{1+\delta}
\end{equation}

\end{lemma}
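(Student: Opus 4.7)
The plan is to follow the strategy of Eskin--Masur~\cite{EskinMasur01}, Theorem~5.1, modifying the exponent analysis to recover an almost-linear bound in $L/\ell(\omega)$ rather than a quadratic one. The starting observation is that when $L \leq L_0$ is bounded, the only way $N(\omega, L)$ can be large is if $\omega$ has a very short saddle connection, so the interesting regime is the thin part of the stratum.

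First I would fix a triangulation $\mathcal{T}_\omega$ of $\omega$ by saddle connections whose edges all have length $O(L_0)$; such a triangulation exists (for instance a Delaunay triangulation, or by greedily adding short saddle connections starting from the systole until a triangulation is reached), and the number of triangles is bounded by $2N(\mathcal{H})$. Any saddle connection $\gamma$ of length at most $L$ crosses a bounded combinatorial sequence of these triangles, and within each triangle the set of directions giving a saddle connection of length at most $L$ is a union of sectors whose angular measure is controlled by $L/h$, where $h$ is the height of the triangle in the relevant direction. Since the total area is $1$, short heights force some edge length to be correspondingly long, which I would use to bound the number of triangles that are genuinely ``thin''.

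The key mechanism is then to split into two cases depending on whether $\gamma_0(\omega)$ bounds a cylinder. In the cylinder case, parallel saddle connections along the cylinder core contribute $O(L/\ell(\omega))$ copies, and the cross-cylinder saddle connections are counted by a straightforward angular estimate giving a further $O(L/\ell(\omega))$ factor; together a more careful accounting, losing an arbitrarily small power $L^\delta$ (which in this bounded regime is a harmless constant absorbed in $C(\delta, L_0)$), produces the exponent $1+\delta$. In the non-cylinder case I would invoke Lemma~\ref{lem:ACM}, which (together with the fact that in a triangulation by short saddle connections the second shortest non-homologous saddle connection is comparable to $\ell(\omega)$ up to controlled loss) already gives an essentially polynomial bound in $1/\ell(\omega)$; here again the $\delta$-slack absorbs the exponent into $1+\delta$.

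The main obstacle will be the careful bookkeeping needed to trade the naive quadratic bound $(L/\ell(\omega))^2$ coming from ``both coordinates scale with $1/\ell$'' for the sharper $(L/\ell(\omega))^{1+\delta}$. This gain comes from noting that once a direction is fixed so that one holonomy coordinate is small (of size $\ell(\omega)$), the other coordinate is essentially determined by the triangulation's geometry up to an error that contributes only $L^\delta$; this is the step where I would need to introduce an $\epsilon$-net argument in the direction parameter and use that the area constraint forces all but a $\delta$-fraction of the directions to yield long saddle connections. Once this angular redistribution estimate is in hand, summing the per-triangle contributions over the bounded number of triangles in $\mathcal{T}_\omega$ and absorbing the multiplicative constants into $C(\delta, L_0)$ yields~\eqref{eq:less optimal count}.
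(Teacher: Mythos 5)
Your proposal aims to re-derive the bound from scratch via a triangulation and angular decomposition, but this misses what the lemma actually requires and what the paper actually does. The cited result, Eskin--Masur~\cite[Theorem~5.1]{EskinMasur01}, \emph{already gives exactly the exponent $1+\delta$}: there is a $\kappa = \kappa(\hh)$ and $C' = C'(\delta, \hh)$ so that $N(\omega, L) \leq C'(L/\ell(\omega))^{1+\delta}$ whenever $L < \kappa$. There is no ``quadratic bound'' to improve; the only gap to fill is that Eskin--Masur's estimate holds only for $L$ below a stratum-dependent threshold $\kappa$, and the lemma needs the bound for all $L < L_0$ with $L_0$ arbitrary. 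The paper's proof is therefore a short renormalization argument: cover $[0,2\pi)$ by $O(L_0^2)$ arcs of angular width about $\kappa^2/(4L_0^2)$, and for each arc apply $g_t r_{-\phi_i}$ with $e^t = 2L_0/\kappa$ to bring saddle connections pointing in that arc down to length $\leq \kappa$ on the renormalized surface, while the systole changes by at most the factor $2L_0/\kappa$. Summing the Eskin--Masur bound on each renormalized surface over the $O(L_0^2)$ arcs gives the result with $C = C(\delta, L_0)$. Your proposal never identifies this threshold issue, which is the entire content of the lemma once Eskin--Masur's theorem is in hand.

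Beyond that mismatch, there are concrete flaws in the proposal as a standalone argument. The step you flag as ``the main obstacle will be the careful bookkeeping needed to trade the naive quadratic bound for $(L/\ell(\omega))^{1+\delta}$'' is precisely the hard technical core of Eskin--Masur's proof and is left entirely unaddressed; as written, the proposal is a plan, not a proof. Your suggested invocation of Lemma~\ref{lem:ACM} in the non-cylinder case is also problematic: that lemma bounds $N(\omega, \epsilon_0)^2$ in terms of the length of the \emph{second} shortest (non-homologous) saddle connection $\gamma'$, and your claim that this length ``is comparable to $\ell(\omega)$ up to controlled loss'' is simply false in general --- $\ell(\gamma')$ can be much larger than $\ell(\omega)$, and in that case the Lemma~\ref{lem:ACM} bound $O(\ell(\gamma')^{-2N})$ says nothing useful about $\ell(\omega)^{-(1+\delta)}$. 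The right move is to cite Eskin--Masur Theorem~5.1 and then handle the $L_0 > \kappa$ regime by renormalizing, as the paper does.
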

\begin{proof} Fix $L_0 >0$, $\delta >0$, let $L<L_0$, and let $\omega \in \mathcal{H}$.   \cite[Theorem 5.1]{EskinMasur01} states that there is a $\kappa = \kappa(\hh) >0$ and $C' = C'(\delta, \mathcal{H})$ so that for $L< \kappa$,
$$N(\omega, L) \leq C' \left(\frac{L}{\ell(\omega)}\right)^{1+\delta}.$$
Thus, if $L_0 < \kappa$ we are done. We now consider the case $L_0 > \kappa$. Divide $[0,2\pi)$ into equally sized intervals $J_i$ of radius $\frac{\kappa^2}{4L_0^2}$ and let $\phi_i$ be the center of $J_i$. Note that there are $O(L_0^2)$ such intervals. For any $z \in \Lambda_{\omega} (L)$ choose $\phi_i$ with angle $\theta_z\in J_i$. 

\noindent\textbf{Rotating to (almost) vertical} We rotate $z$ to almost vertical via $r_{-\phi_i}$, and then shrink $r_{-\phi_i} z$ to have length less than $\kappa$ by applying $g_t r_{-\phi_i}$ where $e^{t}=\frac{2L_0}{\kappa}$, which we justify as follows. The largest possible imaginary component for $ r_{-\phi_i}z$ is bounded above by $|z| <L<L_0$, so
$$\im(z_{-\phi_i, t}) = \frac{\kappa}{2L_0} \im(r_{-\phi_i}z) \leq \frac{\kappa}{2}.$$
After rotating by $-\phi_i$, $r_{-\phi_i}z$ must lie in $B(0,L)$ with angle with the vertical in $\left(-\frac{\kappa^2}{4L_0^2}, \frac{\kappa^2}{4L_0^2}\right),$ so the real component of $r_{-\phi_i}z$ must satisfy
$$\re(z_{-\phi_i, t}) = \frac{2L_0}{\kappa}\re(r_{-\phi_i}z) \leq \frac{2L_0}{\kappa} \im(r_{-\phi_i}z) \tan\left(\frac{\kappa^2}{4L_0^2}\right) \leq \frac{2 L_0^2}{\kappa} \frac{\kappa^2}{4L_0^2} = \frac{\kappa}{2},$$
where we used $\tan(x) < x$ for $0<x<\frac{1}{4}$.
Thus 
$$|g_tr_{-\phi_i} z| \leq \kappa.$$

\noindent Moreover, for each $i$ the systoles satisfy
$$\ell(\omega) \leq \ell(g_tr_{\phi_i}\omega) \cdot  \frac{2L_0}{\kappa}.$$
So 
$$N(\omega, L)  = \displaystyle\sum_{i=1}^{O(L_0^2)} C'\left(\frac{L}{\ell(g_tr_{-\phi_i} \omega)}\right)^{1+\delta} \leq O(L_0^2) \left(\frac{2L_0}{\kappa}\right)^{1+\delta}C'\left(\frac{L}{\ell(\omega)}\right)^{1+\delta} .$$ Combining the terms $O(L_0^2)\left(\frac{2L_0}{\kappa}\right)^{1+\delta}C'$, we get a constant $C = C(\delta, L_0)$ as desired.

\end{proof}

\subsection{Geodesic flow length bounds for pairs}\label{sec:geodesicboundspairs}
 In this section we collect bounds that hold under certain assumptions for pairs of holonomy vectors.
 We begin by fixing some additional constants associated to $A$ which we will use in our proofs. Set  $$\rho_A = (8A +16A^2), \widehat{\rho}_A = \sqrt{2}\sqrt{1+\rho_A} + 2A, \  \epsilon_0=8\pi\widehat{\rho}_A.$$
 
\noindent Note that, by (\ref{eq:lemmasum2}), if $\left(A_t h_A\right)(z, w) >0$, then $$|w| \leq \sqrt{1+ \rho_A e^{-4t}}|z|. $$We define the \emph{large set} (Figure~\ref{fig:large})
$$L_A(e^t) = \left\{(z,w): \frac{e^t}{2} \leq |z| \leq \sqrt{2\cosh(2t)}, |w\wedge z|\leq A,   |w| \leq \sqrt{1+ \rho_A e^{-4t}}|z| \right\},$$
which satisfies, for $1 \le i \le 4$,
$$E_t^{i} \subseteq L_A(e^t) \quad \text{ and } P_A(\omega, e^t) \subseteq L_A(e^t) \cap \La_{\om}^2.$$ Recall that for $z \in \C$, $\theta_z = \pi/2 - \arg(z)$ is the angle so that $r_{\theta_z} z$ is vertical.

	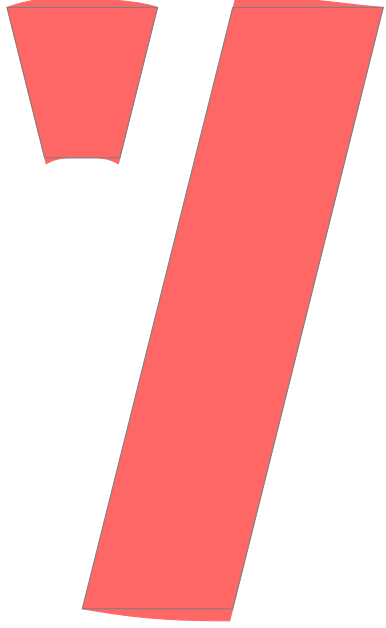
\begin{figure}[tb]\caption{Here we given an image depicting the large set $L_A(e^t)$ in red. The trapezoid is the set of $z \in g_{-t}\mathcal{T}$ and the parallelogram is the set of $w\in \mathcal{R}_A(z)$ for a fixed $z$. The set of $z$ in $L_A(z)$ contains both the trapezoid and red points outside.}
	\label{fig:large}
	\centering
\begin{tikzpicture}[scale = 2.0]

\filldraw[red!60] (.24, 4*.24) -- (.5, 2) to[out=160,in=180] (0,2.06) to[out=180,in=20] (-.5, 2)--(-.24, 4*.24) to[out=30,in=180] (0,1) to[out=0,in=150] (.24,4*.24) ;
\filldraw[red!60] (1.02,4*1.02-2) to[out=0,in=175](2, 2)--(1-.02, -4*.02 -2)to[out= 180,in=-10] (0, -2);
\draw[gray](1/4, 1)--(1/2, 2)--(-1/2, 2)--(-1/4, 1)--cycle; 
\draw[gray](1,2)--(2,2)--(1,-2)--(0,-2)--(1,2);
\node at (0,-2.5) {$L_A(e^{t})$};

\end{tikzpicture}
\end{figure}

 \begin{lemma}\label{lem:pairs} For all $(z, w) \in L_A(e^t)$
 $$\frac{1}{2} \leq |g_t r_{\theta_z} z| \leq \sqrt{2},$$ and
 	$$|g_t r_{\theta_z} w| \leq \widehat{\rho}_A .$$
 \end{lemma}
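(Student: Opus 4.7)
The plan is to normalize $z$ using the rotation $r_{\theta_z}$, which sends $z$ to a purely imaginary vector, and then to compute the effect of $g_t$ coordinate-wise on the real and imaginary parts. Set $r_{\theta_z} z = |z|\,i$ and write $r_{\theta_z} w = u+iv$ with $u,v\in\R$. Then $g_t r_{\theta_z} z = e^{-t}|z|\,i$ and $g_t r_{\theta_z} w = e^t u + i e^{-t} v$. The bound on $z$ is immediate from the defining inequalities of $L_A(e^t)$: since $e^t/2 \le |z|\le \sqrt{2\cosh(2t)}$, we get $\tfrac12 \le e^{-t}|z|\le \sqrt{2e^{-2t}\cosh(2t)} = \sqrt{1+e^{-4t}}\le\sqrt{2}$ for $t\ge 0$.

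The bound on $w$ splits into bounds on the real and imaginary parts of $r_{\theta_z}w$ separately. For the real part, the key point is that the wedge product is rotation invariant, so $A \ge |z\wedge w| = |r_{\theta_z} z \wedge r_{\theta_z} w| = |z|\cdot |u|$, and therefore $e^t|u| \le \tfrac{e^t A}{|z|}\le 2A$ using $|z|\ge e^t/2$. For the imaginary part, I will use $|v|\le |r_{\theta_z}w| = |w|$ together with the two inequalities defining membership in $L_A(e^t)$, namely $|w|\le \sqrt{1+\rho_A e^{-4t}}\,|z|$ and $|z|\le\sqrt{2\cosh(2t)}$, which combine to give
\[
e^{-t}|v| \le e^{-t}\sqrt{1+\rho_A e^{-4t}}\sqrt{2\cosh(2t)} = \sqrt{(1+\rho_A e^{-4t})(1+e^{-4t})} \le \sqrt{2(1+\rho_A)}
\]
for $t\ge 0$. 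Adding the two contributions via the triangle inequality $|g_t r_{\theta_z} w|\le e^t|u|+e^{-t}|v|$ yields the claimed bound $|g_t r_{\theta_z} w|\le 2A+\sqrt{2}\sqrt{1+\rho_A} = \widehat{\rho}_A$.

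Nothing here should be a genuine obstacle; the estimate is a routine computation, and the only subtlety is the bookkeeping that identifies the correct constant $\widehat{\rho}_A$. The reason for choosing to rotate by $\theta_z$ rather than some other angle is precisely that it lets us read the wedge-product bound as a pointwise bound on the real coordinate of $r_{\theta_z} w$, so the $e^t$-expansion applied to that coordinate is controlled by $A/|z|$ rather than by $|w|$.
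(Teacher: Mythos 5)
Your proof is correct and follows essentially the same route as the paper's: rotate $z$ to the vertical axis, bound the real part of $r_{\theta_z}w$ via rotation-invariance of the wedge product together with $|z|\ge e^t/2$, bound the imaginary part via $|w|\le\sqrt{1+\rho_A e^{-4t}}\,|z|$ and the upper bound on $|z|$, and combine by the triangle inequality. The only cosmetic difference is that you keep the estimates in terms of $|z|$ rather than rewriting them in terms of $|g_t r_{\theta_z}z|$ as the paper does; the constants come out identically.
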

 \begin{proof} By definition $r_{\theta_z} z = |z| i$. Since  $$\frac{e^t}{2} \leq |z| \leq e^t\sqrt{1+e^{-4t}},$$ we have $$|g_{t} r_{\theta_z} z| = |e^{-t} |z| i| = e^{-t} |z|,$$ so $$\frac{1}{2}\leq |g_tr_{\theta_z} z| \leq \sqrt{1+e^{-4t}} \leq \sqrt{2}.$$
By Lemma~\ref{lem:equality}, $|w| \leq |z|\sqrt{1+\rho_Ae^{-4t}}$, so the vertical component of $r_{\theta_z} w$ is at most $|z|\sqrt{1+\rho_A e^{-4t}}$, so $$\im(g_tr_{\theta_z}w) \leq |g_tr_{\theta_z}z|\sqrt{1+\rho_A e^{-4t}}.$$ Since $|z \wedge w| \le A$,  $|\re(g_tr_{\theta_z} w)|\cdot |g_tr_{\theta_z} z| \leq A.$ Hence
 	$$|g_tr_{\theta_z} w| \leq |\im(g_tr_{\theta_z} w)| + |\re(g_tr_{\theta_z} w)| \leq |g_tr_{\theta_z} z| \sqrt{1+\rho_A e^{-4t}} + \frac{A}{|g_tr_{\theta_z} z|} \leq \sqrt{2}\sqrt{1+\rho_A} + 2A=\widehat{\rho}_A  .$$ 
 \end{proof}

\subsection{Reducing to finitely many surfaces}\label{sec:reducing}
This next lemma shows that within a fixed range of angles $\theta$ and times  $t$, the magnitude $|g_tr_\theta z |$ cannot change more than an explicitly quantifiable amount.
\begin{lemma}
\label{lem:expand}
Given $t>0$, $\theta_0 \in [0, 2\pi)$, define $$I_t(\theta_0) = \left(\theta_0 - \pi e^{-2t}, \theta_0 + \pi e^{-2t}\right).$$ Then for any $t > 0$, any $z \in \C$, $\theta\in I(\theta_0) $, and $s \in [t, t+ \log 2)$  we have $$\frac{1}{8\pi} \leq \frac{|g_s r_\theta z|}{|g_t r_{\theta_0} z|}\leq 8\pi.$$ 
\end{lemma}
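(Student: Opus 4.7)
The plan is to reduce both inequalities to a single operator-norm estimate by expressing the ratio as an action of one $SL(2,\mathbb R)$-matrix on one vector. Set $\phi = \theta - \theta_0$ and let
$$M \;=\; g_s\, r_\theta\, (g_t\, r_{\theta_0})^{-1} \;=\; g_s\, r_\phi\, g_{-t}.$$
Then $g_s r_\theta z = M \cdot (g_t r_{\theta_0} z)$, so if $\|\cdot\|$ denotes the operator norm on $2\times 2$ real matrices,
$$\frac{1}{\|M^{-1}\|}\;\le\;\frac{|g_s r_\theta z|}{|g_t r_{\theta_0} z|}\;\le\;\|M\|.$$
Because $M\in SL(2,\mathbb R)$, its singular values are $\sigma,1/\sigma$ with $\sigma\geq 1$, and hence $\|M^{-1}\| = \|M\|$. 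Thus both bounds follow from the single estimate $\|M\|\leq 8\pi$.

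To prove that estimate, I would write $M$ out explicitly:
$$M \;=\; \begin{pmatrix} e^{s-t}\cos\phi & -e^{s+t}\sin\phi \\ e^{-(s+t)}\sin\phi & e^{t-s}\cos\phi \end{pmatrix}.$$
The hypotheses $s-t\in(0,\log 2)$ and $|\phi| < \pi e^{-2t}$ give $e^{s-t}\leq 2$, $e^{t-s}\leq 1$ and $|\sin\phi|\leq \pi e^{-2t}$. The entry to watch is the $(1,2)$ one, which has the exploding factor $e^{s+t}$; but the smallness of $\sin\phi$ is tuned precisely to cancel it:
$$e^{2(s+t)}\sin^2\phi \;\le\; e^{2(s+t)}\pi^2 e^{-4t} \;=\; \pi^2 e^{2(s-t)} \;\le\; 4\pi^2.$$
The remaining three entries satisfy $|M_{11}|^2\leq 4$, $|M_{22}|^2\leq 1$, and $|M_{21}|^2 \leq \pi^2 e^{-2(s+t)-4t}\leq \pi^2$. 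Summing gives $\|M\|^2\leq \|M\|_F^2 \leq 5+5\pi^2 < 64\pi^2$, so $\|M\|\leq 8\pi$ as required.

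There is no serious obstacle here: the statement is essentially a bilipschitz estimate, and everything is a direct two-by-two matrix computation. The one conceptual point worth flagging is the use of $\det M = 1$ to convert what looks like two separate inequalities (a lower and an upper bound on the ratio) into the single task of bounding $\|M\|$. The choice of the radius $\pi e^{-2t}$ for $I_t(\theta_0)$ is exactly what makes $e^{s+t}\sin\phi$ bounded uniformly in $t$ on the given time window, which is why the constant $8\pi$ is independent of $t$.
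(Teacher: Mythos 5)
Your proposal is correct. The structural reduction is the same as the paper's: both express the ratio as the action of $M = g_s r_{\theta-\theta_0} g_{-t}$ on a single vector and reduce to an operator-norm estimate. Where you differ is in how you bound $\|M\|$. The paper factors $M = (g_s r_\psi g_{-s}) g_{s-t}$, observes $\|g_{s-t}\|_{\mathrm{op}} \le 2$, and then computes the operator norm of the conjugated rotation $g_s r_\psi g_{-s}$ exactly by finding the top eigenvalue of $(g_s r_\psi g_{-s})^T(g_s r_\psi g_{-s})$, using the $\operatorname{tr}/2 + \sqrt{(\operatorname{tr}/2)^2 - 1}$ formula for the leading eigenvalue of a determinant-one symmetric matrix. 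You instead write out all four entries of $M$ directly and bound the Frobenius norm, which dominates the operator norm; this avoids the eigenvalue formula entirely and is more elementary, at the cost of not producing a sharp bound on the conjugated-rotation factor (which is irrelevant here since only a crude constant is needed). Both computations identify the same danger entry $e^{s+t}\sin\phi$ and use the $\pi e^{-2t}$ window radius to tame it. You also make explicit, via the singular-value observation that $\det M = 1 \Rightarrow \|M^{-1}\| = \|M\|$, why a one-sided operator-norm bound suffices for the two-sided inequality; the paper leaves this step implicit when it says ``it suffices to control the operator norm.'' That explicitness is a minor improvement in exposition.
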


 \begin{proof} First write $$g_s r_{\theta} z = g_s r_{\theta- \theta_0} g_{-t} (g_t r_{\theta_0} z).$$ Thus it suffices to control the operator norm of $g_s r_{\psi} g_{-t}$ where $|\psi| = |\theta-\theta_0| \le \pi e^{-2t}.$ We proceed to do that. We note that $g_s r_{\psi} g_{-t} = g_s r_{\psi} g_{-s} g_{s-t}$. Note that $\|g_{s-t}\|_{op} \le 2$, and $$g_s r_{\psi} g_{-s} = \begin{pmatrix} \cos \psi & -e^{2s} \sin \psi \\ e^{-2s} \sin \psi & \cos \psi\\ \end{pmatrix}.$$ The operator norm of a matrix $A$ is the square root of the largest eigenvalue of $A^{T}A$. In this case $$(g_s r_{\psi} g_{-s})^T(g_s r_{\psi} g_{-s} ) = \begin{pmatrix} \cos^2 \psi  + e^{-4s} \sin^2 \psi& -\sin(2\psi) \sinh(2s)  \\ -\sin(2 \psi)\sinh(2s)  & \cos^2 \psi + e^{4s} \sin^2(\psi) \\ \end{pmatrix}.$$
 For a determinant $1$ matrix $M$, with $|\tr(M)|>2$ the largest eigenvalue $\lambda^+(M)$ is given by $$\lambda^+(M) = \frac{\tr(M)}{2} +\sqrt{\left(\frac{\tr(M)}{2}\right)^2 -1}.$$ Applying this to $M = (g_s r_{\psi} g_{-s})^T(g_s r_{\psi} g_{-s} )$, we have $$\frac{\tr(M)}{2} = \cos^2 \psi + \cosh(4s) \sin^2\psi = 1 + (\cosh(4s)-1)\sin^2\psi = 1+ 2\sinh^2(2s)\sin^2\psi,$$ so $$\lambda^+(B)  = 1 + 2 \sinh^2(2s) \sin^2\psi + \sqrt{2} \sinh(2s)\sin\psi.$$
Since $|\psi| < \pi e^{-2t}$, $| \sin\psi| < \pi e^{-2t}$, and $\sinh(2s) < 2e^{2t},$ so \begin{align*} \lambda^+(M) &\le 1+2\pi^2e^{-4t}e^{4t} + \sqrt{2}2e^{2t} \pi e^{-2t} \\ &\le 1 + 8\pi^2 + 2 \sqrt 2 \pi. \end{align*} Therefore, $$\|g_s r_{\psi} g_{-s}\|_{op} = \sqrt{\lambda^+(M) } \le \sqrt{1+8\pi^2 + 2 \sqrt 2 \pi} \le 4\pi,$$ and so $$\|g_s r_{\psi} g_{-t} \|_{op} \le \|g_s r_{\psi} g_{-s}\|_{op} \|g_{s-t}\|_{op} \le 8\pi.$$
\end{proof}

\subsection{Proof of Proposition~\ref{prop:quadratic}} \label{sec:proof4.2}
Let $$\mathcal{H}_\epsilon = \{ \omega \in \hh: \ell(\omega) \leq \epsilon\}$$ denote the $\epsilon$-thin part of the stratum. Before proceeding with the proof, we define three families of sets exhausting the thin part $\hh_{\epsilon}$ of the stratum, in terms of the length of the shortest and second shortest non-homologous saddle connections.  Recall we have fixed a scale parameter $0<\sigma <1$.  For each integer $j>0$ let $F(j)$ be the set of $\om$ with a shortest saddle connection $\gamma_0(\om)$ with length between $\sigma^{j+1}$ and $\sigma^j$ and a non-homologous saddle connection of length at most $\sigma^{\frac{j}{4N}}$. Recalling that  $$s_{\sigma}(\om)  = \frac{\log \ell(\om)}{\log \sigma}, \tilde{s}_{\sigma}(\om) = \frac{\log \tilde{\ell}(\om)}{\log \sigma},$$ we have \begin{equation}\label{eq:thindef} F\left(j\right) = \left\{\omega \in \hh: \lfloor s_{\sigma}(\om) \rfloor = j, \tilde{s}_{\sigma}(\omega) < \frac {j}{4N} \right\} .\end{equation}

\noindent Next let
$G(j)$  be the set of $\om$ where the shortest saddle connection $\gamma_0(\om)$ has length between $\sigma^{j+1}$ and  $\sigma^j$, the shortest  non-homologous saddle connection $\gamma_1(\om)$ has length at least $\sigma^{\frac{j}{4N}}$, and $\gamma_0(\om)$ either does not bound a cylinder or bounds a cylinder of width at least $\epsilon_0$. That is,
\begin{equation}\label{eq:thindefG} \nonumber G(j) = \left\{\omega \notin BC_{\epsilon_0}: \lfloor s_{\sigma}(\om) \rfloor = j, \tilde{s}_{\sigma}(\omega) > \frac {j}{4N} \right\} .\end{equation}

 \noindent Finally let $H(j)$ be the set of $\om$ where the shortest saddle connection $\gamma_0(\om)$ has length between $\sigma^{j+1}$ and $\sigma^j$ and bounds a cylinder of width at most $\epsilon_0$, $$H(j) = \{ \omega \in BC_{\epsilon_0}: \lfloor s_{\sigma}(\om) \rfloor = j\}. $$ 
 
\noindent Now choose $j_0$ so that  $\sigma^{j_0+1}\leq \epsilon<\sigma^{j_0}$. Thus $$\mathcal{H}_\epsilon \subset \bigcup_{j\geq j_0}F(j)\cup G(j)\cup H(j).$$ 

\begin{proof}

\noindent\textbf{Choosing parameters.} We begin by  fixing  a small $\epsilon>0$. Choose $\kappa<1$ and $\delta < \frac{1}{4N}$ small enough that $\kappa+\delta(1+\kappa) < \frac{1}{4N}$. Fix $L_0>0$.

\noindent\textbf{Breaking up the set of intervals.} We have partitioned the circle into $e^{2t}$ intervals centered at angles $\theta_i$. 
We will break up the set indices $i$ into four  subsets depending on the part of $\mathcal{H}$ the surface  $g_t r_{\theta_i}\omega$ lies in. For each of the four  sets we will count the number of intervals $I_i$ in that set and
for $z$ the holonomy vector that is the first element in a   pair  $(z,w)$ we consider the interval $I_i$ so that $\theta_z\in I_i$. Then for that $I_i$ we count the number of pairs $(z,w)$ with $z$ determining $I_i$.    Multiplying by the number of $I_i$ will give us the desired bound.

\paragraph*{\bf{Thick Case}}
Let $i$ be an index such that $g_tr_{\theta_i}\omega\in \mathcal{H}\setminus \mathcal{H}_\epsilon$. That is, the surface $g_t r_{\theta_i} \om$ is $\epsilon$-thick.  If $\beta$ is a saddle connection with holonomy vector $z = |z| e^{2\pi i \theta_z}$ so that $|z|\leq e^t$ and $\theta_z\in I_i$, then by the second conclusion of Lemma~\ref{lem:pairs}, on $g_tr_{\theta_i} \omega$, the length of $\beta$ is at most $\widehat{\rho_A}$. The same bound holds  for any $\gamma$ with holonomy $w$ satisfying $|z\wedge w|\leq A$ and of length at most $e^t$. The number of saddle connections of length at most $\widehat{\rho}_A$ on a surface in the thick part is  bounded in terms of $\epsilon$ and $\widehat{\rho}_A$. This is an easy consequence for example of the compactness of the thick part. The bound for the number of pairs is the square of this quantity which of course is still uniformly bounded.   Thus there are  $O(1)$ pairs of saddle connections with the above property. Since there are $O(e^{2t})$ such intervals $I_i$ we have the desired bound. 

\paragraph{\bf{Thin Case}}
We now turn to the much more complicated proof in the case that the surface $g_tr_{\theta_i}\omega\in \hh_\epsilon$, that is, is in the thin part.

\begin{rmk} We remark that at this point in the published version \cite{AFM23} on page 39, we treated each case $F(j)$, $G(j)$ and $H(j)$ separately. Along the way in the case of $F(j)$, we used Theorem~\ref{thm:Nevo}, but we did not convey in the published version that the choice of $t_0$ depends on $j$, invalidating \cite[Equation 6.7]{AFM23} which required $t_0$ independent of $j$. However, we are able to fix our proof following a suggestion of Jon Chaika by combining the cases into a single summatory function before applying Nevo's theorem~\ref{thm:Nevo}. \end{rmk}

\paragraph*{\bf Counting pairs in the thin set} Following the notation from Section~\ref{sec:Notation}, for each $j$,  define the number of pairs $N_j^*$ by
\begin{align*} N_j^F(\omega) &= \#\left\{(z,w)\in L_A(e^t) \cap \La_{\om}^2: \exists i \text{ so that } \theta_z \in I(\theta_i), g_tr_{\theta_i}\omega \in F(j)\right\} \\  N_j^G(\omega) &= \#\left\{(z,w)\in L_A(e^t) \cap \La_{\om}^2: \exists i \text{ so that } \theta_z \in I(\theta_i), g_tr_{\theta_i}\omega \in G(j)\right\} \\ N_j^H(\omega) &= \#\left\{(z,w)\in L_A(e^t) \cap \La_{\om}^2: \exists i \text{ so that } \theta_z \in I(\theta_i), g_tr_{\theta_i}\omega \in H(j)\right\} .\\\end{align*}

\paragraph*{\bf Scale constants} For each positive integer $j$ let $$c_j^H= c_j^F = \sigma^{-j(2+ 2\delta)}  \mbox{ and } c_j^G=\sigma^{-j}.$$ Then we have

\begin{lemma}
\label{lem:upperbound}
For each $j$ and $i$, if $g_t r_{\theta_i} \omega \in *(j)$, then
$\#\left\{(z,w)\in L_A(e^t) \cap \La_{\om}^2: \theta_z \in I(\theta_i)\right\} \ =O(c_j^*)$

\end{lemma}
\begin{proof}

We start with the case of $F(j)$ and $H(j)$, which we can treat similarly. Consider any $(z,w)\in L_A(e^t)$ with the property that $\theta_z \in I(\theta_i).$ 
Lemma~\ref{lem:pairs} and Lemma~\ref{lem:expand} show that $|g_tr_{\theta_i}z| \leq 8\pi\sqrt{2}$ and $|g_tr_{\theta_i}w| \leq 8\pi\widehat{\rho}_A.$ Apply Lemma~\ref{lem:EM} with $L_0 = 8\pi\widehat{\rho}_A$. Thus for each $j$ and $i$, if $g_t r_{\theta_i} \omega \in F(j)$ or $g_t r_{\theta_i} \omega \in H(j)$,
\begin{equation}\label{eq:FNjbound}
\#\left\{(z,w)\in L_A(e^t) \cap \La_{\om}^2: \theta_z \in I(\theta_i)\right\} \leq N(g_{t}r_{\theta_i}\omega,L_0)^2= O\left(\sigma^{-j(2+ 2\delta)}\right)=O(c_j^F) = O(c_j^H),
\end{equation}
where the constants in the big $O$ notation depend only on $\delta$, $A$, and in fact are independent of the interval index $i$ and the surface $\omega$. 

We now consider $G(j)$. The second shortest saddle connection on $g_tr_{\theta_i}\omega$ has length at  least $\sigma^{\frac{j}{4N}}$ by definition. Lemma~\ref{lem:ACM} gives the number of saddle connections of length to be at most $\epsilon_0 = 8\pi\widehat{\rho}_A$ is $O((\sigma^{\frac{j}{4N}})^{-2N})=O(\sigma^{-j/2}).$  Thus the total number of pairs is bounded above by $O(\sigma^{-j})=O(c_j^G)$, where the implicit constants depend only on the parameters $\sigma$ and $j$.
\end{proof}

\paragraph*{\bf Building functions} 

For each $j$, define the function $\psi_j: \mathcal{H}\to \mathbb{R}$  by $\psi_j = c_j^F \one_{F(j)} + c_j^G \one_{G(j)} + c_j^H \one_{H(j)}$. We then consider the sum $$\psi = \sum_{j=1}^\infty \psi_j.$$ 
There is a corresponding function $\phi$ using $N_j^*$ in place of $c_j^*$ which records the actual count.  Lemma~\ref{lem:upperbound}  says that $\phi(\omega)\leq \psi(\omega)$.
 Assume the circle averages $A_t\psi$ converge as $t\to\infty$.   Since there are $e^{2t}$ intervals, the assumption together with Lemma~\ref{lem:upperbound} says that an upper bound on the total number of pairs which land in the thin part  
for a given surface $\omega$ is  $O(e^{2t}A_t\psi)$ for large enough $t$. We now prove the assumption using Theorem~\ref{thm:Nevo}.

\paragraph*{\bf $K$-finiteness and integrability} Since each $F_j$, $G_j$, and $H_j$ is rotation invariant, the function $\psi$ is $K$-finite. We claim $\psi \in L^{1+\kappa}$ for some $\kappa > 0$. Since the support of the $\psi_j$ are disjoint, $\psi^{1+\kappa} = \sum_{j=1}^\infty \psi_j^{1+\kappa}.$ Using the fact that the functions are nonnegative we can interchange limit and integral to compute
\begin{equation}\label{eq:psidecomp}
    \int_{\mathcal{H}} \psi^{1+\kappa} \,d\mu = \sum_{j=1}^\infty \int_{\mathcal{H}} \psi_j^{1+\kappa}
    \,d\mu = \sum_{j=1}^\infty (c_j^F)^{1+\kappa}\mu(F(j)) + (c_j^G)^{1+\kappa} \mu(G(j)) + (c_j^H)^{1+\kappa} \mu(H(j)).\end{equation}
We now address each term of the right hand side of \eqref{eq:psidecomp} separately. Similar to our definition of the set $F(j)$ define  $E(j)$ to be the set of $\omega$ with a saddle connection of length between $(8\pi)^{-1} \sigma^{j+1}$ and  $8\pi \sigma^j$,  and a nonhomologous saddle connection of length at most $8\pi \sigma^\frac{j}{N}.$ By Lemma~\ref{lem:MS},
\begin{equation}\label{eq:Fmeasurebound} \mu(F(j)) \leq \mu(E(j)) = O\left(\sigma^{2j + 2\frac{j}{4N}}\right) = O\left(\sigma^{2j} \sigma^{\frac{j}{2N}}\right).
\end{equation}
To bound the measure of $G$, we use the estimate that the shortest saddle connection has size $\sigma^j$, so $\mu(G(j)) \leq \sigma^{2j}$ by Lemma~\ref{lem:MS}. 
Notice that in $H$ the shortest saddle connection has size $\sigma^{j}$, and since it bounds a cylinder of width at most $\epsilon_0$,  by \cite[Proof of Theorem 10.3]{MasurSmillie91}, we have
$$\mu(H(\sigma^j))= O\left(\sigma^{3j}\right).$$

\paragraph*{\bf Integrability} Applying the estimates from each case to \eqref{eq:psidecomp}, the definition of the constants $c_j^*$  and. our choices of $\kappa$ and $\delta$ imply
\begin{align}
   & \int_{\mathcal{H}} \psi^{1+\kappa} \,d\mu =  O\left((c_j^F)^{1+\kappa}\mu(F_j)+(c_j^G)^{1+\kappa}\mu(G_j)+(c_j^H)^{1+\kappa}\mu(H_j)\right) \nonumber \\ &=O\left( \sum_{j=1}^\infty \sigma^{-j(2\delta)(1+\kappa){-2j\kappa}} \sigma^{\frac{j}{2N}} + \sigma^{-j(1+\kappa)} \sigma^{2j} + \sigma^{-j(2\delta)(1+\kappa) {-2j\kappa}} \sigma^{j}\right) < \infty. \label{eq:geometric_series}\end{align} Now we are in a situation where we can apply Theorem~\ref{thm:Nevo}. Choose a mollifier $\eta \in C_0^\infty(\mathbb{R})$ such that $\eta(t)=1$ on $[-\log(2), 0]$ and $\int\eta(t)dt=1$ so that by Theorem~\ref{thm:Nevo}, for almost 
every $\omega\in \mathbb{H}$, there is some $t_0$ large enough depending on $\omega$  so that for $t>t_0$
\begin{equation}\label{eq:new_upper_bounds}\int_{t}^{
t+\log(2)}e^{2t}(A_s \psi)(\omega) ds \leq e^{2t}\int_{-\infty}^\infty \eta(t-s) (A_s \psi )(\om)ds \leq 3e^{2t}\int_{\mathcal \hh} \psi d\mu = 3e^{2t} \sum_{j=1}^\infty \int_{\mathcal{H}} \psi_j \,d\mu = O(e^{2t}).\end{equation}
Switching the order of integration

\begin{equation}
\label{int:bound}
\sum_{i=1}^{e^{2t}} \int_{I(\theta_i)} \int_t^{t+\log(2)} \psi(g_sr_\theta \omega)\,ds\,d\theta=e^{2t}\int_{t}^{
t+\log(2)}(A_s\psi)(\omega) ds=O(e^{2t}) \end{equation}

\paragraph*{\bf Fattening} 
Now at this point, we replace $\psi$ with a function defined by constants that are a fixed multiple $8\pi$ of $c_j^F, c_j^H,$ and $c_j^G$ on the respective domains of $F, G, H$.  
Just as we considered $E(j)$ related to $F(j)$, we can do the same  type of fattening to guarantee that by  fattening the domains of $F, G, H$, we then have a function that is constant on each interval $I(\theta_i)$. For example in the case of $F(j)$, if there is some $i \in \{1, \ldots, e^{2t}\}$ so that $g_t r_{\theta_i}(X,\omega) \in F(j)$, then Lemma~\ref{lem:expand} guarantees that $g_sr_\theta \omega \in E(j)$ whenever 
$$(\theta, s) \in I(\theta_i) \times [t, t+\log(2)].$$
Then \eqref{int:bound} and the fact that $\phi\leq \psi$ gives the desired  upper bound 
$$e^{2t}A_t(\phi) (\log(2)) \leq \int_t^{t+\log(2)} e^{2t} A_s(\psi)(\omega)\,ds = O(e^{2t}).$$
This finishes the proof of  Proposition~\ref{prop:quadratic}.
\end{proof}

\color{black}

\noindent The  next corollary is a small modification of Proposition~\ref{prop:quadratic}, which for a given $\widehat{\epsilon}$ says \begin{Cor}\label{cor:thin} Fix notation as above. Then, {for almost every $\omega \in \mathcal H$} the count of pairs in the thin part is bounded as follows: {For all $t$ sufficiently large, we have}

	$$\#\left\{(z,w)\in L_A(e^t) \cap \La_{\om}^2:\exists\, i \text{ so that } \theta_z \in I(\theta_i),\, g_tr_{\theta_i}\omega \in \mathcal{H}_{\widehat \epsilon}\right\}
	=O(\widehat{\epsilon}^{\frac{1}{2N} - 2\delta}e^{2t}).$$ 
Here, the implied constant is independent of $\omega$.
\end{Cor}

\begin{proof}

	We follow exactly the proof of  Proposition~\ref{prop:quadratic}  where we have the additional assumption now that $\sigma^j \leq \widehat{\epsilon}$. This comes into effect in \eqref{eq:geometric_series}
when we restrict  $\sigma^j<\widehat\epsilon$ which limits the set of possible $j$ to $j\geq \frac{\log(\widehat{\epsilon})}{\log(\sigma)}.$ So then the computation (bounding the geometric series by a multiple of the first term) from \eqref{eq:geometric_series} implemented in \eqref{eq:new_upper_bounds} yields an upper bound of
\begin{equation*}
 O\left(e^{2t} \sum_{j=\frac{\log\widehat{\epsilon}}{\log \sigma}}^\infty \sigma^{j\left(\frac{1}{2N} - 2\delta\right)}  \right)  =  O(\widehat{\epsilon}^{\frac{1}{2N}-2\delta}e^{2t}) 
\end{equation*}
\end{proof}
\color{black}

\subsection{Subquadratic decay in the thick part of the stratum} \label{sec:subquadraticthick}

As a complement to Corollary~\ref{cor:thin} which gives sufficient bounds in the thin part of the stratum, we want to understand decay of pairs of saddle connections in the thick part of the stratum, that is the set of surfaces where the systole has length at least $\epsilon$. In this section we will give an estimate on the measure of a subset of the thick part of the stratum which correspond to each of the four error terms introduced in Section~\ref{sec:error}. We prove the error terms satisfy the definitions given in this section in the proof of Proposition~\ref{prop:Mjbounds}.

To this end we will prove the following  lemma which gives measure bounds for pairs of saddle connections near the boundary of $R_A(\mathcal{T})$. Given $L, \widehat\epsilon,\epsilon'>0$, and $L'\in\{\frac{1}{2},1\}$ define $\Omega(\widehat\epsilon,\epsilon',L,L')$ to be the set  of surfaces $\omega$ such 
that $\omega$ is $\widehat\epsilon$-thick, and
there are $(z, w) \in \La_{\om}^2 \cap B(0, L)^2$ where at least one of the following holds:
\begin{enumerate}
\item Ratio of vertical components is bounded: $$1-\epsilon'\leq \frac{|\im  w|}{|\im z|}\leq 1+\epsilon'$$ 
\item Area is close to $A$ $$(1-\epsilon')A\leq |z\wedge w|\leq (1+\epsilon')A$$
\item Vertical component of $z$ is close to $L'$:
$$|\im (z) -L'|<\epsilon'$$
\item  Vertical and horizontal components are close: $$(1-\epsilon')\im z\leq |\re z|\leq (1+\epsilon')\im z.$$

\end{enumerate}

\begin{lemma}
\label{lem:bad}
Let $D$ be the complex dimension of the stratum. For each $L$, there exists $C$ so that for all $\widehat\epsilon,\epsilon'$  $$\mu\left(\Omega(\widehat\epsilon,\epsilon',L,L')\right) \le C\left(\frac{\epsilon'}{\widehat\epsilon^{2+2D}}\right).$$
\end{lemma}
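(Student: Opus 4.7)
The set $\Omega = \Omega(\widehat\epsilon, \epsilon', L, L')$ decomposes as $\Omega = \bigcup_{i=1}^4 \Omega_i$, where $\Omega_i$ consists of $\widehat\epsilon$-thick surfaces admitting a pair $(z,w) \in \Lambda_\omega^2 \cap B(0,L)^2$ satisfying the $i$-th condition. It suffices to bound each $\mu(\Omega_i)$ separately, since the arguments are structurally identical. For each $i$, define $B_i \subset B(0,L)^2 \subset \C^2$ as the set of $(z,w)$ satisfying condition $i$. Each $B_i$ is the $\epsilon'$-neighborhood of a codimension-one semi-algebraic subset of $\C^2 \cong \R^4$ (respectively: $|\im w| = |\im z|$, $|z \wedge w| = A$, $\im z = L'$, and $|\re z| = \im z$), so a direct Fubini computation gives $\mathrm{Leb}_{\C^2}(B_i) = O_L(\epsilon')$.

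The strategy is to reduce to a local counting problem via period coordinates, in which $\mu$ is (a constant multiple of) Lebesgue measure on $\R^{2D}$. First, cover $\hh_{\widehat\epsilon}$ by period-coordinate balls $U_\alpha$ of radius $c\widehat\epsilon$ (for a small absolute constant $c$, chosen so that homotopy classes of saddle connections of length at most $2L$ are constant on each $U_\alpha$). Since $\mu(\hh) < \infty$, the number of such balls needed is $O(\widehat\epsilon^{-2D})$. On each $U_\alpha$, Lemma~\ref{lem:EM} bounds the number of homotopy classes of saddle connections of length at most $L$ by $O(\widehat\epsilon^{-1-\delta})$, so the number of pairs of such classes is at most $K_\alpha = O(\widehat\epsilon^{-2-2\delta})$ for any $\delta > 0$.

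For each pair of classes $(\gamma_1, \gamma_2)$ on $U_\alpha$, the holonomy map $\Phi_{\gamma_1,\gamma_2}(\omega) = (z_{\gamma_1}(\omega), z_{\gamma_2}(\omega))$ is $\R$-linear in period coordinates. Writing
\[
\widehat{\one_{B_i}}(\omega) = \sum_{(\gamma_1,\gamma_2)} \one_{B_i}(\Phi_{\gamma_1,\gamma_2}(\omega))
\]
on $U_\alpha$ and using linearity and Fubini, the measure of $\{\omega \in U_\alpha : \Phi_{\gamma_1,\gamma_2}(\omega) \in B_i\}$ is at most a bounded constant times $\mathrm{Leb}(B_i) = O(\epsilon')$; here the bounded Jacobian reflects that, on a fixed ball in the thick part, basis periods and holonomies of bounded-length saddle connections are related by integer matrices of uniformly bounded norm. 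Since $\Omega_i \cap U_\alpha \subseteq \{\widehat{\one_{B_i}} \geq 1\}$, Markov's inequality gives
\[
\mu(\Omega_i \cap U_\alpha) \leq \int_{U_\alpha} \widehat{\one_{B_i}}\, d\mu \leq K_\alpha \cdot O(\epsilon') = O(\epsilon' / \widehat\epsilon^{2+2\delta}).
\]
Summing over the $O(\widehat\epsilon^{-2D})$ balls $U_\alpha$ and absorbing $\delta$ into the constant yields $\mu(\Omega_i) = O(\epsilon' / \widehat\epsilon^{2+2D})$, and then summing over $i = 1, \dots, 4$ completes the proof.

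\textbf{Main obstacle.} The delicate point is the uniform $O(\epsilon')$ bound per homotopy pair: the linear map $\Phi_{\gamma_1, \gamma_2}$ could in principle be rank-deficient (for instance when $\gamma_1, \gamma_2$ are homologous or their holonomies span a proper real subspace), in which case the measure of the preimage is controlled by a lower-dimensional analogue but the Jacobian factor degenerates. These degenerate pairs must be handled separately, either by observing that they contribute zero to $B_i$ (since $B_i$ intersects lower-dimensional strata in a still-small set by the same codimension heuristic) or by a crude count exploiting that such pairs are genuinely rare. Apart from this, the proof is a standard covering-and-Fubini calculation, with the exponent $2+2D$ arising as the sum of the $2D$ from covering the thick part and the $2+o(1)$ from the pair count via Lemma~\ref{lem:EM}.
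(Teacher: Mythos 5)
Your approach is genuinely different from the paper's. You cover the $\widehat\epsilon$-thick part by $O(\widehat\epsilon^{-2D})$ period-coordinate balls, count pairs of saddle-connection homotopy classes per ball via Lemma~\ref{lem:EM}, pull back the $\epsilon'$-thin set $B_i \subset \C^2$ under the holonomy map $\Phi_{\gamma_1,\gamma_2}\colon \R^{2D}\to\R^4$, and apply a union/Markov bound. The paper instead works in a single Delaunay chart (finitely many up to the mapping class group) and invokes the \emph{efficiency} of Delaunay triangulations from \cite[\S 2]{AthreyaCheungMasur19}: any saddle connection of length $\le L$ on an $\widehat\epsilon$-thick surface has holonomy $\sum_{i=1}^D n_i z_i$ with integer $n_i = O(1/\widehat\epsilon)$. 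Each of the four defining conditions then becomes a linear (or bilinear) inequality in the period coordinates $(z_1,\dots,z_D)$ with explicitly bounded coefficients, and the measure is bounded per integer tuple and summed over the $O(\widehat\epsilon^{-D})$ or $O(\widehat\epsilon^{-2D})$ admissible tuples. No covering by small balls is needed.

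There is a genuine gap in your per-pair measure estimate, and it is not just the degeneracy you flag. You assert that $\mu\{\omega\in U_\alpha : \Phi_{\gamma_1,\gamma_2}(\omega)\in B_i\} = O(\mathrm{Leb}_{\C^2}(B_i)) = O(\epsilon')$ because ``holonomies of bounded-length saddle connections are related to basis periods by integer matrices of uniformly bounded norm.'' That last claim is false, and fixing it is the heart of the lemma: on an $\widehat\epsilon$-thick surface, the integer entries relating a bounded-length saddle connection to a Delaunay (or any short) period basis can be as large as $O(1/\widehat\epsilon)$, not $O(1)$ --- this is exactly what the paper's efficiency lemma controls, and it is precisely the source of the $\widehat\epsilon^{-(2+2D)}$ on the right-hand side. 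Separately, $\mathrm{Leb}_{\C^2}(B_i)$ is a $4$-dimensional volume while the set in question lives in $\R^{2D}$; the honest estimate involves the transverse volume of $U_\alpha$ (roughly $\widehat\epsilon^{2D-1}$ for a codimension-one slab in a ball of radius $\widehat\epsilon$) together with the reciprocal of the coefficient norm of the pulled-back linear constraint. These two corrections work against each other, and if carried out properly your route would in fact close (and likely give a bound with a smaller exponent than $2+2D$), but as written the central step is asserted rather than proved, and the ``main obstacle'' you identify --- degenerate $\Phi$ --- is flagged but not resolved. The paper avoids both issues by never pulling back: it fixes the integer tuples, writes the four conditions directly as constraints on $(y_1,\dots,y_D)$ or $(x_i,y_i)$ in the chart, bounds the Lebesgue measure of each constraint set (a slab or quadric neighborhood), and sums over tuples; degenerate tuples (e.g.\ $n=m$) give measure-zero or empty conditions automatically.
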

\begin{proof}

Following~\cite[\S 4]{MasurSmillie91}, we define a \emph{Delaunay triangulation} of a translation surface $\omega \in \hh$.
Consider the Voronoi decomposition of the translation surface with respect to the flat metric: each zero of $\om$ determines a cell given by points which are closer to it than
to any other zero, and have a unique shortest geodesic connecting
the two. The dual decomposition is the Delaunay decomposition of the surface,
and any triangulation given by a further dissection of this is a \emph{Delaunay triangulation}
of $\om$. 

\paragraph*{\bf A basis of saddle connections} Using the  Delaunay triangulation,  for each $\omega$ we take a basis $\{\gamma_i\}_{i=1}^D$ for $H_1(X,\Sigma, \mathbb Z)$ of saddle connections, where the lengths are all bounded below by $\widehat{\epsilon}$.  
Here $\Sigma$ is the set of zeroes of $\omega$.  Let $z_i=x_i+iy_i = \int_{\gamma_i} \omega$ be the holonomy vectors of the basis.  The assumption of thickness says $|z_i|\geq \widehat\epsilon$.  It follows from~\cite[\S 4]{MasurSmillie91} that the lengths are also bounded above by $\frac{1}{\widehat{\epsilon}}$.
Since the thick part of the stratum is compact we can cover it by finitely many coordinates maps via the map $$\eta \mapsto \left(\int_{\gamma_1} \eta, \ldots \int_{\gamma_D} \eta\right)$$ which gives local coordinates on $\hh$ in a neighborhood of $\omega$.  For the remainder of the proof we will work in a single chart and to simplify notation we will use $(z_1,\ldots, z_D)$ as the coordinate charts. Our measure $\mu$ arises from Lebesgue measure in these period coordinates. 

\paragraph*{\bf Efficiency} There are a finite number of possible Delaunay triangulations (up to the action of the mapping class group, see, for example, Harer~\cite{Harer}), so it is enough to bound the measure  for each Delaunay triangulation. In \cite[\S 2]{AthreyaCheungMasur19} it was shown that Delaunay triangulations are \emph{efficient}. By this we mean that up to a fixed multiplicative constant, the  length of any saddle connection is bounded below by the length of a homologous path of edges in the triangulation joining the same endpoints. 
It follows that since $|z|,|w|\leq L$, we can write the holonomies $z,w$ in this basis as $\sum_{i=1}^D n_i z_i$ and $\sum_{i=1}^D m_i z_i$ respectively with coefficients $n_i,m_i \in\mathbb Z$ that are  $O\left(\frac{1}{\widehat\epsilon}\right)$, where the implied bound depends on $L$. We wish to compute the Lebesgue measure of the subset of $\mathbb{C}^D$   where we have one of the relations.   

\paragraph*{\bf Close vertical components} For the first where we have a relation  between $|\im z|$ and $|\im w|$ the assumption that the ratio of the imaginary parts of the holonomy  $z,w$ are within $\epsilon'$ of each other gives  for some $m_i,n_i,z_i$  
\begin{equation*} 
\left|\frac{\sum_{i=1}^D n_iy_i}{\sum_{i=1}^D m_i y_i}-1\right|<\epsilon'
\end{equation*}
which by the bound on $m_i,n_i$ and $y_i$ implies 
\begin{equation}\label{eq:hypplane(1)}
\left |\sum_{i=1}^D(n_i-m_i)y_i \right|=O\left(\frac{\epsilon'}{\widehat\epsilon^2}\right).
\end{equation} For each fixed $D$-tuple of integers $n_i-m_i$ the set of $(z_1,\ldots, z_D)$ which satisfy Equation~\ref{eq:hypplane(1)} forms a neighborhood of width $O(\frac{\epsilon'}{\widehat\epsilon^{2}})$ around the $\R^{2D-1}$ hyperplane formed by the  linear condition on the imaginary parts of $z_i = x_i + i y_i$. So the Lebesgue measure in $\C^D$ within this fixed chart is $O(\frac{\epsilon'}{\widehat\epsilon^{2}})$. Since $|n_i-m_i|= O(\frac{1}{\widehat\epsilon})$ there are  $O(\frac{1}{\widehat{\epsilon}^D})$ integer
tuples when taking the differences and so the total measure  is $O(\frac{\epsilon'}{\widehat\epsilon^{2+D}})$. 
 This gives a bound for the measure of the set of $\omega$ satisfying the first statement in the definition of $\Omega(\widehat{\epsilon}, \epsilon', L,L')$. 
 
\paragraph*{\bf Virtual area close to $A$} For the second we have $$\left|\left(\sum_{i=1}^D n_ix_i\right)\left(\sum_{i=1}^D m_iy_i\right)-\left(\sum_{i=1}^D m_ix_i\right)\left(\sum_{i=1}^D n_iy_i\right)-A\right|<\epsilon'.$$
 For fixed $D$ tuples  $(n_1,\ldots, n_D)$ and $(m_1,\ldots, m_D)$ the set of possible $z_i = x_i+ i y_i$ lives in the $\epsilon'$ neighborhood of a hyperplane determined by the determinant condition, and thus for fixed $A$ has Lebesgue measure $O(\epsilon')$. In this case there are $O(\frac{1}{\widehat{\epsilon}^{2D}})$ such tuples and so the result follows.
 
\paragraph*{\bf Cases 3 and 4} The proof of the measure bound  for $\omega$ satisfying the third and fourth  statements are similar.  Write $z=\sum n_iz_i$. The third assumption is $$\left|\sum_{i=1}^D n_iy_i- L'\right|<\epsilon'.$$  The measure of the set of $z$ that satisfy this inequality for some $n_i$ is  $O(\frac{\epsilon'}{\widehat\epsilon^{D}})$, which implies the desired bound since $\widehat \epsilon<1$. The proof of the last is similar.
  
\end{proof}

\subsection{Proof of Proposition~\ref{prop:Mjbounds}} \label{sec:finalproof}
In this section we will apply Lemma~\ref{lem:bad} and Corollary~\ref{cor:thin} to prove Proposition~\ref{prop:Mjbounds}.

\begin{proof}

We begin by proving the estimate (\ref{eq:mainbound})
For $(z,w) \in M_t$, we have, by Lemma~\ref{lem:equality}:
	\begin{align*} \left(\one_{D_A(e^t/2, e^t)} -(A_th_A)\right)(z,w) &= 1 - \pi e^{2t} \cdot \frac{\arctan(e^{-2t})}{\pi} \\ &= 1 - 1 + e^{2t} \cdot O(e^{-6t}) \\ &= O(e^{-4t})\\ \end{align*}
 Combining this with Proposition~\ref{prop:quadratic}, we have \begin{align}\label{eq:M0bound} |m_t(\om)| &\le O(e^{-4t})  \cdot N_A^*(\omega, e^t) \\ \nonumber &= O(e^{-4t})O(e^{2t}) \\ \nonumber &= O(e^{-2t}) = o(e^{2t}). \\ \nonumber \end{align}

\paragraph*{\bf Estimating error terms} We now begin to estimate the error terms.
Let $\epsilon'' >0$. We will get bounds of the form $O(\epsilon'' e^{2t})$. Since $\epsilon''$ is arbitrary this will lead to the desired bound $o(e^{2t})$. Choose $\widehat{\epsilon}$ so that $\epsilon'' = \widehat{\epsilon}^{\frac{1}{2N} - 2\delta}.$ Note that since $\delta < \frac{1}{4N}$, $\frac{1}{2N} - 2\delta>0$. Let $L = \sqrt{8}\pi\widehat{c}_A$, and choose  $\epsilon'$ small enough so that 
$$\frac{\epsilon'}{\widehat{\epsilon}^{2+2D}} \left(\frac{L}{\widehat{\epsilon}}\right)^{1+\delta} < \epsilon''.$$ Choose $T_0$ large enough so that whenever $t\geq T_0$, Corollary~\ref{cor:thin} holds. Corollary~\ref{cor:thin} shows that  the number of pairs (without restrictions to be in $E_t^j$) such that after rotating and flowing land us  in the $\widehat\epsilon$-thin part (where the shortest curve has length at most $\sigma^j<\widehat{\epsilon}$) is $O(\epsilon'' e^{2t})$. Since we have the desired bounds in the thin part, we now only need to consider the thick part of the stratum, so we suppose $|g_tr_{\theta_i}z_i| > \widehat{\epsilon}.$

\subsection*{Bounds on $z$ in trapezoid}\label{sec:trapezoid}
Suppose $\theta \in I(\theta_i)$ satisfies $r_{\theta}z \in g_{-t} \mathcal{T}$, where we first choose $t$ large enough so $$\tan(\pi e^{-2t}) \leq 6\pi e^{-2t}.$$ Notice that $\theta_z$, the angle which makes $r_{\theta_z}$ vertical has $r_{\theta_z}z, r_{\theta}z \in g_{-t}\mathcal{T}$, so $|\theta - \theta_z| \leq e^{-2t}$. Then for any $\theta' \in I(\theta_i)$, since $|\theta - \theta'| \leq 2\pi e^{-2t}$, we have 
\begin{equation}\label{eq:thetabounds}
|\theta' - \theta_z| \leq 2\pi e^{-2t} + e^{-2t} \leq 3\pi e^{-2t}.
\end{equation}
Notice $\im(r_{\theta'} z) \leq |r_{\theta_z}z|$, so we now want a lower bound for $\im(r_{\theta'}z)$. Indeed since 
$$|\re(r_{\theta'}z)| = \im(r_{\theta'z}) \tan(|\theta_z - \theta'|) \leq 
\im(r_{\theta'}z) \tan(3\pi e^{-2t}) \leq\im(r_{\theta'}z) 6\pi e^{-2t},$$
this implies
$$\im(r_{\theta'}z)^2(1 +36\pi^2 e^{-4t}) \geq\im(r_{\theta'}z)^2 + \re(r_{\theta'}z)^2 \geq |z|^2 = |\im(r_{\theta_z}z)|^2.$$
Thus for any $\theta' \in I(\theta_i)$, 
\begin{equation}
	\label{eq:zbounds}
	\frac{|\im(r_{\theta_z}z)|}{\sqrt{1+36\pi^2e^{-4t}}} \leq \im(r_{\theta'}z)  \leq |\im(r_{\theta_z}z)|
\end{equation}
\subsection*{Bounds on $w$}
We claim the imaginary parts of $r_{\theta_z}w$ and $r_{\theta'}w$ don't differ too much. To do this we will use polar coordinates, so set 
$$r_{\theta_z} w = |w|e^{i\psi_z}\quad\text{and} \quad r_{\theta'}w = |w| e^{i\psi'}.$$
Recall by Equation~\eqref{eq:thetabounds}, $|\psi'-\psi_z| = |\theta_z-\theta'| \leq 3\pi e^{-2t}$. Then since circumference $|\psi'-\psi_z|$ is bigger than the length of a chord on the unit circle,
$$3\pi e^{-2t} \geq |\theta' - \theta_z| = |\psi' - \psi_z| \geq |e^{i\psi'} - e^{i\psi_z}| \geq |\sin(\psi')-\sin(\psi_z)|.$$
Dividing by $\sin(\psi_z)$, we have
\begin{equation}\label{eq:wpsi}\left|\frac{\sin(\psi')}{\sin(\psi_z)} -1\right| \leq \frac{3\pi e^{-2t}}{|\sin(\psi_z)|}.\end{equation}

\subsection*{Error term $E_t^1$}\label{sec:errorproof1} 

Choose $T_1\geq T_0$ large enough so that for $t\geq T_1$, 
$$\max\left\{\left(\frac{1}{2} - \frac{1}{2\sqrt{1+36\pi^2 e^{-4t}}}\right), \left(\frac{\sqrt{1+e^{-4t}}}{2}- \frac{1}{2}\right) \right\} \leq \epsilon'.$$
Combining Equation~\eqref{eq:zbounds} with the fact that
$$\frac{e^t}{2} \leq \im(r_{\theta_z} z) = |z| \leq \frac{e^t}{2} \sqrt{1+e^{-4t}},$$
$$\frac{1}{2\sqrt{1+ 36\pi^2 e^{-4t}}} \leq \im(g_tr_{\theta'}z) \leq \frac{\sqrt{1+e^{-4t}}}{2}.$$
By our choice of $T_1$, 
$\frac{1}{2}-\epsilon' \leq \im(g_tr_{\theta'} z) \leq \frac{1}{2} + \epsilon'.$
Thus the resulting surface by Lemma~\ref{lem:expand} satisfies
$$g_tr_{\theta'}(X,\omega) \in \Omega\left(\frac{\widehat{\epsilon}}{\sqrt{8}\pi}, \epsilon', L, \frac{1}{2}\right).$$

\subsection*{Error term $E_t^2$.}\label{sec:errorproof2}

Choose $T_2\geq T_0$ large enough so that for $t\geq T_2$, 
$$\max\left.\begin{cases} \sqrt{1+36\pi^2e^{-4t}} \left(1+3\pi e^{-2t}\left({\frac{\sqrt{1+e^{-4t}}}{1 - 4Ae^{-2t} }}\right)\right)-1,\\
1-\left(1-3\pi e^{-2t}\left({\frac{\sqrt{1+e^{-4t}}}{1 - 4Ae^{-2t} }}\right)\right) \frac{1 - 4Ae^{-2t} }{\sqrt{1+e^{-4t}}}\end{cases}\right\} \leq \epsilon'.$$
Since $|z| = \im(r_{\theta_z}z)$, 
$$|\im(r_{\theta_z}z)| \frac{1}{\sqrt{1+e^{-4t}}} \leq |w| \leq |\im(r_{\theta_z}z)|.$$
The determinant condition guarantees small angle. That is if $\psi$ is the angle of $r_{\theta_z} w$ from the vertical, then
$$\frac{|\re(r_{\theta_z} w)|}{|w|} = |\sin(\psi)| \leq \frac{A}{|w|\cdot |z|} \leq \frac{A{\sqrt{1+e^{-4t}}}}{|z|^2} \leq \frac{4A e^{-2t}}{\sqrt{1+e^{-4t}}}.$$
 Thus
 $$\frac{|\im(r_{\theta_z}z)|}{{\sqrt{1+e^{-4t}}}}  \leq |\re(r_{\theta_z} w)| + |\im(r_{\theta_z} w)| \leq |\im(r_{\theta_z} w)| + \frac{4A e^{-2t}}{\sqrt{1+e^{-4t}}} |\im(r_{\theta_z} z)|,$$
 which yields
 \begin{equation}\label{eq:M2upper}
 \frac{|\im(r_{\theta_z}w)|}{\im(r_{\theta_z} z)} \geq \frac{1}{{\sqrt{1+e^{-4t}}}} - \frac{4Ae^{-2t}} {\sqrt{1+e^{-4t}}} = \frac{1 - 4Ae^{-2t} }{\sqrt{1+e^{-4t}}}.
 \end{equation}
 In the other direction we have the easier inequality that 
 \begin{equation}
 \label{eq:M2lower}\frac{|\im(r_{\theta_z}w)|}{\im(r_{\theta_z}z)} \leq \frac{|w|}{|z|} \leq 1.
 \end{equation}
Thus by our assumption on $|w|$, Equation~\eqref{eq:wpsi} and Equation~\eqref{eq:M2upper},
\begin{equation} \label{eq:wbounds}\left|\frac{\im(r_{\theta'}w)}{\im(r_{\theta_z} w)} -1\right| \leq 3\pi e^{-2t}\frac{|w|}{|\im(r_{\theta_z}w)|} \leq 3\pi e^{-2t}\frac{\im(r_{\theta_z}z)}{|\im(r_{\theta_z}w)|} \leq 3\pi e^{-2t}\left({\frac{\sqrt{1+e^{-4t}}}{1 - 4Ae^{-2t} }}\right).
\end{equation}
In the first direction we combine \eqref{eq:wbounds}, \eqref{eq:zbounds}, and \eqref{eq:M2lower} to obtain
\begin{align*}\frac{|\im(r_{\theta'}w)|}{\im(r_{\theta'}z)} \leq  \sqrt{1+36\pi^2e^{-4t}} \left(1+3\pi e^{-2t}\left({\frac{\sqrt{1+e^{-4t}}}{1 - 4Ae^{-2t} }}\right)\right) \leq 1+\epsilon'.
\end{align*}
In the other direction, combine \eqref{eq:wbounds}, \eqref{eq:zbounds}, and \eqref{eq:M2upper} to obtain
\begin{align*}
	\frac{|\im(r_{\theta'}w)|}{\im(r_{\theta'}z)} &\geq \left(1-3\pi e^{-2t}\left({\frac{\sqrt{1+e^{-4t}}}{1 - 4Ae^{-2t} }}\right)\right) \frac{1 - 4Ae^{-2t} }{\sqrt{1+e^{-4t}}}\geq 1-\epsilon'.
\end{align*}
Thus by Lemma~\ref{lem:expand}, and noting that flowing by $g_t$ does not change the ratio of  imaginary parts,
$$g_{t}r_{\theta'}\omega \in \Omega\left(\frac{\widehat{\epsilon}}{\sqrt{8}\pi}, \epsilon', L, 1\right).$$

\subsection*{Error term $E_t^3$}\label{sec:errorproof3}

Choose $T_3\geq T_0$ large enough so that for $t\geq T_3$,
 
$$\max\left\{\left(1 - \frac{1}{\sqrt{1+36\pi^2 e^{-4t}}}\right), \left(\sqrt{1+e^{-4t}}- 1\right) \right\} \leq \epsilon'.$$
For any $\theta' \in I(\theta_i)$, combining \ref{eq:zbounds} with the fact that in this case $$e^t \leq \im(r_{\theta_z} z)= |z| \leq e^t \sqrt{1+e^{-4t}},$$
we have
$$\frac{1}{\sqrt{1+ 36\pi^2 e^{-4t}}} \leq \im(g_tr_{\theta'}z) \leq \sqrt{1+e^{-4t}}.$$
By our choice of $T_3$, 
$1-\epsilon' \leq \im(g_tr_{\theta'} z) \leq 1+ \epsilon'.$
Thus the resulting surface by Lemma~\ref{lem:expand} satisfies
$$g_tr_{\theta'}\omega \in \Omega\left(\frac{\widehat{\epsilon}}{\sqrt{8}\pi}, \epsilon', L,1\right).$$

\subsection*{Error term $E_t^4$}\label{sec:errorproof4}

Choose $T_4\geq T_0$ large enough so that for $t\geq T_4$, 

$$\max\left.\begin{cases} \sqrt{1+\rho_Ae^{-4t}} \sqrt{1+36\pi^2e^{-4t}} \left(1+3\pi e^{-2t}\left(\frac{\sqrt{1+\rho_Ae^{-4t}}}{1-  4Ae^{-2t}\sqrt{1+\rho_Ae^{-4t}}} \right)\right)-1,\\
1- (1 -4Ae^{-2t}\sqrt{1+\rho_A e^{-4t}}) \left(1-3\pi e^{-2t}\left(\frac{\sqrt{1+\rho_Ae^{-4t}}}{1-  4Ae^{-2t}\sqrt{1+\rho_Ae^{-4t}}} \right)\right) \end{cases}\right\} \leq \epsilon'.$$
Since $|z| = \im(r_{\theta_z}z)$, 
$$|\im(r_{\theta_z}z)| \leq |w| \leq |\im(r_{\theta_z}z)| \sqrt{1+\rho_A e^{-4t}}.$$
The determinant condition guarantees small angle. That is if $\psi$ is the angle of $r_{\theta_z} w$ from the vertical, then
$$\frac{|\re(r_{\theta_z} w)|}{|w|} = |\sin(\psi)| \leq \frac{A}{|w|\cdot |z|} \leq \frac{A}{|z|^2} \leq 4A e^{-2t}.$$
 Thus
 $$|\im(r_{\theta_z}z)| \leq |w|\leq |\re(r_{\theta_z} w)| + |\im(r_{\theta_z} w)| \leq |\im(r_{\theta_z} w)| +4A e^{-2t} |\im(r_{\theta_z} z)| \sqrt{1+\rho_A e^{-4t}},$$
 which yields
 \begin{equation}\label{eq:M4upper}
 \frac{|\im(r_{\theta_z}w)|}{\im(r_{\theta_z} z)} \geq 1 -4Ae^{-2t}\sqrt{1+\rho_A e^{-4t}}.
 \end{equation}
 In the other direction we have the easier inequality that 
 \begin{equation}
 \label{eq:M4lower}\frac{|\im(r_{\theta_z}w)|}{\im(r_{\theta_z}z)} \leq \frac{|w|}{|z|} \leq \sqrt{1+\rho_Ae^{-4t}}.
 \end{equation}

\noindent Thus by our assumption on $|w|$, Equation~\eqref{eq:wpsi} and Equation~\eqref{eq:M4upper},
\begin{align}\label{eq:M4wbounds}\left|\frac{\im(r_{\theta'}w)}{\im(r_{\theta_z} w)} -1\right| \leq 3\pi e^{-2t}\frac{|w|}{|\im(r_{\theta_z}w)|} &\leq 3\pi e^{-2t}\frac{\im(r_{\theta_z}z)}{|\im(r_{\theta_z}w)|}\sqrt{1+\rho_Ae^{-4t}} \\ \nonumber &\leq 3\pi e^{-2t}\left(\frac{\sqrt{1+\rho_Ae^{-4t}}}{1 -4Ae^{-2t}\sqrt{1+\rho_A e^{-4t}}} \right).
\end{align}
Now in the first direction we combine Equations~\eqref{eq:zbounds},\eqref{eq:M4wbounds} and \eqref{eq:M4lower} to obtain
\begin{align*}\frac{|\im(r_{\theta'}w)|}{\im(r_{\theta'}z)} 
\leq\sqrt{1+\rho_Ae^{-4t}} \sqrt{1+36\pi^2e^{-4t}} \left(1+3\pi e^{-2t}\left(\frac{\sqrt{1+\rho_Ae^{-4t}}}{1-  4Ae^{-2t}\sqrt{1+\rho_Ae^{-4t}}} \right)\right) \leq 1+\epsilon'.
\end{align*}
In the other direction, combine Equations~\eqref{eq:M4wbounds}\eqref{eq:zbounds}, and \eqref{eq:M4upper} to obtain
\begin{align*}
	\frac{|\im(r_{\theta'}w)|}{\im(r_{\theta'}z)} &\geq (1 -4Ae^{-2t}\sqrt{1+\rho_A e^{-4t}}) \left(1-3\pi e^{-2t}\left(\frac{\sqrt{1+\rho_Ae^{-4t}}}{1-  4Ae^{-2t}\sqrt{1+\rho_Ae^{-4t}}} \right)\right) \geq 1-\epsilon'.
\end{align*}
Thus by Lemma~\ref{lem:expand}, and noting that flowing by $g_t$ does not change the ratio of imaginary parts,
$$g_{t}r_{\theta'}\omega \in \Omega\left(\frac{\widehat{\epsilon}}{\sqrt{8}\pi}, \epsilon', L,L'\right).$$

\subsection*{Combining cases}\label{sec:proofcombine}
	We conclude by Lemma~\ref{lem:bad}, that  
$$\mu\left( \Omega\left(\frac{\widehat{\epsilon}}{\sqrt{8}\pi}, \epsilon', L,L'\right)\right) = O\left(\frac{\epsilon'}{\widehat{\epsilon}^{2+2D}}\right).$$ Using the relative homology coordinates given by the  Delaunay triangulation we identify $\Omega\left(\frac{\widehat{\epsilon}}{\sqrt{8}\pi}, \epsilon', L,L'\right)$ with a domain in $\mathbb{C}^D$. Notice that all coordinates have length at least $\frac{\widehat{\epsilon}}{\sqrt{8}\pi}$. Let $h$  be the characteristic function of the compact set $\Omega\left(\frac{\widehat{\epsilon}}{\sqrt{8}\pi}, \epsilon', L,L'\right)$. Choose a small radially symmetric neighborhood $U$ of $\Omega\left(\frac{\widehat{\epsilon}}{\sqrt{8}\pi}, \epsilon', L,L'\right)$ such that all coordinates have absolute value at least    $\frac{\widehat{\epsilon}}{\sqrt{16}\pi}$  and choose  a continuous  $g\in C_0^\infty(\mathbb{C}^D)$ supported in $U$ such that  $h\leq g$, and 
$$\int_{\hh} g d\mu=O\left(\frac{\epsilon'}{\widehat{\epsilon}^{{2+2D}}}\right).$$   Let $H$  be a product of $D$ annuli  $A(r_0,r_1)$ that contains $\Omega$  with $r_0>0$. 

\paragraph*{\bf A family of functions} We  consider the  family of functions  $\mathcal{F}$ as in Lemma~\ref{lem:separate} in this case with $N=D$  and $H$ as above.  Let $\phi$ be a radially symmetric continuous function of compact support which is identically $1$ on  $\Omega\left(\frac{\widehat{\epsilon}}{\sqrt{8}\pi}, \epsilon', L,L'\right)$.  and consider the family $\bar{\mathcal{F}}$=$\{\phi f:f\in\mathcal{F}\}$.  

This is a $K$-finite compactly supported family and so there is $\bar f\in\bar{\mathcal{F}}$ uniformly close to $g$. Since we only need upper bounds, we can choose $g$ larger if necessary so that we can  choose a smoothing function $\eta$ with $$\int_{-\infty}^\infty \eta(u)du=1$$ and support close enough to $0$ so that $ h\leq (\eta*\bar f)$.  By Theorem~\ref{thm:Nevo}, 
$$\lim_{t\to\infty}\int_{-\infty}^\infty \eta(t-s) (A_s \bar f)(\om)ds = \int_{\hh} \bar f d\mu = O\left(\frac{\epsilon'}{\widehat{\epsilon}^{2+2D}}\right).$$
which  for $t$ large enough gives $$A_t h(\om)=O\left(\frac{\epsilon'}{\widehat{\epsilon}^{2+2D}}\right).$$
Then as in Equation~\eqref{int:bound}, by Lemma~\ref{lem:expand}, for $t$ sufficiently large
$$\#\left\{I(\theta_i): \exists \theta \in I(\theta_i) \text{ so that } r_{\theta}z \in g_{-t}\mathcal{T} \text{ for } z\in E_t^k\right\} = O\left(\frac{\epsilon'}{\widehat{\epsilon}^{2+2D} }e^{2t}\right).$$
Moreover by Lemma~\ref{lem:EM} for each $I(\theta_i)$ the number of possible $z \in E_t^k$ is $O\left(\left[\frac{L}{\widehat{\epsilon}}\right]^{1+\delta}\right).$
Thus by our choice of $\epsilon'$, as desired
$$|E_t^k| = O\left(\frac{\epsilon'}{\widehat{\epsilon}^{2+2D} }e^{2t} \left[\frac{L}{\widehat{\epsilon}}\right]^{1+\delta}\right) = O(\epsilon'' e^{2t}).$$
\noindent Finally we remark that since $\epsilon''$ is arbitrary, the above estimate is $o(e^{2t})$. 
\end{proof}

\subsection{Proof of Lemma~\ref{lem:smallboundary}}\label{sec:integralbounds}
In order to prove Lemma~\ref{lem:smallboundary}, we first prove an integral bound for a function supported in the thin part of the stratum. For $\omega\in \mathcal{H}_\epsilon$ let $$\psi_{L_0, \epsilon}(\omega) = \chi_{B(0, L_0)^2}^{\SV}(\omega) \chi_{\hh_{\epsilon}}(\omega).$$ 
$\psi_{L_0, \epsilon}(\omega)$ counts the number of pairs of saddle connections of length at most $L_0$ if $\omega \in \hh_{\epsilon}$, and if $\omega\notin \mathcal{H}_\epsilon$, $\psi_{L_0, \epsilon}(\omega)=0$.  For ease of notation we write $\psi = \psi_{L_0, \epsilon}$.
\begin{lemma}
\label{lem:integralthin}
$$\int_{\mathcal{H}} \psi d\mu  =O(\epsilon^{\frac{1}{N}-2\delta}),$$ where the implied bound depends on $L_0$. 
\end{lemma}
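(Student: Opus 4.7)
The plan is to control $\int_\mathcal{H} \psi\, d\mu$ by summing estimates over the thin-part decomposition $\mathcal{H}_\epsilon \subset \bigcup_{j\geq j_0} F(j)\cup G(j)\cup H(j)$ introduced in Section~\ref{sec:proof4.2}, where $\sigma^{j_0+1}\leq\epsilon<\sigma^{j_0}$. On each piece I will bound the measure with Lemma~\ref{lem:MS} (together with the cylinder estimate used in Case~III of the proof of Proposition~\ref{prop:quadratic}) and bound $N(\omega,L_0)^2$ pointwise with Lemma~\ref{lem:EM} or Lemma~\ref{lem:ACM}, then sum the resulting geometric series in $j$.

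For $\omega\in F(j)$ there is a saddle connection of length in $(\sigma^{j+1},\sigma^j]$ and a non-homologous saddle connection of length at most $\sigma^{j/(4N)}$, so Lemma~\ref{lem:MS} gives $\mu(F(j))=O(\sigma^{2j+j/(2N)})$, while applying Lemma~\ref{lem:EM} with $L=L_0$ and $\ell(\omega)\geq\sigma^{j+1}$ yields the pointwise estimate $N(\omega,L_0)^2=O(\sigma^{-2j(1+\delta)})$. Multiplying gives $\int_{F(j)}\psi\, d\mu = O(\sigma^{j(1/(2N)-2\delta)})$, and summing over $j\geq j_0$ gives the claimed order in $\epsilon$ since $\delta<\tfrac{1}{4N}$. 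For $\omega\in H(j)$, the hypothesis that $\gamma_0(\omega)$ bounds a cylinder of width at most $\epsilon_0$ produces the sharper measure bound $\mu(H(j))=O(\sigma^{3j})$, and Lemma~\ref{lem:EM} again supplies $N(\omega,L_0)^2=O(\sigma^{-2j(1+\delta)})$; the resulting series in $j$ is comfortably summable, yielding a contribution of smaller order.

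The $G(j)$ case is the delicate one. There the shortest saddle connection does not bound a narrow cylinder and $\tilde\ell(\omega)\geq\sigma^{j/(4N)}$, so Lemma~\ref{lem:ACM} applies with $\gamma'=\gamma_1(\omega)$ to give $N(\omega,\epsilon_0)^2=O(\tilde\ell(\omega)^{-2N})=O(\sigma^{-j/2})$. To promote this to a bound on $N(\omega,L_0)^2$ without losing the ACM gain, I will further decompose $G(j)$ dyadically by the value of $\tilde\ell(\omega)$, setting $G(j,k)=\{\omega\in G(j):\tilde\ell(\omega)\in(\sigma^{k+1},\sigma^k]\}$ with $k<j/(4N)$. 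On $G(j,k)$, Lemma~\ref{lem:MS} sharpens the measure estimate to $\mu(G(j,k))=O(\sigma^{2j+2k})$ and Lemma~\ref{lem:ACM} sharpens the count to $N(\omega,\epsilon_0)^2=O(\sigma^{-2kN})$; the remaining saddle connections of length in $[\epsilon_0,L_0]$ are absorbed using Lemma~\ref{lem:EM} at the scale of the systole. Summing first in $k$ and then in $j\geq j_0$, the total $G(j)$ contribution should absorb into the same order in $\epsilon$ as the $F(j)$ terms.

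The main obstacle will be in handling $G(j)$: naively applying Lemma~\ref{lem:EM} to $N(\omega,L_0)$ on $G(j)$ washes out the ACM improvement and produces a series that is not summable with the advertised exponent. Getting the bookkeeping right so that the inner sum over $k$ and the outer sum over $j\geq j_0$ combine to a geometric series with exponent $\tfrac{1}{N}-2\delta$ in $\sigma^{j_0}\asymp\epsilon$, uniformly in the implied constant depending on $L_0$ and $\delta$, is where the bulk of the technical care will go.
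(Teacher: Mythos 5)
Your decomposition of the thin part into $F(j)$, $G(j)$, $H(j)$, your use of Lemma~\ref{lem:MS} for measure bounds and Lemmas~\ref{lem:EM} and~\ref{lem:ACM} for pointwise counting bounds, and the geometric-series summation over $j \geq j_0$ all match the paper's proof; your treatment of $F(j)$ and $H(j)$ is exactly the paper's. Where you depart is $G(j)$: the paper does \emph{not} refine into the dyadic blocks $G(j,k)$. It uses the uniform lower bound $\tilde\ell(\omega) \geq \sigma^{j/(4N)}$ built into the very definition of $G(j)$, applies Lemma~\ref{lem:ACM} once to obtain $\psi(\omega) = O(\sigma^{-j})$ on all of $G(j)$, and pairs this with the coarse systole-only bound $\mu(G(j)) = O(\sigma^{2j})$ to get $\int_{G(j)}\psi\,d\mu = O(\sigma^j)$, which already sums to the right order. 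Your $G(j,k)$ refinement is unnecessary overhead, and, more importantly, it does not resolve the obstacle you flag as the crux: Lemma~\ref{lem:ACM} bounds $N(\omega,\epsilon_0)$, not $N(\omega,L_0)$, and the dyadic splitting only sharpens the count of saddle connections of length at most $\epsilon_0$ while saying nothing about those of length in $(\epsilon_0, L_0]$. The paper's proof applies Lemma~\ref{lem:ACM} directly to bound $\psi(\omega)$, which is legitimate when $L_0 \leq \epsilon_0$, and that is the regime in which the lemma is later invoked (inside the proof of Lemma~\ref{lem:smallboundary}). If you want the statement for genuinely arbitrary $L_0$, you need a separate argument for lengths in $(\epsilon_0, L_0]$; the $G(j,k)$ splitting does not supply one, so as written your $G(j)$ case would still carry the gap you identified.
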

\begin{proof}

Again by Lemma~\ref{lem:MS},
$$\mu\left(F\left(j\right)\right) = O\left(\sigma^{2j + 2\frac{j}{2N}}\right) = O\left(\sigma^{2j} \sigma^{\frac{j}{N}}\right).$$

\noindent Applying Lemma~\ref{lem:EM},  which says for each $\omega\in F(j)$ we have $\psi(\omega)=O(\sigma^{-j(2+2\delta)})$, gives   
$$\int_{F(j)} \psi d\mu =O(\sigma^{j(\frac{1}{N}-2\delta)}).$$

\noindent By Lemma~\ref{lem:ACM}, for each $\omega\in  G(j)$ we have $\psi(\omega)=O\left(\left(\sigma^{j/2N}\right)^{-2N}\right)=O(\sigma^{-j})$.  We also have by Lemma~\ref{lem:MS} that $\mu (G(j))=O(\sigma^{2j})$ giving 
$$\int_{G(j)} \psi d\mu=O(\sigma^j).$$ 

\noindent Then again if $\omega \in H(j)$ $\psi(\om) =O(\sigma^{-j(2+2\delta)})$ and now 
$$\mu(H(j)) = O(\sigma^{3j}),$$
where the implied constant depends on $\epsilon_0$.
This gives $$\int_{H(j)} \psi d\mu =O(\sigma^{j(1-2\delta)}).$$ \noindent and so 
$$\int_{\mathcal{H}_\epsilon} \psi d\mu =O\left (\sum_{j\geq j_0} \sigma^j+\sigma^{j(1/N-2\delta)}+\sigma^{j(1-2\delta)}\right )=O\left(\sigma^{j_0(1/N-2\delta)}\right)=O\left(\epsilon^{1/N-2\delta}\right).$$

\end{proof}

\begin{proof}[Proof of Lemma~\ref{lem:smallboundary}]

For any $\epsilon'$, consider the set $$U \subset \{(z,w) \in \C^2: 1/4 \leq  \im(z) \leq 2, |\re(z)| \leq \im(z)+1, \mbox{ and }|w| \leq 3\sqrt{1 + (8A + 16 A^2)}\}$$ where at least one of the following conditions holds:
\begin{enumerate}
\item  Area close to $A$:
$$A(1-\epsilon') \leq |z\wedge w|\leq (1+\epsilon')A$$
\item Ratio of imaginary parts close: $$(1-\epsilon')\im z
\leq |\im w|\leq (1+\epsilon')\im z$$
\item Real part and imaginary part of $z$ close: $$(1-\epsilon')\im z\leq |\re z|\leq (1+\epsilon')\im z$$
\item Imaginary part of $z$ close to $1$: $$(1-\epsilon') \leq |\im(z)| \leq 1+\epsilon'$$ 
\item  Imaginary part of $z$ close to $1/2$: $$(\frac{1}{2}-\epsilon') \leq |\im(z)| \leq \frac{1}{2}+\epsilon'.$$ 
\end{enumerate}

\paragraph*{\bf A neighborhood of $\partial R_A(\mathcal T)$} The set $U$ describes a neighborhood of   $\partial R_A(\mathcal{T} )$. It can be chosen small enough as not to contain any points of the form $(z,0)$ or $(0,w)$. We can then find an $r_0>0$  and enlarge $U$ so that is  of the form $A(r_0,r_1)\times A(r_0,r_1)$ for $0<r_0<r_1$. This is possible as the boundary of the trapezoid is bounded uniformly away from zero, and  $\partial\mathcal{R}_A(z)$ is  bounded away from $0$ by some fixed distance $r(z)>0$, and since $z$ varies in a compact set, $$r_0 = \min_{z\in \mathcal{T}} r(z) > 0.$$

\noindent Choose  $g=g_{\epsilon}$ continuous such that 
\begin{enumerate}

\item $g(z,w)\leq 1$ and is supported in $R_A(\mathcal{T})\cup U$
\item $g(z,w)=1$ for  $(z,w)\in R_A(\mathcal{T})$
\end{enumerate}

\paragraph*{\bf A bounding function} Let $\phi=\phi_\epsilon'$ be continuous, supported on $U$, such that $\phi\leq 1$,  and $\phi=g$ on $U\setminus R_A(\mathcal{T})$.  (The point is that $\phi=1$ on $\partial R_A(\mathcal{T})$). 
Then $$(g-h_A) \leq \phi.$$

\paragraph*{\bf Radial symmetry} Since $U$ is radially symmetric, we can apply
 Proposition~\ref{prop:convergescont}. We conclude
$$\lim_{\tau\to\infty}(A_\tau \widehat\phi)(\om)ds = \int_{\hh} \widehat\phi d\mu=\int_{\hh_{\widehat\epsilon}}\widehat\phi d\mu+\int_{\hh\setminus\hh_{\widehat\epsilon}}\widehat\phi d\mu,$$ where $\hh_{\widehat\epsilon}$ is the $\widehat\epsilon$-thin part.  
By Lemma~\ref{lem:integralthin} the first term on the right is $$O\left(\widehat \epsilon^{\frac{1}{N}-2\delta}\right).$$ By Lemma~\ref{lem:EM}, each $\omega$ in the  $\widehat\epsilon$ thick part has $O\left(\frac{1}{\widehat\epsilon^{2+2\delta}}\right)$ pairs of saddle connections of bounded length.  This together with   Lemma~\ref{lem:bad} says the second term is $$O\left(\frac{\epsilon'}{\widehat{\epsilon}^{4+2D+2\delta}}\right).$$ These two inequalities  imply that for  $\tau$ large enough,   $$A_\tau( \widehat g-\widehat h_A)(\om)=O(\widehat\epsilon^{\frac{1}{N}-2\delta})+O\left(\frac{\epsilon'}{\widehat\epsilon^{4+2D+2\delta}}\right).$$

\paragraph*{\bf Conclusion} Recall we fixed $\delta$ so that $\frac{1}{N}>4\delta$. Then given  $\epsilon$,  choose
$\widehat\epsilon$ so the first term is at most $\epsilon/2$.
Then choose $\epsilon'$ so the second term is also at most $\epsilon/2$.
The first conclusion of the Lemma follows.
The second conclusion follows directly from Lemma~\ref{lem:bad} and Lemma~\ref{lem:integralthin}.

\end{proof}

\bibliographystyle{alpha}
\bibliography{Sources}
\end{document}